\documentclass{article}
\usepackage{amsmath,amsthm, amssymb}
\usepackage{graphicx, color}               
\usepackage{tikz,caption}
\usepackage{epstopdf}
\usepackage{mathtools,hyperref,verbatim}
\DeclareGraphicsRule{.tif}{png}{.png}{`convert #1 `dirname #1`/`basename #1 .tif`.png}

\usepackage[numbers]{natbib}
\usepackage[english]{babel}
\bibliographystyle{amsplain}

\textheight=225mm \topmargin=-8mm \textwidth=165mm
\oddsidemargin=0mm \evensidemargin=0mm
\parskip 3mm

\newcommand{\bit}{\begin{itemize}}
\newcommand{\eit}{\end{itemize}}

\newcommand{\ch}{\text{ch}}

\newcounter{ctr}

\newtheorem{thm}[ctr]{Theorem}

\newtheorem{lemma}[ctr]{Lemma}
\newtheorem{conj}[ctr]{Conjecture}

\title{Equitable Choosability of Prism Graphs}
\author{Kirsten Hogenson \thanks{Skidmore College, {khogenso@skidmore.edu}} \and Dan Johnston \thanks{Trinity College, daniel.johnston@trincoll.edu} \and Suzanne O'Hara \thanks{Wesleyan University, seohara@wesleyan.edu}}
\date{}

\begin{document}

\maketitle

\begin{abstract}
    \noindent A graph $G$ is equitably $k$-choosable if, for every $k$-uniform list assignment $L$, $G$ is $L$-colorable and each color appears on at most $\left\lceil |V(G)|/k\right\rceil$ vertices.  Equitable list-coloring was introduced by Kostochka, Pelsmajer, and West in 2003 \cite{KPW}.  They conjectured that a connected graph $G$ with $\Delta(G)\geq 3$ is equitably $\Delta(G)$-choosable, as long as $G$ is not complete or $K_{d,d}$ for odd $d$.  In this paper, we use a discharging argument to prove their conjecture for the infinite family of prism graphs.  \\
    
    \noindent\textbf{Keywords:} list coloring; equitable list coloring; prism graph; reducible configuration; discharging\\
    \noindent\textbf{Mathematics Subject Classification:} 05C15
\end{abstract}


\section{Introduction}
\label{sec:intro}

For terminology and notation not explicitly defined in this paper, see \cite{Diestel}.  Let $G$ be a graph.  We say that $G$ is \textbf{properly $k$-colorable} if there exists a mapping $c:V(G) \to [k]$ such that $c(u) \ne c(v)$ for every edge $uv \in E(G)$.  In some graph coloring scenarios, we impose restrictions on which colors it is acceptable to use on each vertex.  A \textbf{list assignment} $L$ for $G$ provides a list of acceptable colors, $L(v)$, to each vertex $v\in V(G)$.  This list assignment is said to be \textbf{$k$-uniform} if $|L(v)|=k$ for every $v\in V(G)$.  A proper $L$-coloring of $G$ is a proper coloring in which each vertex is assigned a color from its list.  If every $k$-uniform list assignment $L$ of $G$ admits a proper list coloring, we say that $G$ is \textbf{$k$-choosable}.  The choice number, $\ch(G)$, of a graph $G$ is the smallest $k$ for which $G$ is $k$-choosable.  The concept of list coloring was independently introduced by Vizing\cite{vizing} and Erd\H{o}s, Rubin, and Taylor\cite{ERT}, and the latter authors were able to bound the choice number in the following way.
\begin{thm}[Erd\H{o}s, Rubin, and Taylor \cite{ERT}]\label{thm:ert}
    If a connected graph $G$ is not $K_n$ and not an odd cycle, then $\ch(G)\leq \Delta(G)$.
\end{thm}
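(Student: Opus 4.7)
The plan is to adapt the inductive proof of Brooks' theorem to the list-coloring setting. I will fix an arbitrary $\Delta(G)$-uniform list assignment $L$ and build a proper $L$-coloring of $G$ by greedily processing the vertices in a carefully chosen order. The argument splits into two main cases, depending on whether $G$ contains a vertex of degree less than $\Delta(G)$ or is $\Delta(G)$-regular.

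First, suppose $G$ has a vertex $v_0$ with $\deg(v_0) < \Delta(G)$. Running a BFS from $v_0$, I label the vertices $v_0, v_1, \ldots, v_{n-1}$ so that each $v_i$ with $i \geq 1$ is adjacent to some $v_j$ with $j < i$. Then I color greedily in the \emph{reverse} order $v_{n-1}, v_{n-2}, \ldots, v_1, v_0$. When reaching $v_i$ with $i \geq 1$, its BFS parent is still uncolored, so at most $\deg(v_i) - 1 \leq \Delta(G) - 1$ neighbors of $v_i$ have been colored; since $|L(v_i)| = \Delta(G)$, an unused color exists. The root $v_0$ has fewer than $\Delta(G)$ neighbors and can likewise be colored from $L(v_0)$.

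Next, suppose $G$ is $\Delta(G)$-regular. If $\Delta(G) = 2$ then $G$ is an even cycle (being neither an odd cycle nor $K_3$), and even cycles are $2$-choosable by a standard direct check. When $\Delta(G) \geq 3$, I invoke the well-known structural consequence of Brooks' theorem: there exist vertices $u, v, w \in V(G)$ with $uw, vw \in E(G)$, $uv \notin E(G)$, and $G - \{u, v\}$ connected. If $L(u) \cap L(v) \neq \emptyset$, I choose a common color $c$ for both; otherwise $|L(u)| + |L(v)| = 2\Delta(G) > |L(w)|$ forces at least one of $L(u), L(v)$ to escape $L(w)$, and I pick $c_u \in L(u)$ and $c_v \in L(v)$ so that $|\{c_u, c_v\} \cap L(w)| \leq 1$. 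After deleting $u$ and $v$ and removing the chosen colors from their neighbors' lists, I color the remainder in reverse BFS order from $w$ inside $G - \{u, v\}$, exactly as in the non-regular case.

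The hard part is verifying that the greedy phase truly succeeds at every vertex --- the bookkeeping is tight. The decisive vertex is $w$: in $G - \{u, v\}$ it has $\Delta(G) - 2$ neighbors, and by my choice of $c_u, c_v$ its list shrinks by at most one, leaving $|L(w) \setminus \{c_u, c_v\}| \geq \Delta(G) - 1$, i.e.\ exactly one color more than the number of colored neighbors to dodge. Parallel inequalities must be checked at every neighbor of $u$ or $v$, where the list may lose up to two colors while the degree in $G - \{u, v\}$ drops by up to two. I would take the Brooks structural lemma (existence of $u, v, w$) as a black box from the classical proof; establishing the slack-free greedy step after $u, v$ are colored is where the real work of the proof concentrates.
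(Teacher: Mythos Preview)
The paper does not prove this theorem at all: Theorem~\ref{thm:ert} is quoted from Erd\H{o}s, Rubin, and Taylor and used only as a black box (to get $\ch(\Pi_n)\le 3$ in Theorem~\ref{thm:3choose}). So there is nothing in the paper to compare your argument against. That said, your sketch is the standard list-coloring adaptation of Lov\'asz's proof of Brooks' theorem, and the non-regular case together with your treatment of the $L(u)\cap L(v)=\emptyset$ subcase (choosing $c_u,c_v$ so that $|\{c_u,c_v\}\cap L(w)|\le 1$) is correct and nicely done.

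There is, however, one genuine gap. The structural lemma you invoke --- the existence of nonadjacent $u,v$ with a common neighbor $w$ such that $G-\{u,v\}$ is connected --- is only guaranteed when $G$ is $2$-connected. It can fail for $\Delta$-regular connected graphs with a cut vertex. For a concrete $3$-regular example, take a vertex $v$ joined to $a$ in one piece and to $b,c$ in another, where the $a$-piece is $\{a,p,q,r,s\}$ with edges $ap,aq,pr,ps,qr,qs,rs$ and the $\{b,c\}$-piece is $K_4$ on $\{b,c,x,y\}$ minus the edge $bc$. This graph is $3$-regular, not complete, not a cycle, yet an exhaustive check shows that for every nonadjacent pair with a common neighbor, deleting the pair disconnects the graph. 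Your case split therefore misses the situation ``$G$ is $\Delta$-regular but has a cut vertex.''

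The standard repair is short: if $G$ has a cut vertex, run your reverse-BFS greedy through the block--cut tree. Order the blocks so that each block $B$ after the first meets the previously colored part in its unique cut vertex $x_B$; since $x_B$ has neighbors outside $B$, its degree inside $B$ is strictly less than $\Delta$, so a reverse-BFS inside $B$ rooted at $x_B$ (with $x_B$ already colored) succeeds exactly as in your non-regular case. Adding this paragraph, and restricting your $u,v,w$ argument to the $2$-connected case, makes the proof complete.
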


A vertex coloring of $G$ partitions $V(G)$ into sets called \textbf{color classes}.  All vertices in a particular color class are assigned the same color, so if the coloring is proper, each color class is an independent set of vertices. Depending on the context, we may wish to ensure that the color classes are roughly uniform in size.  A proper $k$-coloring is said to be \textbf{equitable} if the size of every color class is either $\lceil |V(G)|/k\rceil$ or $\lfloor |V(G)|/k \rfloor$.  In \cite{KPW}, Kostochka, Pelsmajer, and West introduced a variation of equitable coloring for $k$-uniform list colorings.  In particular, they say that a graph $G$ is \textbf{equitably $k$-choosable} if every $k$-uniform list assignment of $G$ admits a proper coloring in which each color class is \textbf{$\lceil |V(G)|/k\rceil$-bounded}.  That is, they allow each color class to have any size up to $\lceil |V(G)|/k\rceil$.

In this paper, we will investigate the choosability and equitable choosability parameters of the infinite family of \textbf{prism graphs}, $\{\Pi_n ~:~ n\geq 3\}$.  Note that the $n$-prism $\Pi_n$ is the Cartesian product $C_n \square K_2$, whose drawing resembles a geometric prism with an $n$-sided polygon base.  We will refer to the $n$ edges which attach the two copies of $C_n$ in $\Pi_n$ as \textbf{rungs}.  Note that the prism graphs are cubic, Hamiltonian, and planar (but not outerplanar).  When $n$ is even, the prism $\Pi_n$ is bipartite.  And when $n\geq 4$, the prism $\Pi_n$ has girth 4.

There are several known results which are relevant to our research.  Gr\"otzsch proved that planar graphs of girth at least 4 are 3-colorable \cite{grotzsch}, and Alon and Tarsi proved that bipartite planar graphs are 3-choosable \cite{alontarsi}.  These results, along with Theorem~\ref{thm:ert}, suggest that the choice number of all prism graphs is likely 3.

There are also several relevant results known about equitable choosability.  Zhu and Bu proved that all 3-regular outerplanar graphs are equitably 3-choosable \cite{ZhuBu}.  Dong and Zhang proved that a graph $G$ with maximum average degree less than 3 is equitably $k$-colorable and equitably $k$-choosable for $k\geq \max\{\Delta(G),4\}$ \cite{DZ}. Pelsmajer proved that graphs with maximum degree equal to 3 are equitably $k$-choosable for $k>3$ \cite{pelsmajer}, and Wang and Lih proved the following related theorem about graphs with maximum degree at most 3.
\begin{thm}[Wang and Lih \cite{WangLih}]\label{thm:wanglih}
    Every graph $G$ with $\Delta(G)\leq 3$ is equitably $k$-choosable whenever $k>\Delta(G)$.
\end{thm}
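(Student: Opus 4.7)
The plan is to argue by strong induction on $n = |V(G)|$, applying a ``color and remove an independent set'' reduction that is standard in equitable list coloring. For the base case $n \le k$ the cap $\lceil n/k\rceil = 1$ forces all colors in an admissible $L$-coloring to be pairwise distinct. Since $|L(v)| = k \ge n$ for every $v$, Hall's condition is trivially satisfied by $L(v_1),\ldots,L(v_n)$, producing a system of distinct representatives; being rainbow, it is automatically a proper $L$-coloring, and the size-one color classes meet the equitability cap.

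For $n > k$, set $q = \lceil n/k\rceil$. I would locate a color $c$ and an independent set $I \subseteq \{v : c \in L(v)\}$ of a carefully chosen size (roughly $q$), color every vertex of $I$ with $c$, and apply the inductive hypothesis to $G - I$ equipped with the lists $L'(v) = L(v) \setminus \{c\}$, which are $(k-1)$-uniform on a graph of maximum degree at most $3$. A short calculation gives $\lceil (n - |I|)/(k-1)\rceil \le q$ when $|I|$ is chosen correctly, so the inductive coloring — appended to the monochromatic coloring of $I$ — respects the original cap. Because the list size drops from $k$ to $k-1$ at each reduction, the induction should be set up jointly in $n$ and $k$, or formulated as a strengthened statement covering all pairs $(G, L)$ with $|L(v)| \ge \Delta(G) + 1$ for every vertex.

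The main obstacle is producing an independent set $I$ of the required size that shares a common color $c$. Double counting gives $\sum_c |V_c| = nk$, where $V_c := \{v : c \in L(v)\}$, so averaging over the color universe locates a color $c$ with $|V_c|$ suitably large; since $\Delta(G[V_c]) \le 3$, a maximal independent set in $G[V_c]$ has size at least $\lceil |V_c|/4 \rceil$, which meets $q$ provided $k \ge 4$ and the color universe is not too large. Reducing to such a universe — for instance, on a minimum counterexample, by substituting any color unused across the counterexample with one that is used — is the key technical move. A second obstacle is the base case in $k$: when $k = \Delta(G) + 1$ one cannot further reduce $k$, so the subcubic case with $3$-uniform lists must be finished directly, for example by invoking Theorem~\ref{thm:ert} to secure a proper $L$-coloring and then repairing equitability by a local Kempe-style swap on each overloaded color class.
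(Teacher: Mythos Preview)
First, note that the paper does not prove this theorem: it is quoted from Wang and Lih as background, with no proof given, so there is no ``paper's own proof'' to compare against.

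That said, your sketch has a real gap. Your reduction peels off an independent set $I$ sharing a color $c$ and passes to $G-I$ with the $(k-1)$-uniform lists $L'(v)=L(v)\setminus\{c\}$. But the theorem only promises equitable choosability when the list size strictly exceeds $\Delta$, so after one step from $k=\Delta(G)+1$ you land at $k-1=\Delta(G)$, which is \emph{not} covered by the statement you are trying to prove. You acknowledge this and propose to finish the ``subcubic, $3$-uniform list'' case by taking a proper $L$-coloring from Theorem~\ref{thm:ert} and repairing equitability with Kempe-style swaps. That does not work for list colorings: a Kempe chain may pass through vertices whose lists do not contain the other color, so the swap need not be available. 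Worse, the residual problem you are handing off---equitable $\Delta$-choosability of subcubic graphs---is precisely Conjecture~\ref{conj:kpw} in the $\Delta=3$ case, the very thing the present paper spends all of Sections~\ref{sec:natmost5}--\ref{sec:natleast6} establishing for the single family of prisms. So your ``base case in $k$'' is not a base case at all; it is the hard open problem.

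The standard route (and the one Wang--Lih and Kostochka--Pelsmajer--West actually take) keeps $k$ fixed and reduces $n$ by $k$: one finds an ordered set $S=\{v_1,\dots,v_k\}$ of $k$ vertices such that $v_i$ has at most $k-i$ neighbors outside $\{v_1,\dots,v_i\}$, colors $G-S$ equitably by induction (now $\lceil (n-k)/k\rceil=\lceil n/k\rceil-1$), and then greedily colors $v_k,v_{k-1},\dots,v_1$ with pairwise distinct colors, each avoiding its already-colored neighbors. The degree hypothesis $\Delta\le 3<k$ is exactly what lets one locate such an $S$. If you rework your induction along these lines, the problematic drop in list size disappears.
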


In 1994, Chen, Lih, and Wu proposed the Equitable $\Delta$-Coloring Conjecture:
\begin{conj}[The Equitable $\Delta$-Coloring Conjecture\cite{ChenLihWu}]
    Let $G$ be a connected graph.  If $G$ is not a complete graph, or an odd cycle, or a complete bipartite graph $K_{2m+1,2m+1}$, then $G$ is equitably $\Delta(G)$-colorable.
\end{conj}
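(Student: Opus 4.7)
The plan is to attempt an induction on $|V(G)|$, with small cases verified directly. For the inductive step I would try to identify, in any connected $G$ that is not one of the three listed exceptions, a \emph{reducible configuration}: a small subgraph $H \subseteq G$ whose vertices can be removed (or whose vertices can be identified with another set of vertices) so that the resulting graph $G'$ satisfies $|V(G')|<|V(G)|$, $\Delta(G')\leq \Delta(G)$, and $G'$ is still not an excluded graph, allowing the application of the inductive hypothesis to produce an equitable $\Delta(G)$-coloring of $G'$ that can be extended back to $V(H)$ while keeping all color classes of size $\leq \lceil |V(G)|/\Delta(G)\rceil$.

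I would split the analysis by the magnitude of $\Delta(G)$. For $\Delta(G)\leq 3$, the conjecture is already known from results such as Chen--Lih--Wu and the equitable variants of Theorem~\ref{thm:wanglih}, so these cases can be invoked as the base of the induction. For $\Delta(G) \geq |V(G)|/2$, the Chen--Lih--Wu theorem on equitable $\Delta$-coloring in the dense regime applies. The substantive work is therefore in the intermediate range $4 \leq \Delta(G) < |V(G)|/2$, where I would hunt for reducible configurations using a discharging argument: assign initial charge $d(v)-\Delta$ (or a similar function) and redistribute it to show that every connected $G$ in this range, unless isomorphic to $K_{2m+1,2m+1}$, contains either a low-degree vertex with a suitably structured neighborhood or a pair of nonadjacent vertices sharing enough neighbors to allow vertex identification.

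The extension step requires more than just finding any reducible structure; it must be compatible with the equitability constraint. Concretely, after inductively equitably coloring $G'$, each color class has size $\lceil (|V(G)|-|V(H)|)/\Delta(G)\rceil$ or one less, and extending the coloring to $H$ must not push any class above $\lceil |V(G)|/\Delta(G)\rceil$. I would handle this by a Hall-type matching argument on the bipartite graph between vertices of $H$ and colors that are both available (not used on a neighbor) and still have room in their class. When $|V(G)|$ is divisible by $\Delta(G)$ the target is a true equitable coloring and the slack is minimal, so the reducible configurations chosen must be tight enough to leave the right number of colors with extra capacity.

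The main obstacle, and the reason the full conjecture has resisted proof since 1994, is the middle range of $\Delta(G)$: guaranteeing a reducible configuration that simultaneously respects bounded color-class sizes is much harder than the analogous reduction in ordinary coloring, because one cannot freely recolor the already-colored graph $G'$. A secondary difficulty is verifying sharpness for the three exceptional families and ruling out other near-exceptions (for instance, showing that small modifications of $K_{2m+1,2m+1}$ regain equitable $\Delta$-colorability), which is what controls the base cases of the discharging argument. I anticipate that any complete proof will either require substantially new structural insight in this intermediate range or a wholly different technique, such as a probabilistic or algebraic approach to equitable list extension.
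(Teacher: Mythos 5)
There is a fundamental mismatch here: the statement you are attempting to prove is the Equitable $\Delta$-Coloring Conjecture of Chen, Lih, and Wu, which the paper states \emph{as a conjecture} and does not prove. It has been open since 1994. The paper's actual contribution is much narrower: it verifies the Kostochka--Pelsmajer--West \emph{list} analogue (Conjecture~\ref{conj:kpw}) for the single infinite family of prism graphs, which, being cubic, already fall under the known $\Delta(G)\leq 3$ case of the coloring conjecture (Theorem~\ref{thm:chenlihwu}). So there is no proof in the paper for you to be compared against, and your text cannot be judged ``correct'' --- it is a research program, not a proof.

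The concrete gap in your proposal is the one you yourself concede: the entire substantive content lies in exhibiting, for every connected $G$ with $4\leq\Delta(G)<|V(G)|/2$ that is not $K_{2m+1,2m+1}$, a reducible configuration whose deletion-and-extension step is compatible with the $\lceil |V(G)|/\Delta(G)\rceil$ bound on class sizes. You describe the shape such a configuration would need to have (a discharging argument with initial charge $d(v)-\Delta$, a Hall-type extension over colors with remaining capacity), but you produce neither the configurations nor the unavoidability argument, and you correctly note that recoloring restrictions make the extension step strictly harder than in ordinary coloring. The base cases you invoke ($\Delta(G)\leq 3$ via Chen--Lih--Wu, and the dense regime $\Delta(G)\geq |V(G)|/2$) are genuine known results, but they do not combine to cover the middle range. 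As written, this is an honest outline of why the conjecture is hard, not a proof of it; if your goal was to prove something the paper actually establishes, you should instead target Theorem~\ref{thm:mainthm} or one of its supporting lemmas.
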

They also proved the following special case of their conjecture:
\begin{thm}[Chen, Lih, and Wu \cite{ChenLihWu}]\label{thm:chenlihwu}
    A connected graph $G$ with $\Delta(G)\leq 3$ is equitably $\Delta(G)$-colorable if it is different from $K_m$, $C_{2m+1}$, and $K_{2m+1,2m+1}$ for all $m\geq 1$.
\end{thm}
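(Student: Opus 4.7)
The plan is to split into cases based on $\Delta(G)$, with $\Delta(G)=3$ being the substantive case. When $\Delta(G)\le 2$, the connected graph $G$ is a path or a cycle; excluding $K_1$, $K_2$, and the odd cycles $C_{2m+1}$ leaves paths $P_n$ (for $n\ge 3$) and even cycles $C_{2m}$, and a standard alternating pattern yields an equitable $\Delta(G)$-coloring. So we may assume $\Delta(G)=3$.

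For $\Delta(G)=3$, since $G$ is connected and $G\ne K_4$, Brooks' theorem guarantees that $G$ is properly $3$-colorable. Write $n=|V(G)|$; our goal is a proper $3$-coloring whose color classes have sizes in $\{\lfloor n/3\rfloor,\lceil n/3\rceil\}$. I would proceed by induction on $n$, with small base cases checked by hand. For the inductive step, I would look for a small induced subgraph $H\subseteq G$ (for example, three vertices of pairwise different colors in some auxiliary coloring, or a suitable short path) such that $G-H$ still satisfies the hypothesis of the theorem. Induction then supplies an equitable $3$-coloring of $G-H$, and the task reduces to extending this coloring to $H$ while using exactly one vertex of each color on $H$, preserving both properness and the equitable balance. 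A complementary approach, which I would consider as a backup, is to start from an arbitrary proper $3$-coloring $V_1,V_2,V_3$ and rebalance by Kempe-chain swaps on the bipartite subgraph $G[V_i\cup V_j]$: swapping the colors on a component $C$ with $|C\cap V_i|>|C\cap V_j|$ decreases the imbalance, so iterating this process should drive the coloring toward equitability.

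The main obstacle is showing that the rebalancing (or the extension in the inductive setup) \emph{always} succeeds unless $G$ is one of the excluded graphs. The role of $K_{3,3}$ is subtle: its rigid bipartite structure forces every proper $3$-coloring to respect the bipartition in a way that prevents the $2+2+2$ split, since each side has odd cardinality $3$. I would need a structural lemma characterizing the obstructions to a size-improving Kempe swap and verifying that these obstructions force $G\in\{K_4,K_{3,3}\}$ (in the cubic regime). For the inductive route, the delicate point is ensuring that $G-H$ does not fall into the excluded list, which likely requires careful case analysis around graphs that are structurally close to $K_{3,3}$ or contain a $K_4$-minus-an-edge, together with a few direct verifications for small graphs. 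I expect this structural/extremal step—ruling out all non-excluded obstructions—to be the technical heart of the proof, while the easy cases and the high-level induction are relatively routine.
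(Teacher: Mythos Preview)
The paper does not prove this theorem: it is quoted as a result of Chen, Lih, and Wu and used as a black box (in Lemma~\ref{lem:difflistsinnoneqlexmin}), so there is no in-paper proof to compare against. Your proposal must therefore be judged on its own.

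As written, it is an outline rather than a proof, and both suggested routes have genuine gaps. For the Kempe-chain approach, the claim that swapping colors on a component $C$ of $G[V_i\cup V_j]$ with $|C\cap V_i|>|C\cap V_j|$ ``decreases the imbalance'' is not justified: the difference $d=|C\cap V_i|-|C\cap V_j|$ can be large (since in a graph with $\Delta=3$ the bipartite subgraph $G[V_i\cup V_j]$ need not decompose into paths and even cycles---a vertex of $V_i$ may have all three neighbours in $V_j$), so a single swap can overshoot and a naive iteration need not converge to an equitable partition. You would need either a potential function that strictly decreases under some well-chosen swap, or a structural argument producing a component with $d=1$; neither is supplied. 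For the inductive approach, removing a small subgraph $H$ from a connected $G$ may disconnect it, and equitable $3$-colorability is not preserved under disjoint union, so the induction hypothesis cannot be invoked directly; you acknowledge the need to keep $G-H$ outside the excluded list but not the need to keep it connected (or to handle the disconnected case separately).

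Your diagnosis that the exclusion of $K_{3,3}$ is the crux is correct, but the proposal does not yet contain the structural lemma that would make either strategy go through.
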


In 2003, when Kostochka, Pelsmajer, and West introduced their list analogue of equitable coloring, they also included a list analogue of the Equitable $\Delta$-Coloring Conjecture:
\begin{conj}[Kostochka, Pelsmajer, and West \cite{KPW}]\label{conj:kpw}
    If $G$ is a connected graph with maximum degree at least 3, then $G$ is equitably $\Delta(G)$-choosable, unless $G$ is a complete graph or is $K_{d,d}$ for some odd $d$.
\end{conj}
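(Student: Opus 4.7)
The plan is strong induction on $|V(G)|$ combined with a discharging argument. For base cases, I would dispatch small graphs by direct case analysis together with Theorem~\ref{thm:wanglih} (which already handles $\Delta(G)=3$ whenever the number of colors exceeds $\Delta(G)$) and Theorem~\ref{thm:chenlihwu} for the classical equitable coloring side of the $\Delta=3$ case. Thereafter, let $G$ be a minimum counterexample, distinct from every $K_n$ and every $K_{d,d}$ with $d$ odd, together with an adversarial $\Delta(G)$-uniform list assignment $L$ admitting no equitable $L$-coloring respecting the cap $\lceil |V(G)|/\Delta(G)\rceil$.

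The inductive step rests on identifying a \emph{reducible configuration}: a small vertex set $S\subseteq V(G)$ such that every equitable $L$-coloring of $G-S$ extends to one of $G$. Natural candidates include (i) a low-degree vertex adjacent to another low-degree vertex, (ii) a short path of $\Delta$-vertices terminating at lower-degree vertices, and (iii) small subgraphs in which a Kempe-type recoloring can free a color at a conflict point without overfilling any class elsewhere. The extension step combines three tools: greedy assignment from $L(v)$ avoiding colors already at quota; swaps along alternating subgraphs chosen so that both endpoints of the swap sit below quota; and Hall-type matching arguments applied to the bipartite auxiliary graph of uncolored vertices against still-available colors.

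The discharging phase assigns each vertex $v$ an initial charge $\deg(v)-\Delta(G)$, so total charge is nonpositive, with equality only in the $\Delta$-regular case. Local redistribution rules, designed so that only the identified reducible configurations can end with positive charge, yield a contradiction in the non-regular case. The $\Delta$-regular case, where initial charge is zero, requires a distinct structural argument; one plausible route is to invoke Vizing's theorem to obtain a near-proper edge coloring, then to pair color classes via a parity argument — a strategy in the spirit of the proof of the Equitable $\Delta$-Coloring Conjecture for $\Delta\le 3$ (Theorem~\ref{thm:chenlihwu}) — and separately to rule out graphs close to $K_{\Delta+1}$ or $K_{\Delta,\Delta}$ by an ad hoc analysis.

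The principal obstacle, and the reason this conjecture remains open in full generality, is that the equitability cap is a \emph{global} constraint ill-suited to local reduction. Deleting $S$ shifts the quota to $\lceil (|V(G)|-|S|)/\Delta(G)\rceil$, so an inductively produced coloring is not automatically extendable without bookkeeping on which classes sit at the old quota, and a Kempe swap repairing a local conflict may push some class over quota far away. The near-exceptional families — bipartite $\Delta$-regular graphs close to $K_{d,d}$, and graphs that barely fail to be complete — are precisely those for which every reducible configuration must be verified against an adversary that can tune $L$ to block easy extensions, and I expect that overcoming them will require a separate matching- or flow-based argument to be grafted onto the main discharging framework before the induction closes.
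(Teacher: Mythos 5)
Your proposal targets Conjecture~\ref{conj:kpw}, which is an \emph{open conjecture} of Kostochka, Pelsmajer, and West: the paper does not prove it, and only verifies the special case of prism graphs (Theorem~\ref{thm:mainthm}). So there is no proof in the paper to compare against, and your text, as written, is a research program rather than a proof --- you acknowledge as much in your final paragraph. The concrete gaps are these. First, no reducible configuration is actually exhibited or verified; your candidates (i) and (ii) involve low-degree vertices and are vacuous for $\Delta$-regular graphs, yet the regular case is exactly where the exceptional graphs $K_n$ and $K_{d,d}$ live and where the conjecture is hardest. Your discharging scheme with initial charge $\deg(v)-\Delta(G)$ is identically zero on regular graphs, so the entire discharging phase contributes nothing there, and the ``distinct structural argument'' you defer to (Vizing plus a parity pairing) is not known to yield equitable \emph{list} colorings. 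Second, the quota-shift problem you correctly identify --- that deleting $S$ changes the cap from $\lceil |V(G)|/\Delta(G)\rceil$ to $\lceil (|V(G)|-|S|)/\Delta(G)\rceil$ --- is named but not resolved; the known inductive technique of \cite{KPW} requires deleting exactly $\Delta(G)$ carefully chosen vertices and coloring them last so that each new class grows by at most one, and your sketch does not carry out that bookkeeping.

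For contrast, the paper's argument for prisms is structurally different from what you outline: it does not induct on $|V(G)|$ at all. It fixes a lex-min $L$-coloring $c$ (minimizing the lexicographic order of the sorted color-class sizes), proves that specific colored configurations $F_1$ through $F_8$ cannot occur in $c$ (Lemmas~\ref{lem:reducibleconfigsDiff0or1}, \ref{lem:reducibleconfigsdifferatleast2}, \ref{lem:adjacent2plusand3blocks}), derives the block decomposition of Figure~\ref{fig:blocks}, and then discharges over vertices and faces of the planar embedding (Theorem~\ref{thm:mainresult}). Every step leans on the prism's 3-regularity, planarity, and rung structure, which is precisely why the argument does not extend to the general conjecture. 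If you want to contribute here, the realistic move is to pick another concrete family (or a structural class such as graphs of bounded treewidth or maximum average degree, in the spirit of \cite{DZ}) rather than to attack the full statement with a generic discharging framework.
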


Our main result verifies that Conjecture~\ref{conj:kpw} holds for the infinite family of prism graphs.
\begin{thm}\label{thm:mainthm}
    $\Pi_n$, $n\geq 3$, is equitably 3-choosable.
\end{thm}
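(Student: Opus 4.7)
The plan is to prove Theorem~\ref{thm:mainthm} by a minimum-counterexample argument combined with a discharging phase. First I would dispatch the small base cases ($n=3,4$, and possibly a few additional values) by direct case analysis: the lists have size 3, the prism has a large automorphism group, and one can enumerate the essentially different list patterns by hand to produce an equitable $L$-coloring in each.

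For the inductive step, suppose $n$ is the smallest integer for which some 3-uniform list assignment $L$ of $\Pi_n$ admits no equitable $L$-coloring. Label the outer cycle $u_1,\ldots,u_n$, the inner cycle $v_1,\ldots,v_n$, and the rungs $r_i=u_iv_i$. Classify each rung by the relationship between $L(u_i)$ and $L(v_i)$: the three cases are $L(u_i)=L(v_i)$, $|L(u_i)\cap L(v_i)|=2$, and $|L(u_i)\cap L(v_i)|\leq 1$. The heart of the proof will be a short list of \emph{reducibility lemmas}: for each of a handful of local patterns on a window of consecutive rungs, the vertices of the window can be pre-colored so that the residual list assignment $L'$ on the complement, a graph obtainable from a smaller prism $\Pi_{n-k}$ by a small number of identifications, admits an equitable $L'$-coloring by the inductive hypothesis. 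Gluing the pre-colored window to the inductive coloring must respect the global bound $\lceil 2n/3\rceil$, so each reducibility lemma will need to track color-usage counts explicitly rather than just proper extendability.

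The discharging phase would assign initial charges to the rungs according to list type (for example, rungs with $L(u_i)=L(v_i)$ receive a larger charge than mixed rungs), with the total calibrated from $n$ and $n \bmod 3$. Local redistribution rules — moving charge between neighboring rungs depending on the types of $r_{i-1}, r_i, r_{i+1}$ and of the cycle edges on either side — would be designed so that every rung whose final charge exceeds a chosen threshold is forced to lie inside one of the reducible windows identified in the previous step. Since the minimum counterexample by hypothesis contains no reducible window, the final charges cannot balance against the total initial charge, giving the desired contradiction.

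The main obstacle, as I see it, is twofold. First, finding a sufficient but tractable list of reducible configurations: rich enough that a discharging argument forces one to appear in \emph{every} bad $(\Pi_n,L)$ on a large prism, yet small enough that each individual reduction can be verified by hand. Second, the equitable constraint couples color counts globally across all $2n$ vertices, so each reduction must carry precise information about how many times each color is used in the pre-colored window — and the target multiset for the inductive call depends on both $n \bmod 3$ and the three colors chosen in the window. I expect the bulk of the technical work to lie in this color-counting bookkeeping rather than in the discharging calculation itself.
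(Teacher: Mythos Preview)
Your plan diverges from the paper's argument in a way that introduces a real gap. The paper does \emph{not} reduce to smaller prisms at all: it fixes $n$, takes a proper $L$-coloring $c$ of $\Pi_n$ whose color-class size sequence is lexicographically minimum, and argues directly that $c$ must be $\lceil 2n/3\rceil$-bounded. All of its ``reducible configurations'' are patterns in the \emph{coloring} $c$ (placements of the largest one or two color classes on short windows of rungs), and ``reducing'' one means locally recoloring to obtain a lexicographically smaller color word---no smaller prism, no gluing, no bookkeeping across an inductive call. The discharging is then calibrated to the count of vertices in the single largest color class, via charges on vertices and $4$-faces.

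Your scheme, by contrast, wants to delete a window, invoke the inductive hypothesis on something like $\Pi_{n-k}$, and splice the pieces. The gap is that the inductive hypothesis only promises \emph{some} equitable $L'$-coloring of the smaller prism; it gives you no control over which colors appear or with what multiplicities. So even if you pre-color a $k$-rung window perfectly evenly, the inductive coloring may load an unrelated color up to $\lceil 2(n-k)/3\rceil$, and there is no mechanism to force it to avoid the colors you used on the window boundary or to coordinate totals to stay under $\lceil 2n/3\rceil$. Compounding this, the complement of a window is not a prism unless you sew the ends together, and sewing either adds adjacencies absent from $\Pi_n$ or, if you instead restrict lists on boundary vertices, destroys $3$-uniformity---so the hypothesis does not literally apply. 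Finally, classifying rungs by the overlap $|L(u_i)\cap L(v_i)|$ is information about $L$, not about any coloring, and there is no evident reason a discharging argument on those types should detect the \emph{size of a color class}; the paper's discharging works precisely because its charges are defined from an extremal coloring. You have correctly flagged the color-count bookkeeping as the hard part, but as stated the inductive framework cannot carry that information; switching to an extremal-coloring viewpoint (lex-min or similar) is what makes the argument go through.
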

Our proof of Theorem~\ref{thm:mainthm} is organized as follows:
In Section~\ref{sec:3chooseprisms}, we start by verifying that the choice number of all prisms is 3.  Then in Section~\ref{sec:natmost5}, we show that $\Pi_n$ is equitably 3-choosable for $n\in\{3,4,5\}$.  In Section~\ref{sec:natleast6}, we complete the proof that all prisms are equitably 3-choosable.  This argument is broken into two subsections.  In Section~\ref{subsec:bluereddifferby0or1} we prove that in a minimum list coloring of $\Pi_n$, $n\geq 6$, the two largest color classes must differ in size by at least 2 vertices.  Finally, in Section~\ref{subsec:blue2bigger} we use a discharging argument to prove that a minimum list coloring of $\Pi_n$, $n\geq 6$, must be equitable.


\section{3-Choosability of Prisms}
\label{sec:3chooseprisms}

For a graph to be equitably $k$-choosable, it must be $k$-choosable.  Thus, we begin by proving that the prism graphs are not only $3$-choosable, but have choice number equal to 3. This result was proved by the third author as part of their 2021 undergraduate thesis, and their proof is recreated here.

\begin{thm}\label{thm:3choose}
    $\ch(\Pi_n)=3$ for all $n\geq 3$.
\end{thm}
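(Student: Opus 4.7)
The plan is to establish $\ch(\Pi_n)\le 3$ via the Erdős–Rubin–Taylor upper bound and $\ch(\Pi_n)\ge 3$ by showing $\Pi_n$ is not $2$-choosable.

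For the upper bound, I would simply verify the hypotheses of Theorem~\ref{thm:ert}: $\Pi_n$ is connected, has maximum degree $3$, is not a complete graph (for $n\ge 3$ it has $2n\ge 6$ vertices but only $3n$ edges), and is not an odd cycle (it is not a cycle at all, since $3n>2n$). Theorem~\ref{thm:ert} then yields $\ch(\Pi_n)\le \Delta(\Pi_n)=3$.

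For the lower bound, I would split on the parity of $n$. When $n$ is odd, the inner cycle $C_n$ sits inside $\Pi_n$ as an odd cycle, so $3=\chi(C_n)\le \chi(\Pi_n)\le \ch(\Pi_n)$. When $n$ is even, I would appeal to the Erdős–Rubin–Taylor classification of $2$-choosable graphs: a connected graph is $2$-choosable if and only if its $2$-core is $K_1$, an even cycle, or a theta graph $\theta_{2,2,2m}$. Since $\Pi_n$ is $3$-regular it equals its own $2$-core, and it fits none of the three listed types — a theta graph has only two vertices of degree $\ge 3$, whereas $\Pi_n$ has $2n\ge 8$ such vertices. Hence $\Pi_n$ is not $2$-choosable and $\ch(\Pi_n)\ge 3$.

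The main obstacle is really the even case of the lower bound: if the authors prefer a self-contained argument rather than invoking the $2$-choosable classification, they must instead exhibit an explicit bad $2$-uniform list assignment on $\Pi_{2m}$. A natural candidate alternates lists of the form $\{1,2\}$ and $\{2,3\}$ around one cycle and uses the rungs to transport a conflict onto the other cycle; verifying non-colorability reduces to a short forced-color chase, but making the construction work uniformly in $n$ (and not just for small cases like $n=4$, the cube) is the step that requires genuine care.
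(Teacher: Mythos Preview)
Your proposal is correct. The upper bound and the odd-$n$ lower bound match the paper exactly (the paper phrases the odd case as the constant $2$-list assignment $\{r,b\}$, which is the same observation that $\Pi_n$ contains an odd cycle).

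For even $n$ your route genuinely differs. You invoke the Erd\H{o}s--Rubin--Taylor classification of $2$-choosable graphs and note that the $3$-regular $\Pi_n$ is its own core and is not $K_1$, an even cycle, or a $\theta_{2,2,2m}$; this is valid and very short, at the cost of citing a nontrivial external result. The paper instead stays self-contained and exhibits an explicit bad $2$-uniform list assignment supported on just four consecutive rungs, then checks by a two-branch color chase that no proper coloring exists on that gadget (hence none on $\Pi_n$). Your closing worry about making such a construction uniform in $n$ turns out to be unfounded: because the gadget uses only four rungs, the same fixed assignment works for every even $n\ge 4$ without modification.
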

\begin{proof}
    A result of Erd\H{o}s, Rubin, and Taylor gives the upper bound.  First, notice that the prism graphs are 3-regular, and thus $\Delta(\Pi_n)=3$.  So by Theorem~\ref{thm:ert}, we have $\ch(\Pi_n) \leq 3$.
    
    To prove the lower bound, we will provide examples of 2-list-assignments of $\Pi_n$ which do not admit proper colorings.  There are two cases to consider.
    
    If $n$ is odd, let $L$ assign to every vertex of $\Pi_n$ the list $\{r,b\}$. A proper $L$-coloring of $\Pi_n$ would then corresponding to a proper 2-coloring of $\Pi_n$.  But since $n$ is odd, $\Pi_n$ contains an odd cycle, which cannot be properly 2-colored.  Thus, $L$ does not admit a proper coloring of $\Pi_n$, and $\ch(\Pi_n)\geq 3$ when $n$ is odd.
    
    If $n$ is even, then $\Pi_n$ contains a subgraph consisting of 4 consecutive rungs.  We let $L$ assign lists to that subgraph as shown in Figure~\ref{fig:pinnot} and attempt to properly color it in the following way: Notice that if vertex $v$ is assigned color $b$, then we are unable to properly color vertex $u$ as shown in the left diagram.
    Similarly, if vertex $v$ is assigned color $g$, then we are unable to properly color vertex $u$ as shown in the right diagram.
    Because this list assignment does not allow us to properly color the pictured subgraph, it does not allow us to properly color the full graph.  Thus, $\ch(\Pi_n)\geq 3$ when $n$ is even, and our proof is complete.
    
    \begin{figure}[ht!]
    \centering
    \begin{minipage}{0.4\textwidth}
    \includegraphics[width=\linewidth]{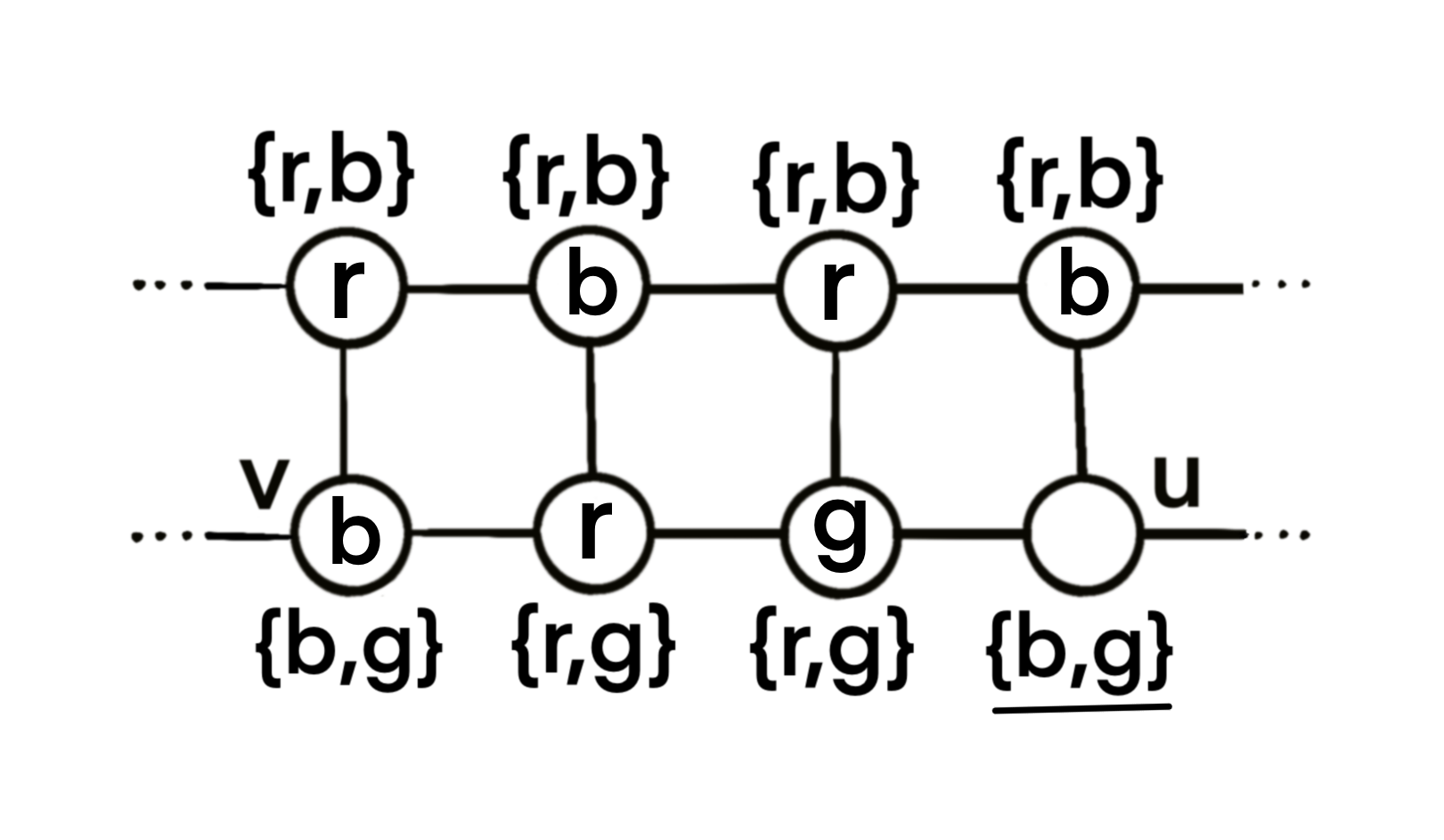}
    \end{minipage}\hspace{0.05\textwidth}
    \begin{minipage}{0.4\textwidth}
    \includegraphics[width=\linewidth]{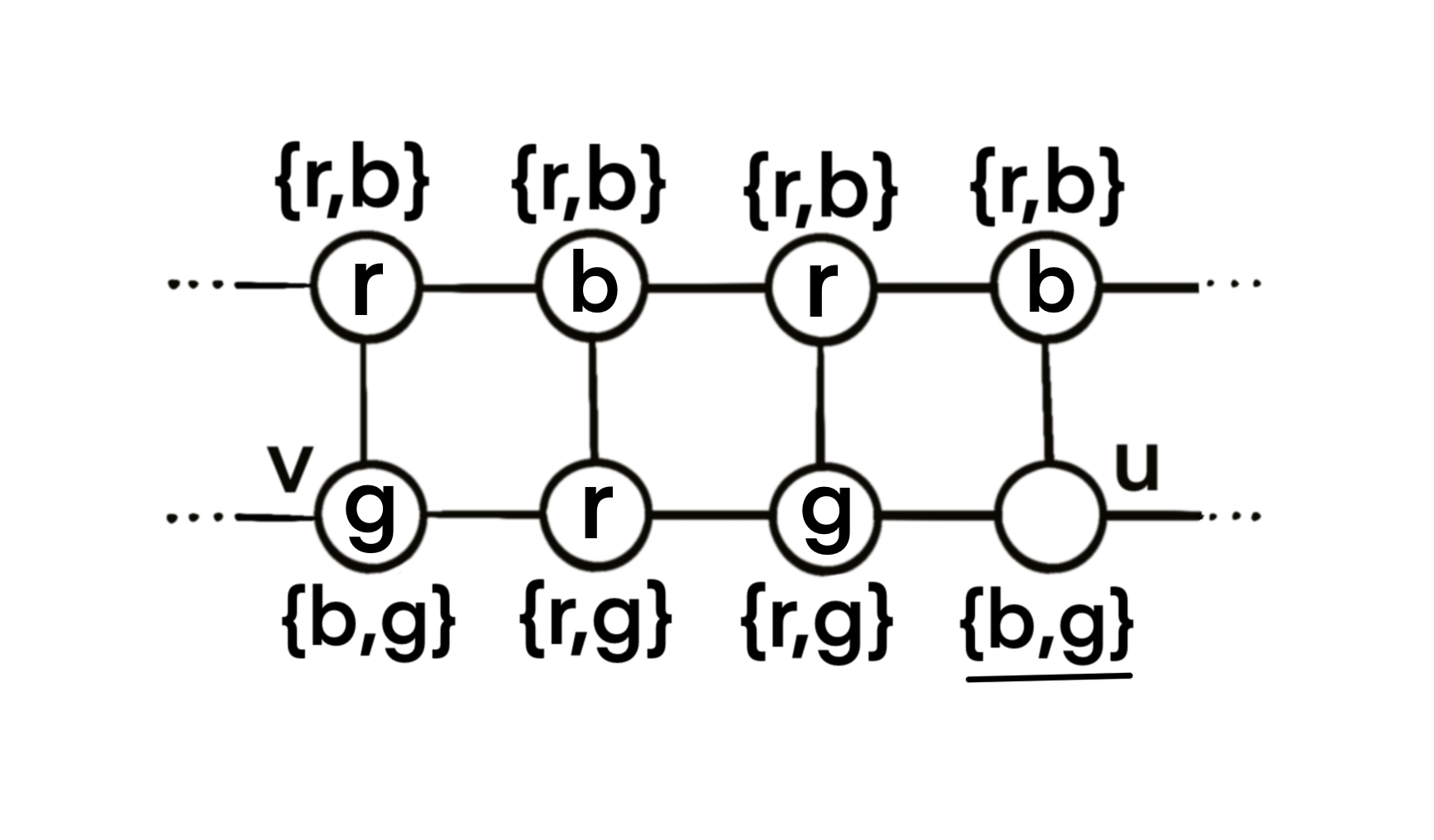} 
    \end{minipage}
    \caption{Failed colorings of $\Pi_n$}
    \label{fig:pinnot}
    \end{figure}
\end{proof}

One consequence of Theorem~\ref{thm:3choose} is that the prisms are \emph{not} equitably 1- or 2-choosable because they are neither 1- nor 2-choosable.  Also, by Theorem~\ref{thm:wanglih}, the prisms \emph{are} equitably $k$-choosable for $k\geq 4$.  So the question that remains is whether or not the prisms are equitably 3-choosable. In the remaining sections of this paper, we will answer this question in the affirmative.


\section{Small Prisms}
\label{sec:natmost5}

In this section we prove that $\Pi_3$, $\Pi_4$, and $\Pi_5$ are equitably 3-choosable.  The proofs for $\Pi_4$ and $\Pi_5$ were done by the third author in their 2021 undergraduate thesis and are recreated below.  For the proof of Lemma~\ref{lem:35prisms}, recall that the \emph{independence number}, $\alpha(G)$, of a graph $G$ is the cardinality of a maximum set of pairwise nonadjacent vertices of $G$.

\begin{lemma}\label{lem:35prisms}
    $\Pi_3$ and $\Pi_5$ are equitably 3-choosable.
\end{lemma}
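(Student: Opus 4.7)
The plan is to reduce this lemma to Theorem~\ref{thm:3choose} by exploiting an arithmetic coincidence: for both $n=3$ and $n=5$, the independence number $\alpha(\Pi_n)$ equals the equitable upper bound $\lceil |V(\Pi_n)|/3 \rceil$. The fact that the lemma statement highlights the definition of $\alpha(G)$ is a strong hint that this is the intended route.

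First I would note the arithmetic: $\lceil |V(\Pi_3)|/3 \rceil = \lceil 6/3 \rceil = 2$ and $\lceil |V(\Pi_5)|/3 \rceil = \lceil 10/3 \rceil = 4$. The computational content of the proof is then showing $\alpha(\Pi_3) = 2$ and $\alpha(\Pi_5) = 4$. For $\Pi_3$, the vertex set is the disjoint union of two triangles, so any independent set contains at most one vertex per triangle, yielding $\alpha(\Pi_3) \leq 2$; a pair $\{u_1, v_2\}$ (distinct rung indices, so no rung edge) achieves this. For $\Pi_5$, the vertex set is the disjoint union of two copies of $C_5$, and since $\alpha(C_5) = 2$ we have $\alpha(\Pi_5) \leq 2+2 = 4$; the set $\{u_1, u_3, v_2, v_4\}$ realizes this bound, because its ``top'' indices $\{1,3\}$ and ``bottom'' indices $\{2,4\}$ are disjoint, so no rung is traversed.

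With these two independence numbers in hand, the lemma follows essentially for free. Given any 3-uniform list assignment $L$ on $\Pi_n$ with $n \in \{3,5\}$, Theorem~\ref{thm:3choose} produces a proper $L$-coloring. Each color class of a proper coloring is an independent set, hence has size at most $\alpha(\Pi_n) = \lceil |V(\Pi_n)|/3 \rceil$. Therefore every proper $L$-coloring of $\Pi_3$ or $\Pi_5$ is automatically $\lceil |V(\Pi_n)|/3 \rceil$-bounded, which is exactly the equitable condition.

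There is no real obstacle here; the lemma is genuinely a corollary of Theorem~\ref{thm:3choose} in disguise. What makes the argument specific to $\Pi_3$ and $\Pi_5$ is precisely that those are the only prisms where $\alpha(\Pi_n) = \lceil |V(\Pi_n)|/3 \rceil$: already $\Pi_4$ has $\alpha = 4 > 3 = \lceil 8/3 \rceil$, and $\Pi_n$ with $n \geq 6$ has $\alpha$ comfortably exceeding $\lceil 2n/3 \rceil$ (by taking alternating vertices with disjoint index sets on the two $C_n$'s). This is why $\Pi_4$ and the family $\{\Pi_n : n \geq 6\}$ are handled by entirely different arguments in the following sections.
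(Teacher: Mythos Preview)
Your proof is correct and takes essentially the same approach as the paper: invoke Theorem~\ref{thm:3choose} to get a proper $L$-coloring, then observe that every color class is independent and hence has size at most $\alpha(\Pi_n)=\lceil 2n/3\rceil$ for $n\in\{3,5\}$. You actually supply more detail than the paper does, since you justify the values $\alpha(\Pi_3)=2$ and $\alpha(\Pi_5)=4$ rather than merely asserting them.
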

\begin{proof}
    First, recall that $\Pi_3$ and $\Pi_5$ are both 3-choosable by Theorem~\ref{thm:3choose}, so every 3-list-assignment of these graphs admits a proper coloring.
    
    For $\Pi_3$ to be equitably 3-choosable, every 3-list-assignment of $\Pi_3$ must admit a proper coloring in which the cardinality of a largest color class is at most $\left\lceil 6/3 \right\rceil =2$.  Since each color class in a proper coloring is an independent set, and the independence number of $\Pi_3$ is $\alpha(\Pi_3)=2$, it must be that every color class contains at most 2 vertices.  Thus, $\Pi_3$ is equitably 3-choosable.

    Similarly, for $\Pi_5$ to be equitably 3-choosable, the cardinality of a largest color class must be at most $\left\lceil 10/3 \right\rceil = 4$.  The independence number of $\Pi_5$ is $\alpha(\Pi_5)=4$, so every color class contains at most 4 vertices, and $\Pi_5$ is equitably 3-choosable.
\end{proof}

To prove that $\Pi_4$ is equitably 3-choosable, it is not sufficient to simply calculate the independence number.  Instead, we perform a case analysis to show that every coloring which is not $\left\lceil |V(\Pi_4)|/3\right\rceil$-bounded can be transformed into a coloring which is.

\begin{lemma}\label{lem:4prisms}
    $\Pi_4$ is equitably 3-choosable.
\end{lemma}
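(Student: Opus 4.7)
\textbf{Proof plan for Lemma~\ref{lem:4prisms}.}
Since $|V(\Pi_4)| = 8$, equitable $3$-choosability here means each color class has at most $\lceil 8/3\rceil = 3$ vertices. By Theorem~\ref{thm:3choose}, every $3$-uniform list assignment $L$ admits a proper $L$-coloring; I would begin with a proper $L$-coloring $c$ that minimizes the size of its largest color class and aim to show this size is at most $3$. Suppose instead some class has size at least $4$. Since $\alpha(\Pi_4) = 4$, such a class is a maximum independent set, and a short check using $\Pi_4 \cong C_4 \square K_2$ shows that the only maximum independent sets of $\Pi_4$ are its two bipartition classes $X$ and $Y$. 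Thus we may assume every vertex of $X$ has color $r$, and then properness forces every $y \in Y$ to avoid $r$.

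From here I would split into cases according to the coloring of $Y$. If $Y$ is also monochromatic, say of color $b$, I would recolor two vertices: pick any $y_0 \in Y$ and a color $c' \in L(y_0) \setminus \{r, b\}$ (nonempty since $|L(y_0)| = 3$), then take the unique $x_0 \in X$ not adjacent to $y_0$ and a color $c'' \in L(x_0) \setminus \{r, b\}$. Both recolorings preserve properness because $y_0$'s neighbors are all $r$ and $x_0$'s neighbors in $Y \setminus \{y_0\}$ are all still $b$; the resulting class sizes are $(r{:}3,\ b{:}3,\ldots)$ with at most two further classes of size $\leq 2$, contradicting minimality of $c$.

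If $Y$ is not monochromatic, every non-$r$ class already has size at most $3$, so it suffices to recolor a single $x \in X$ with some $c^* \in L(x) \setminus \{r\}$ that avoids the colors on $x$'s three neighbors and whose current class has size at most $2$. I would exploit the antipodal bijection on $\Pi_4$: each $x \in X$ has a unique non-neighbor $y_x \in Y$. When $Y$ has a color used on exactly one vertex (which occurs in distributions $(3,1)$, $(2,1,1)$, and $(1,1,1,1)$), the antipode of that singleton sees only non-singleton colors at its neighbors, and after minor list-based subcasing there is enough room in its list for a valid $c^*$. The main obstacle is the $(2,2)$ distribution with colors $b, g$: every $x \in X$ then sees both $b$ and $g$ among its three neighbors, so a valid $c^*$ must lie in $L(x) \setminus \{r, b, g\}$, which is empty exactly when $L(x) = \{r, b, g\}$. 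If some $x$ has a list different from $\{r,b,g\}$ the one-step recoloring works; otherwise I would first recolor some $y \in Y$, either flipping $b \leftrightarrow g$ or swapping to $y$'s third list color, to push $Y$ into one of the already-handled distributions and finish with a one-step $X$-recoloring. Verifying that this preliminary shift is always available and that the subsequent $X$-recoloring lands in the $3$-bounded regime is where the bulk of the case work lies.
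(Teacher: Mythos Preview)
Your plan is essentially the paper's approach: take a proper $L$-coloring whose largest class has size $4$, observe that this class must be one side $X$ of the bipartition of $\Pi_4\cong Q_3$, and then case-split on the coloring of the other side $Y$, recoloring one or two vertices to reach a $3$-bounded coloring. The paper organizes the split into five pictured configurations of the four non-blue vertices rather than by the multiset distribution on $Y$, but the mechanism in each case is the same local recoloring you describe, and your treatment of the monochromatic-$Y$ case matches the paper's Case~1.

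One concrete slip to fix: the assertion that ``the antipode of that singleton sees only non-singleton colors at its neighbors'' holds only for the $(3,1)$ distribution. In $(2,1,1)$ the antipode of one singleton has the \emph{other} singleton among its three neighbors, and in $(1,1,1,1)$ all three neighbor colors of every $x\in X$ are singletons. In that last case it is easy to write down lists for which no single $x$ can be legally recolored off $r$ (for instance $L(x_i)=\{r,c_{i+1},c_{i+2}\}$ with indices mod $4$, where $c_j$ is the color of the antipode $y_j$). So those cases are not handled by a one-move argument plus ``minor subcasing''; they require a genuine two-step recoloring of the kind you reserve for $(2,2)$. This is exactly where the paper's later cases spend their effort, sometimes recoloring a blue vertex together with one of its neighbors, so your plan is on the right track but underestimates where the work actually sits.
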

\begin{proof}
    First, recall that $\ch(\Pi_4)=3$ by Theorem~\ref{thm:3choose}, so every 3-list-assignment of this graph admits a proper coloring.  Let $L$ be an arbitrary 3-list-assignment of $\Pi_4$ and let $c:V\to\mathcal{C}$ be a proper $L$-coloring of $\Pi_4$, where $\mathcal{C}$ is a set of at least three colors.  Without a loss of generality, suppose a largest color class of $c$ corresponds to the color $b$.  We will denote this color class Blue.
    
    If $|\text{Blue}|\leq \left\lceil 8/3 \right\rceil = 3$, then $c$ is 3-bounded and we have found the coloring we need.  So suppose instead that $|\text{Blue}|>3$.  The independence number of $\Pi_4$ is $\alpha(\Pi_4)=4$, so it must be that $|\text{Blue}|=4$.  Thus, due to the symmetries of $\Pi_4$, the coloring $c$ must look like one of the configurations in Figure~\ref{fig:pi4colorings}.
    Note that these cases are distinguished by the different possible colorings of $u_1$, $v_2$, $u_3$, and $v_4$, which must all have colors other than $b$.
    
    \begin{figure}[!ht]
    \begin{center}
    \begin{minipage}{0.3\textwidth}
    \includegraphics[width=\linewidth]{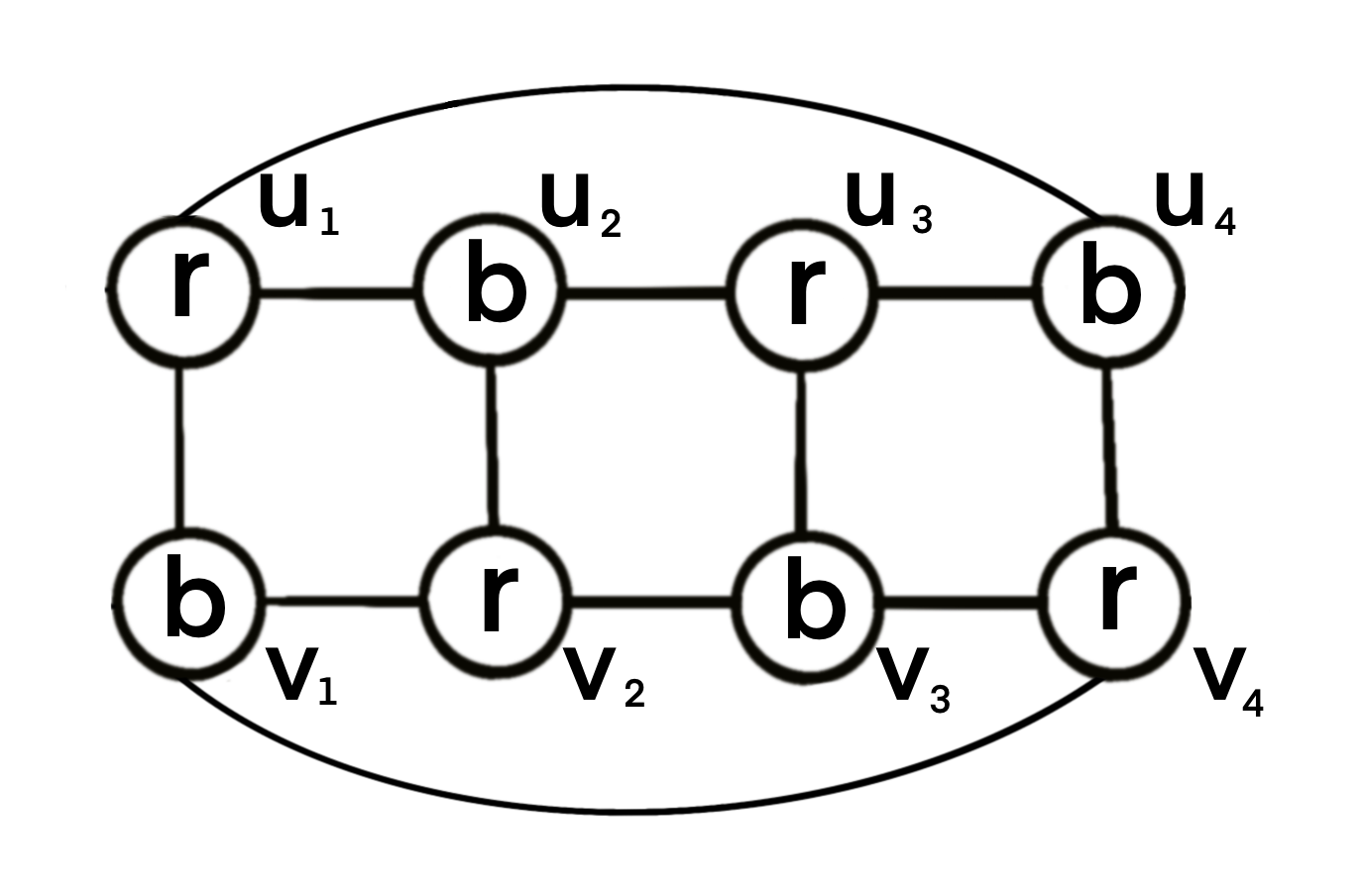}
    \captionof*{figure}{Case 1}
    \end{minipage}\hfill
    \begin{minipage}{0.3\textwidth}
    \includegraphics[width=\linewidth]{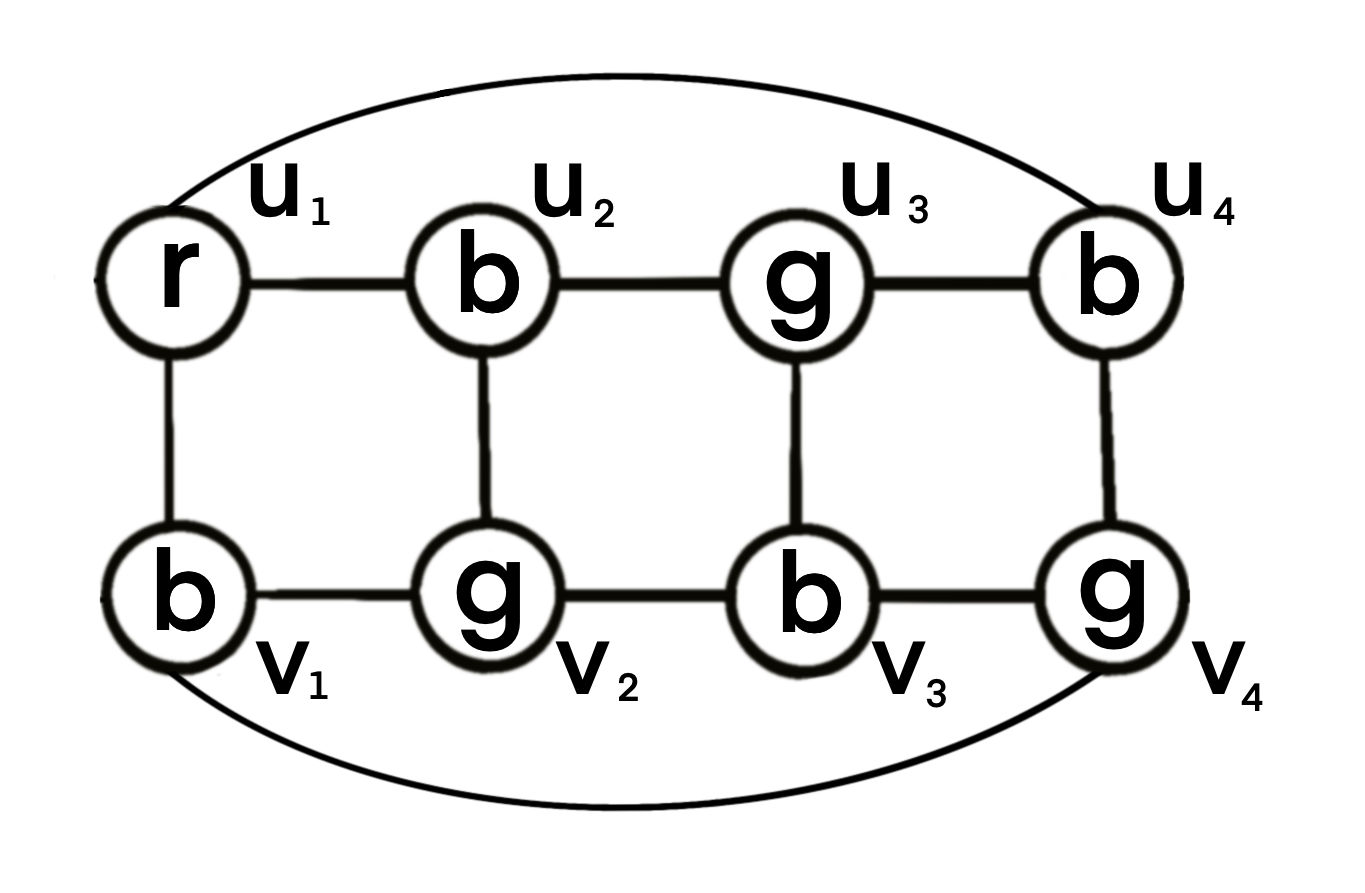} 
    \captionof*{figure}{Case 2}   
    \end{minipage}\hfill
    \begin{minipage}{0.3\textwidth}
    \includegraphics[width=\linewidth]{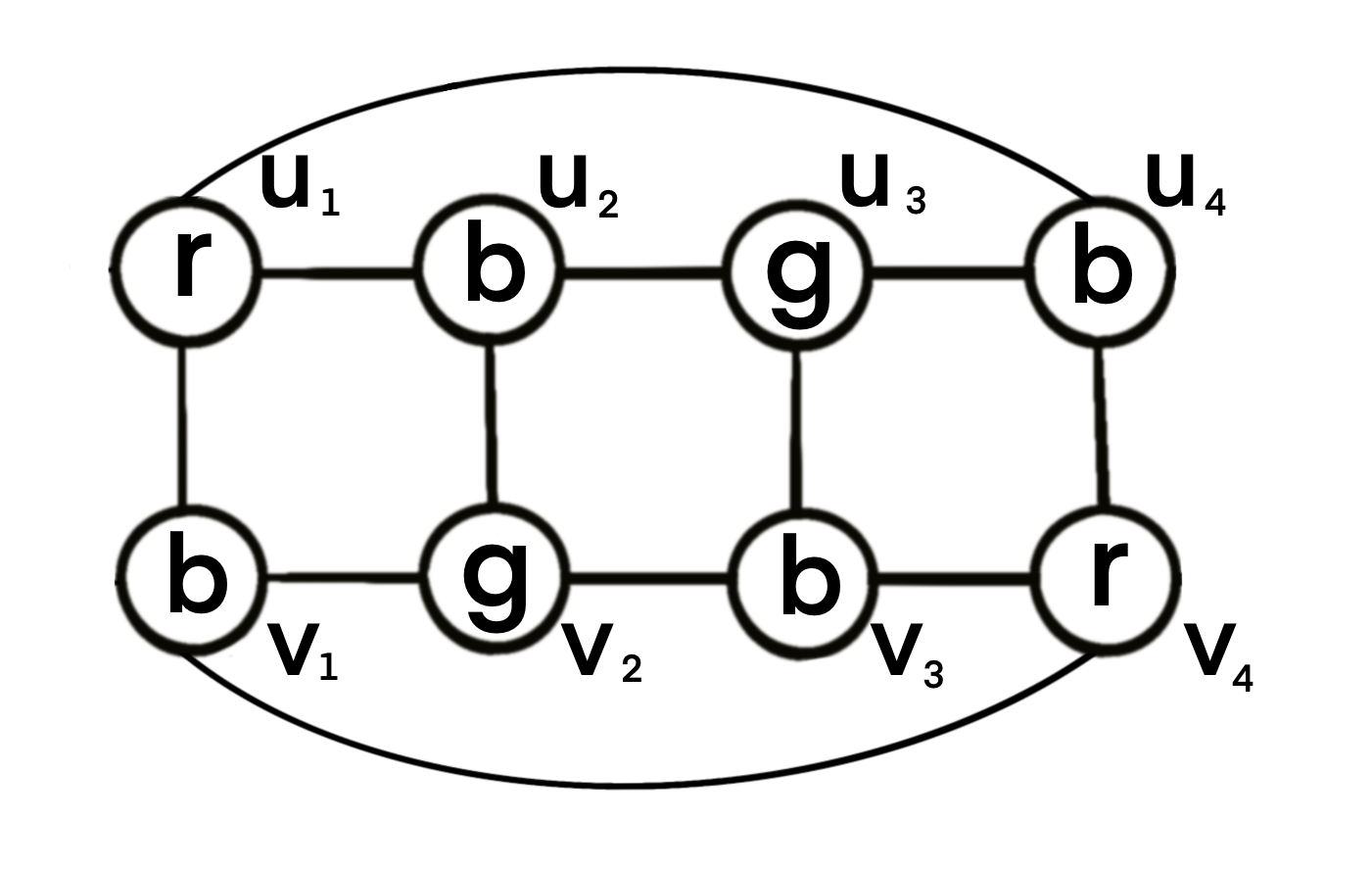} 
    \captionof*{figure}{Case 3}   
    \end{minipage} \\
    \begin{minipage}{0.3\textwidth}
    \includegraphics[width=\linewidth]{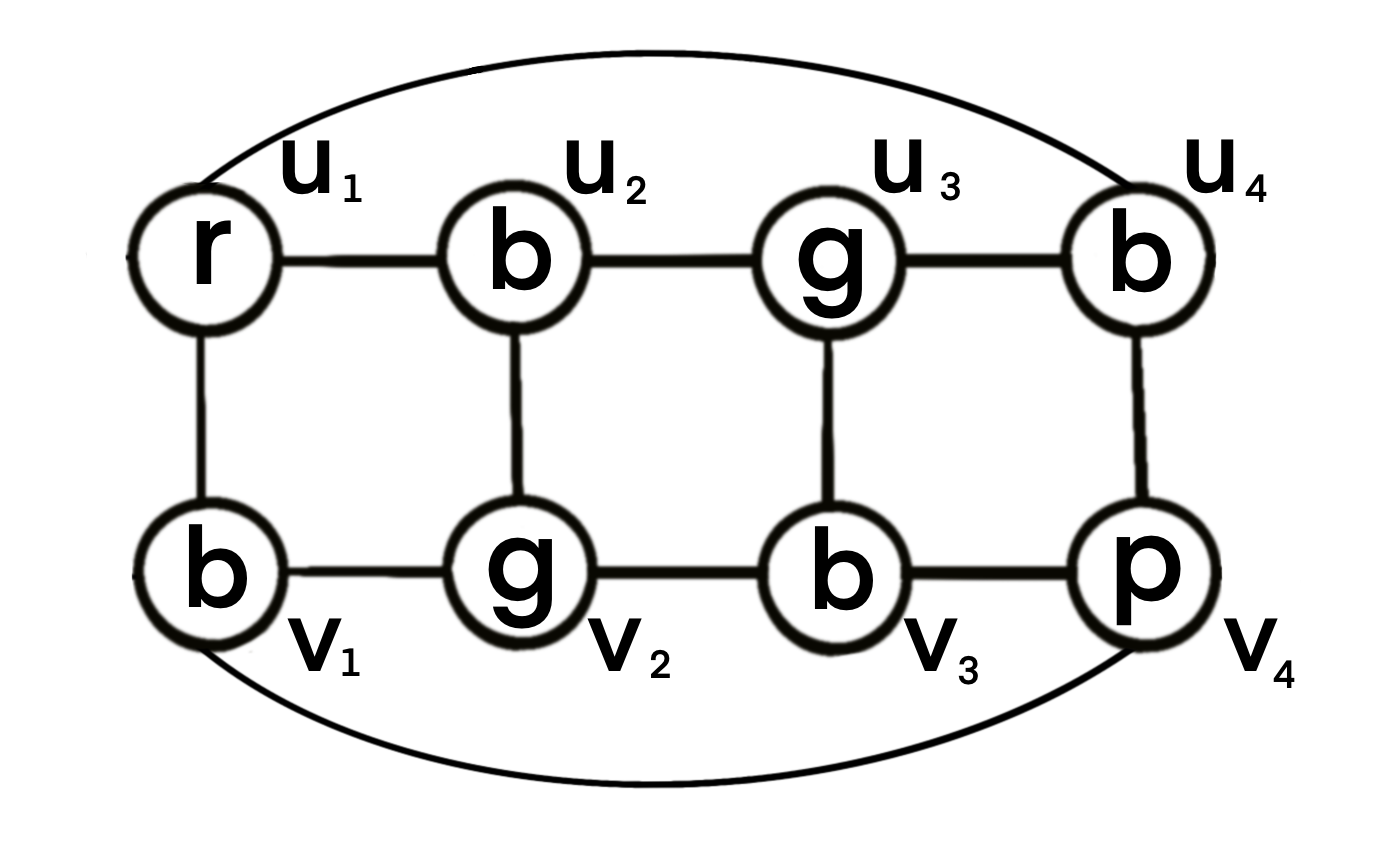} 
    \captionof*{figure}{Case 4}   
    \end{minipage}\hspace{0.1\textwidth}
    \begin{minipage}{0.3\textwidth}
    \includegraphics[width=\linewidth]{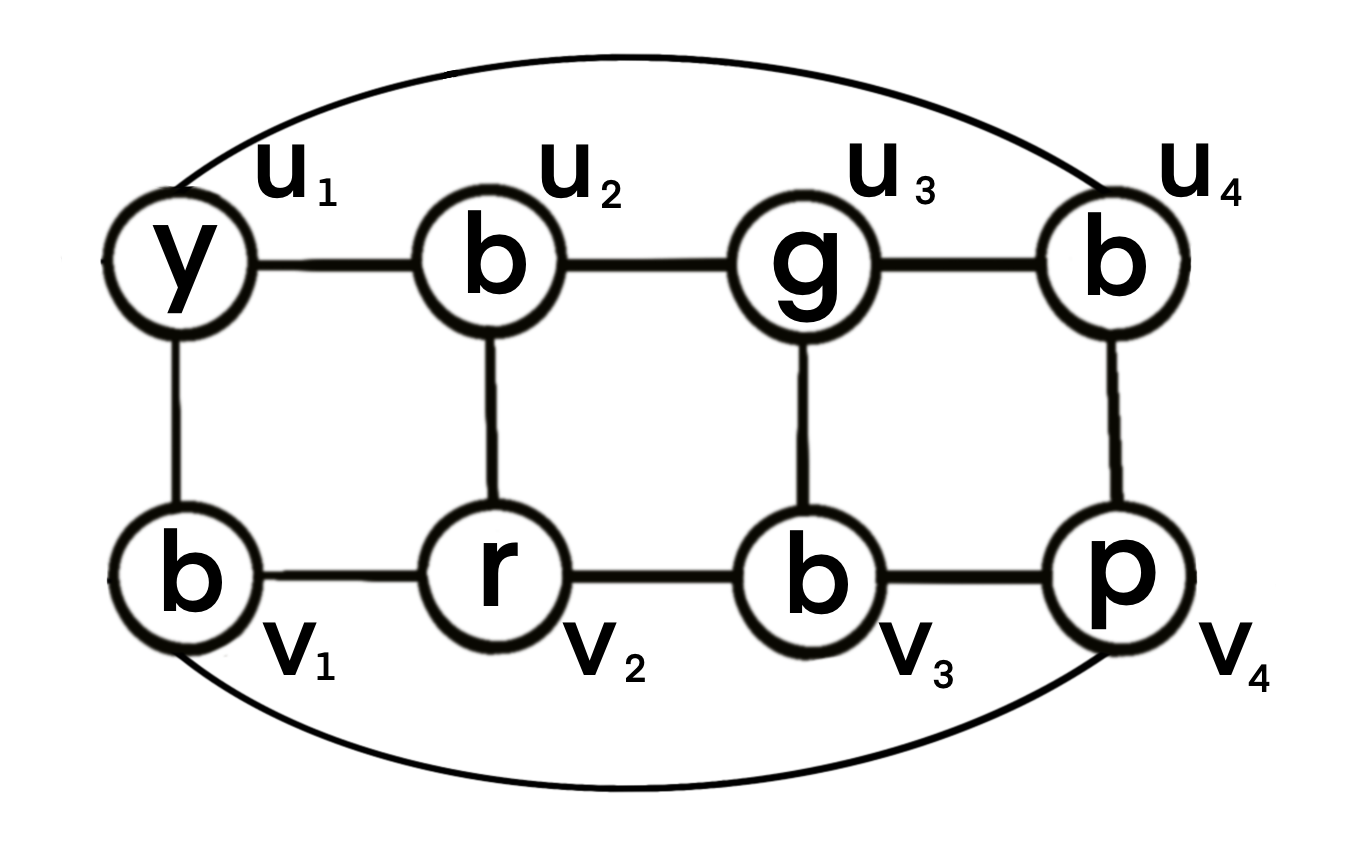} 
    \captionof*{figure}{Case 5}   
    \end{minipage}
\caption{Potential colorings of $\Pi_4$.}
\label{fig:pi4colorings}
\end{center}
\end{figure}
    
    In Case 1 there are two color classes of cardinality 4.  However, lists $L(u_1)$ and $L(v_3)$ must each contain a color which is neither $r$ nor $b$.  (This color may or may not be the same in the two lists.)  Define a new coloring $c'$ in which $c'(u_1), c'(v_3) \not\in\{b,r\}$ and $c'=c$ for all other vertices.  This new coloring $c'$ is a proper $L$-coloring whose largest color class has cardinality 3, so $L$ admits a proper 3-bounded coloring.
    
    In Case 2, list $L(v_3)$ must contain a color which is neither $b$ nor $g$.  Define a new coloring $c'$ in which $c'(v_3)\not\in\{b,g\}$ and $c'=c$ for all other vertices.  This new coloring $c'$ is a proper $L$-coloring whose largest color class has cardinality 3, so $L$ admits a proper 3-bounded coloring.
    
    In Cases 3 and 4, we will attempt to change the color of vertex $u_2$.  If there is a color in $L(u_2)$ which is not $b$, $g$, or $r$, we can obtain a proper 3-bounded $L$-coloring by changing $u_2$ to that color.  If not, then $L(u_2)=\{b,g,r\}$. Fortunately, list $L(u_1)$ must contain a color that is not blue or red.  Define a new coloring $c'$ in which $c'(u_2)=r$, $c'(u_1)\not\in\{b,r\}$, and $c'=c$ for all other vertices.  This new coloring $c'$ is a proper $L$-coloring whose largest color class has cardinality 3.  Thus, in Cases 3 and 4, $L$ always admits a proper 3-bounded coloring.
    
    In Case 5, we will attempt to change the color of vertex $u_2$.  If there is a color in $L(u_2)$ which is not $b$, $r$, $g$, or $y$, we can obtain a proper 3-bounded $L$-coloring by changing $u_2$ to that color.  If not, then $L(u_2)$ must contain $b$ and exactly two of the colors $r$, $g$, and $y$.  Without a loss of generality, suppose $L(u_2)$ contains $y$.  Fortunately, list $L(u_1)$ must contain a color that is not $b$ or $y$.  Define a new coloring $c'$ in which $c'(u_2)=y$, $c'(u_1)\not\in\{b,y\}$, and $c'=c$ for all other vertices.  This new coloring $c'$ is a proper $L$-coloring whose largest color class has cardinality 3.  Thus, in Case 5, $L$ always admits a proper 3-bounded coloring.
    
    In every case, we are able to obtain a proper 3-bounded $L$-coloring.  This means that the arbitrary 3-list-assignment $L$ always admits a proper $3$-bounded coloring, so by definition $\Pi_4$ is equitably 3-choosable.
\end{proof}


\section{Proof of Main Result}
\label{sec:natleast6}

In this section we prove that $\Pi_n$, $n\geq 6$, is equitably 3-choosable.  Together with Lemmas~\ref{lem:35prisms} and~\ref{lem:4prisms}, this constitutes a proof of our main result, Theorem~\ref{thm:mainthm}.  For the remainder of the paper, let $L$ be an arbitrary 3-list-assignment of $\Pi_n$.  We know that a proper $L$-coloring of $\Pi_n$ exists by Theorem~\ref{thm:3choose}, so we'll use a minimum counterexample technique and a discharging argument to prove that $L$ admits a proper $\left\lceil 2n/3\right\rceil$-bounded coloring.

Let $c$ be a proper $L$-coloring of $\Pi_n$, and let $C_1,C_2,\dots,C_r$ be the color classes of $c$.  Further, let $|C_i|=n_i$ for each color class and, without a loss of generality, suppose $n_1 \geq n_2 \geq \cdots \geq n_r$.  The \textbf{color word} of $c$ is the length-$r$ list $w_c := n_1 n_2 \cdots n_r$.  We say $c$ is a \textbf{lex-min coloring} of $\Pi_n$ if its color word $w_c$ is lexicographically minimum.  That is, if $c'$ is another proper $L$-coloring of $\Pi_n$ with color word $w_{c'}=m_1m_2\cdots m_s$, then either $w_{c}=w_{c'}$ or there exists an index $k$ ($1\leq k\leq s$), such that $n_i=m_i$ for $1\leq i \leq k-1$ and $n_k < m_k$.

The following two lemmas describe some useful properties of a lex-min $L$-coloring of $\Pi_n$, $n\geq 6$.

\begin{lemma}\label{lem:difflistsinnoneqlexmin}
    Let $c$ be a proper lex-min $L$-coloring of $\Pi_n$, $n\geq 6$. If the lists in $L$ are all the same, then $c$ is $\left\lceil 2n/3\right\rceil$-bounded. 
\end{lemma}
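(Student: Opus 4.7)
The plan is to reduce the claim to the Chen-Lih-Wu (non-list) equitable $\Delta$-coloring theorem (Theorem~\ref{thm:chenlihwu}). The key observation is that when every vertex has the same list $\{r,g,b\}$, any proper $3$-coloring of $\Pi_n$, after a suitable relabeling of its three colors by $r,g,b$, is automatically a proper $L$-coloring. Since a lex-min $L$-coloring in particular minimizes the size of the largest color class over all proper $L$-colorings, it will suffice to exhibit a single proper $L$-coloring of $\Pi_n$ whose largest color class has size at most $\lceil 2n/3 \rceil$.

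To obtain such a witness, I would verify the hypotheses of Theorem~\ref{thm:chenlihwu} for $\Pi_n$ with $n \geq 6$. The graph is connected with $\Delta(\Pi_n) = 3$, so it remains to rule out the three exceptional families: $\Pi_n$ is not a complete graph, since the only $3$-regular complete graph is $K_4$ while $|V(\Pi_n)| = 2n \geq 12$; it is not an odd cycle, since $|V(\Pi_n)|$ is even; and it is not $K_{2m+1,2m+1}$, because matching the common degree $3$ forces $m=1$, but then $|V(K_{3,3})| = 6 < 2n$. Consequently Theorem~\ref{thm:chenlihwu} yields an equitable proper $3$-coloring $c^*$ of $\Pi_n$ in which every color class has size $\lceil 2n/3 \rceil$ or $\lfloor 2n/3 \rfloor$ (all three colors are actually used, since $\lfloor 2n/3 \rfloor \geq 4 > 0$).

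After relabeling its three colors by $r,g,b$, the coloring $c^*$ becomes a proper $L$-coloring whose color word $w_{c^*}$ has first entry $\lceil 2n/3 \rceil$. Lex-minimality of $c$ then forces $w_c \leq w_{c^*}$ lexicographically, so $n_1 \leq \lceil 2n/3 \rceil$, which is exactly the desired $\lceil 2n/3 \rceil$-boundedness. There is no serious obstacle here: the hypothesis that all lists coincide collapses the list-coloring problem to classical equitable coloring, and the only real content is the routine case check that $\Pi_n$ avoids each of the three Chen-Lih-Wu exceptional families.
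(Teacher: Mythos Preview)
Your proposal is correct and follows essentially the same approach as the paper: both reduce to Theorem~\ref{thm:chenlihwu} by observing that identical lists make $L$-coloring equivalent to ordinary $3$-coloring, then use lex-minimality to bound the largest class of $c$ by the largest class of an equitable $3$-coloring. Your version is slightly more explicit in checking that $\Pi_n$ avoids the exceptional families of Theorem~\ref{thm:chenlihwu} and in spelling out why $w_c \le w_{c^*}$ forces $n_1 \le \lceil 2n/3 \rceil$, but the argument is the same.
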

\begin{proof}
    Suppose $c$ is a proper lex-min $L$-coloring of $\Pi_n$.  If all lists of $L$ are the same, then finding a proper $L$-coloring of $\Pi_n$ is equivalent to finding a proper 3-coloring of $\Pi_n$.
    According to Theorem~\ref{thm:chenlihwu}, $\Pi_n$ is equitably 3-colorable.  That is, there exists a proper 3-coloring of $\Pi_n$ in which each color class has cardinality equal to $\left\lceil 2n/3\right\rceil$ or $\left\lfloor 2n/3\right\rfloor$.  In the lex-min coloring $c$, the largest color class may be even smaller than $\left\lceil 2n/3\right\rceil$.  Therefore, $c$ is $\left\lceil 2n/3\right\rceil$-bounded.
\end{proof}

\begin{lemma}\label{lem:cuses4colors}
    Let $c$ be a proper lex-min $L$-coloring of $\Pi_n$, $n\geq 6$. If $c$ is not $\left\lceil 2n/3\right\rceil$-bounded, then the range of $c$ includes at least 4 colors.
\end{lemma}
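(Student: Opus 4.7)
The plan is to argue the contrapositive: assume the lex-min $L$-coloring $c$ uses at most $3$ colors, and deduce that $c$ is $\lceil 2n/3\rceil$-bounded. Properness of $c$ rules out one color, so I split into the $2$-color case and the $3$-color case.

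First I dispose of the $2$-color case. If $c$ uses exactly two colors, then $\Pi_n$ is bipartite, which forces $n$ to be even and both color classes to coincide with the parts, each of size $n$, giving color word $(n,n)$. Since $|L(v)|=3$ and at most two colors are used by $c$, every vertex $v$ has some $\delta\in L(v)$ not used by $c$; recoloring any single vertex $v$ to $\delta$ produces a proper $L$-coloring with sorted color word $(n,n-1,1)$, strictly lex-smaller than $(n,n)$, which would contradict lex-minimality.

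Next, suppose $c$ uses exactly three colors $\alpha,\beta,\gamma$ with class sizes $a\ge b\ge c$ summing to $2n$, and suppose for contradiction that $a>\lceil 2n/3\rceil$. If every list equals $\{\alpha,\beta,\gamma\}$, then Lemma~\ref{lem:difflistsinnoneqlexmin} directly yields $a\le\lceil 2n/3\rceil$, a contradiction. Otherwise some vertex $v$ has a fourth color $\delta\in L(v)\setminus\{\alpha,\beta,\gamma\}$. The clean subcase is when $v$'s class has size at least $2$: I would recolor $v$ to $\delta$ (this is proper because $\delta$ appears nowhere else), and a routine case check on how $\{a,b,c\}$ changes to $\{a,b,c\}\setminus\{|C_v|\}\cup\{|C_v|-1,1\}$ shows that the resulting sorted color word is strictly lex-smaller than $(a,b,c)$.

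The hard part will be the remaining subcase, where every vertex with a fourth color in its list lies in a singleton class. Since $a\ge 5$ for $n\ge 6$, the only class that can possibly be a singleton is $C_\gamma$ with $c=1$; combining $a\ge b$ with $a\le\alpha(\Pi_n)\le n$ pins down $a=n$ and $b=n-1$, and since an independent set of size $n$ in $\Pi_n$ exists only when $n$ is even, this singleton configuration only occurs for even $n$. Moreover every vertex in $C_\alpha$ must then have list exactly $\{\alpha,\beta,\gamma\}$. I handle this via a two-step recoloring: first recolor the singleton vertex $v$ from $\gamma$ to $\delta$, which frees the color $\gamma$ globally; then pick any $u\in C_\alpha$ and recolor $u$ to $\gamma$, which is still proper since no neighbor of $u$ uses $\gamma$ after the first step. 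The resulting sorted color word is $(n-1,n-1,1,1)$, strictly lex-smaller than $(n,n-1,1)$, contradicting lex-minimality and completing the contrapositive.
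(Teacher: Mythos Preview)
Your argument is correct and shares the paper's core idea: since the lists are not all identical (Lemma~\ref{lem:difflistsinnoneqlexmin}), some vertex $v$ has a color $c_0\in L(v)$ unused by $c$, and recoloring $v$ to $c_0$ should drop the color word. The paper stops there, asserting without further comment that the new color word is strictly lex-smaller.

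What you add is genuine and worthwhile: you notice that the one-step recoloring fails to strictly decrease the color word precisely when $v$ sits in a singleton class, and you dispose of that edge case. Your analysis that this forces $c=1$, hence $a+b=2n-1$, hence $a=n$ (using the rung-matching bound $\alpha(\Pi_n)\le n$) and $n$ even, is clean. The two-step fix---recolor $v\in C_\gamma$ to $\delta$, then recolor any $u\in C_\alpha$ to the now-vacated $\gamma$ (legitimate since every $u\notin C_\gamma$ must have $L(u)=\{\alpha,\beta,\gamma\}$ by hypothesis, and $u$'s neighbors no longer use $\gamma$)---correctly produces the strictly smaller word $(n-1,n-1,1,1)$.

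So: same approach, but your version patches a small gap the paper leaves open. One stylistic note: your ``routine case check'' for the non-singleton subcase is indeed routine, but since this is exactly where the paper is terse, you might spell out in one line that removing one vertex from a class of size $\ge 2$ and creating a new singleton always strictly lowers the first entry of the sorted word at which the two differ.
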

\begin{proof}
    By Lemma~\ref{lem:difflistsinnoneqlexmin}, we know that the lists used to define $c$ are not all the same, so at least 4 colors appear in the union of all the color lists.  Suppose $c$ uses fewer than 4 colors.  
    Then there exists a vertex $v$ such that $L(v)$ contains a color \emph{not} used by $c$, say $c_0$.  Define a coloring $c'$ in which $c'(v)=c_0$ and $c'=c$ for all other vertices.  Since color $c_0$ was not used by coloring $c$, we may conclude that $w_{c'}$ is lexicographically less than $w_c$.  This contradicts the assumption that $c$ is lex-min, so it must be that $c$ uses at least 4 colors.
\end{proof}

The remainder of this section is organized as follows:  We start by assuming that $c$ is a lex-min $L$-coloring of $\Pi_n$, $n\geq 6$, which is not $\left\lceil 2n/3\right\rceil$-bounded. In Section~\ref{subsec:bluereddifferby0or1}, we use reducible configurations and a counting argument to prove that the two largest color classes of $c$ must differ in cardinality by at least 2.  Then in Section~\ref{subsec:blue2bigger}, we use additional reducible configurations and a discharging argument to prove that $c$ must be $\left\lceil 2n/3\right\rceil$-bounded, and so $\Pi_n$, $n\geq 6$, is equitably 3-choosable.

\subsection{Two Large Color Classes}
\label{subsec:bluereddifferby0or1}

In this subsection we prove that if $c$ is lex-min and not $\left\lceil 2n/3\right\rceil$-bounded, then its two largest color classes must differ in size by at least 2.  To simplify the notation, let's say that Blue is the most common color and Red is the second most common color.  We assume that $|\text{Blue}|\geq \left\lceil 2n/3\right\rceil +1$ and either $|\text{Red}|=|\text{Blue}|$ or $|\text{Red}|=|\text{Blue}|-1$.  First, we determine how much larger $|\text{Red}|$ and $|\text{Blue}|$ are than the other color classes.

\begin{lemma}\label{lem:blueredatleast2bigger}
    If $c$ is lex-min and not $\left\lceil 2n/3\right\rceil$-bounded, and $|\text{Red}|=|\text{Blue}|$ or $|\text{Red}|=|\text{Blue}|-1$, then $|\text{Blue}|$ and $|\text{Red}|$ are at least 2 larger than every color class.
\end{lemma}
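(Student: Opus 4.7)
The plan is to derive the conclusion by a pure counting contradiction, with no recoloring argument needed; the only non-trivial ingredient is Lemma~\ref{lem:cuses4colors}. Let $X$ denote any color class of $c$ distinct from Blue and Red. Since $|\text{Blue}| \geq |\text{Red}| \geq |X|$, it suffices to show $|X| \leq |\text{Red}| - 2$, and I would prove this by contradiction, assuming instead that $|X| \geq |\text{Red}| - 1$.

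I would then split along the two hypothesized cases. If $|\text{Red}| = |\text{Blue}|$, then $|\text{Blue}| + |\text{Red}| + |X| \geq 3|\text{Blue}| - 1$; combining with $|\text{Blue}| \geq \lceil 2n/3 \rceil + 1$ and the elementary fact that $3\lceil 2n/3 \rceil \geq 2n$, this sum is at least $2n + 2$. If instead $|\text{Red}| = |\text{Blue}| - 1$, then $|\text{Blue}| + |\text{Red}| + |X| \geq 3|\text{Blue}| - 3 \geq 3\lceil 2n/3 \rceil \geq 2n$.

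On the other hand, Lemma~\ref{lem:cuses4colors} guarantees that $c$ uses at least four colors, so there is at least one nonempty color class beyond Blue, Red, and $X$. Since the color classes of $c$ partition $V(\Pi_n)$ and $|V(\Pi_n)| = 2n$, this forces $|\text{Blue}| + |\text{Red}| + |X| \leq 2n - 1$, contradicting both of the lower bounds obtained above.

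The only real obstacle is arithmetic bookkeeping: the second case is nearly tight, matching $2n$ exactly when $n \equiv 0 \pmod 3$, and it is precisely the existence of a fourth nonempty color class (from Lemma~\ref{lem:cuses4colors}) that shaves off the extra unit needed to produce a contradiction. No reducible-configuration or discharging machinery is required for this lemma; that heavier machinery is deferred to the later steps of the argument.
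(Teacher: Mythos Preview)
Your proof is correct and takes essentially the same approach as the paper: both are pure counting arguments using only that $|\text{Blue}|\geq\lceil 2n/3\rceil+1$, the hypothesized relation between $|\text{Red}|$ and $|\text{Blue}|$, the existence of a fourth nonempty color class from Lemma~\ref{lem:cuses4colors}, and the fact that the color classes partition $2n$ vertices. The paper bounds a third class $C$ directly via $|C|\leq 2n-|\text{Blue}|-|\text{Red}|-1$, while you reach the same contradiction by summing three classes; the content is identical.
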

\begin{proof}
    Recall that by Lemma~\ref{lem:cuses4colors}, $c$ has at least 4 nonempty color classes.  Suppose $|\text{Red}|=|\text{Blue}|-1$, and let $C$ denote an arbitrary third color class.  Then because $\Pi_n$ has $2n$ vertices and $|\text{Blue}|\geq \left\lceil 2n/3\right\rceil +1$, we may conclude that
    \begin{align*}
        |C| &\leq 2n - |\text{Blue}|-|\text{Red}| - 1 \\
            &\leq 2n - 2\left\lceil 2n/3\right\rceil -2 \\
            &\leq 2n/3 -2\\
            &\leq \left\lceil 2n/3\right\rceil -2
    \end{align*}
    Note that $\left\lceil 2n/3\right\rceil -2 \leq |\text{Blue}|-3$ and $\left\lceil 2n/3\right\rceil -2 \leq |\text{Red}|-2$.
    So when $|\text{Red}|=|\text{Blue}|-1$, Blue contains at least 3 more vertices and Red contains at least 2 more vertices than every other color class.
    
    A similar bounding argument shows that when $|\text{Red}|=|\text{Blue}|$, Red and Blue both contain at least 4 more vertices than every other color class.
\end{proof}

Next, we identify four color configurations that cannot occur in $c$.  Note that in these diagrams, a vertex that is not colored in could have any color which is not already in use by one of its neighbors.
\begin{figure}[!htb]
\begin{center}
    \begin{minipage}{0.3\textwidth}\begin{center}
    \includegraphics[width=0.75\linewidth]{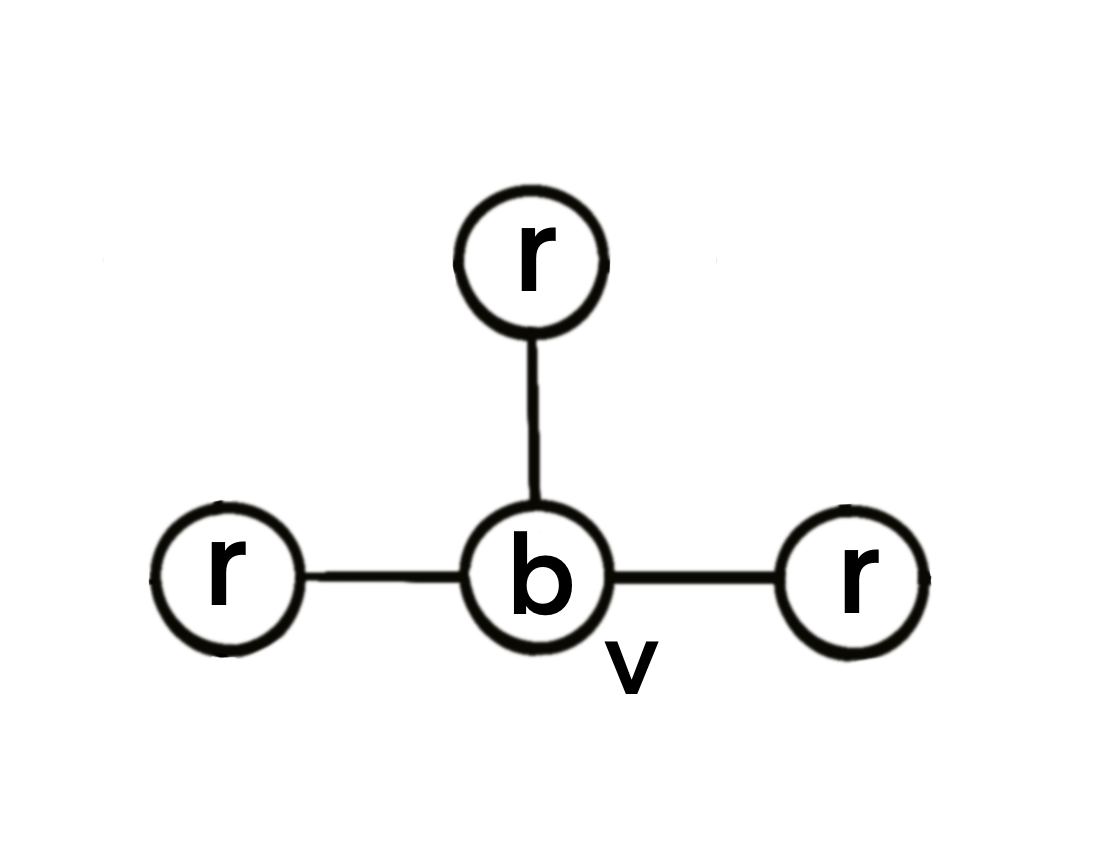}
    \captionof*{figure}{Configuration $F_1$}\end{center}
    \end{minipage}\hspace{0.1\textwidth}
    \begin{minipage}{0.3\textwidth}\begin{center}
    \includegraphics[width=0.75\linewidth]{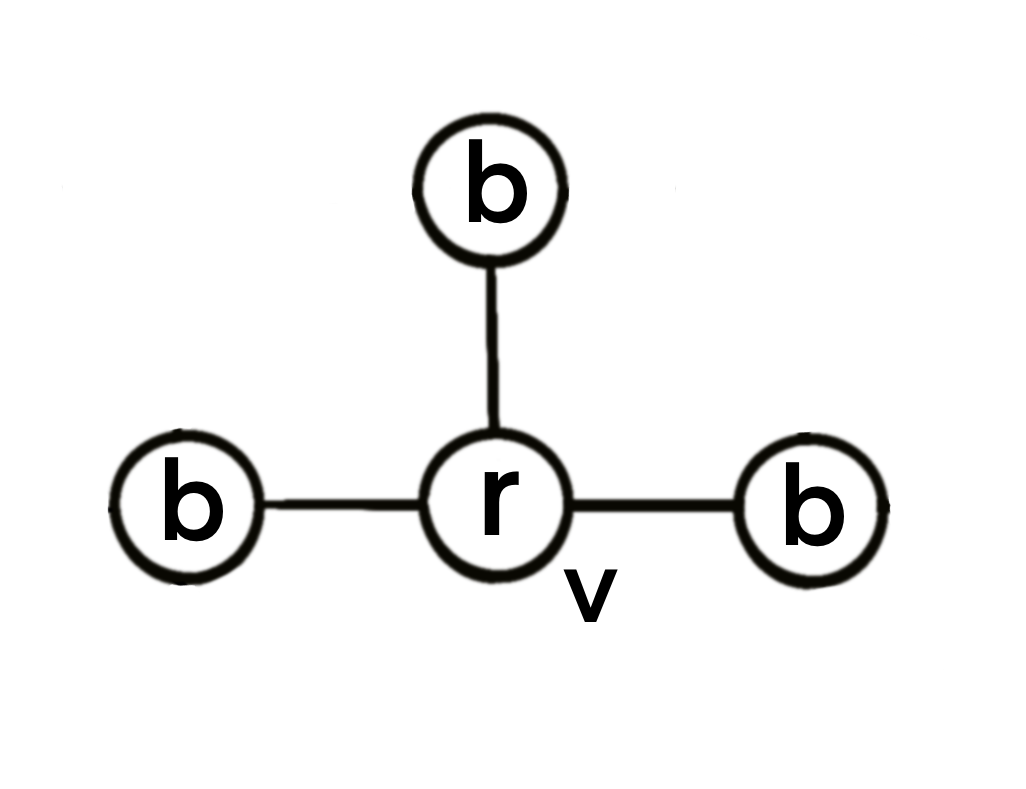} 
    \captionof*{figure}{Configuration $F_2$}\end{center}  
    \end{minipage}
    
    \phantom{line}
    
    \begin{minipage}{0.3\textwidth}
    \includegraphics[width=\linewidth]{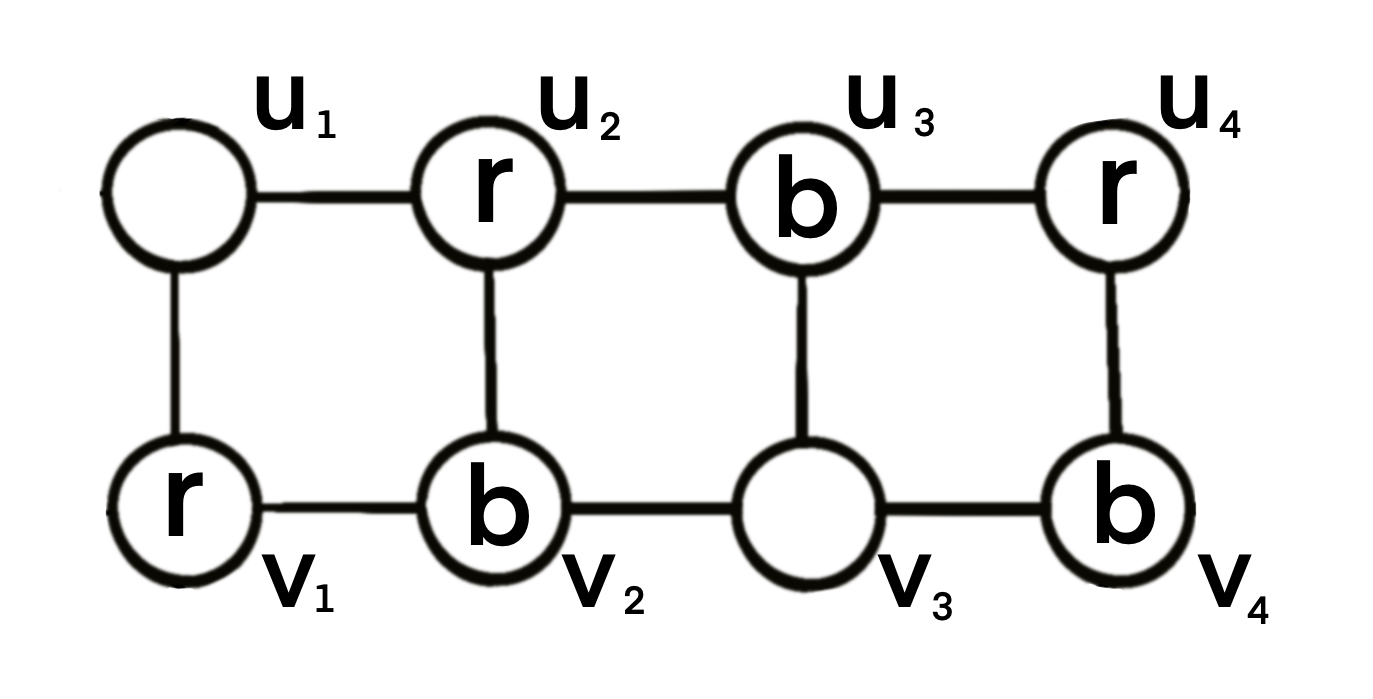} 
    \captionof*{figure}{Configuration $F_3$}   
    \end{minipage}\hspace{0.1\textwidth}
    \begin{minipage}{0.3\textwidth}
    \includegraphics[width=\linewidth]{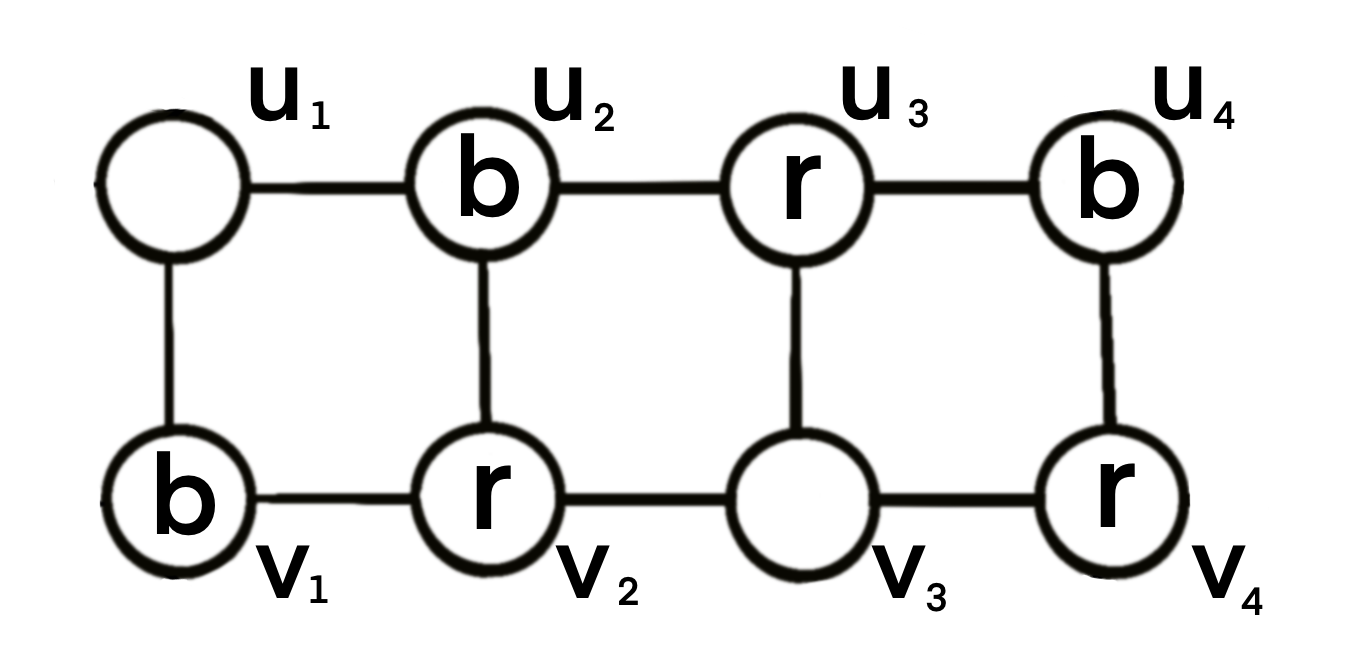} 
    \captionof*{figure}{Configuration $F_4$}   
    \end{minipage}
\caption{Reducible configurations for Lemma~\ref{lem:reducibleconfigsDiff0or1}}
\label{fig:reducibleconfigs1}
\end{center}
\end{figure}

\begin{lemma}\label{lem:reducibleconfigsDiff0or1}
    If $c$ is lex-min and not $\left\lceil 2n/3\right\rceil$-bounded, and $|\text{Red}|=|\text{Blue}|$ or $|\text{Red}|=|\text{Blue}|-1$, then the color configurations in Figure~\ref{fig:reducibleconfigs1} do not occur in $c$.
\end{lemma}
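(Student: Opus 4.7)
The plan is a standard reducibility argument by contradiction. Suppose one of the configurations $F_1, F_2, F_3, F_4$ appears in $c$. For each $F_i$ I will exhibit a local recoloring producing a proper $L$-coloring $c'$ whose color word $w_{c'}$ is lexicographically strictly smaller than $w_c$, contradicting the lex-minimality of $c$.

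The central leverage is Lemma~\ref{lem:blueredatleast2bigger}: every color class other than Blue and Red is at least 2 vertices smaller than each of Blue and Red. Consequently, any modification that decreases $|\text{Blue}|$ by $1$ (or decreases $|\text{Red}|$ by $1$ in the case $|\text{Red}|=|\text{Blue}|$) while transferring the vertex to a class other than Blue and Red produces a lex-smaller word: either the new top entry is strictly smaller than $|\text{Blue}|$, or it equals $|\text{Blue}|$ but the new second entry strictly drops. So the task reduces to finding, in each $F_i$, a proper recoloring of at least one Blue or Red vertex to a color that is neither Blue nor Red.

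For each configuration I would execute the following procedure. Identify a distinguished Blue or Red vertex $v$ whose neighborhood is pinned down by $F_i$. Because $|L(v)|=3$ and $v$ has three neighbors in $\Pi_n$, first attempt to choose $x \in L(v)\setminus\{\text{Blue},\text{Red}\}$ that is not used on any neighbor of $v$, and recolor $v$ to $x$. If every element of $L(v)\setminus\{\text{Blue},\text{Red}\}$ already appears on a neighbor, then the configuration forces a very rigid pattern of neighbor-colors; in that case extend the reduction by one step, using a neighbor $u$ of $v$ whose list $L(u)$ has size $3$ and therefore admits a color outside the locally forbidden set, yielding a two-step Kempe-style swap that first vacates a color at $v$ and then fills $v$. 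The four configurations $F_1$–$F_4$ are presumably drawn so that in each one at least one such rescue is available.

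The main obstacle will be the innermost subcase, in which $L(v)$ and the list of the critical neighbor both turn out to equal $\{\text{Blue},\text{Red},y\}$ for a single third color $y$, and $y$ is already occupied by a common neighbor. Here I expect to exploit the specific structure of $\Pi_n$ — the two $4$-cycles that share a rung containing $v$ — to push the swap either around the near $4$-cycle or across the adjacent rung, branching on whether $y$ sits on a cycle-neighbor or the rung-neighbor. Since $\Pi_n$ is $3$-regular, each vertex has exactly three list entries against exactly three neighbors, so the freedom is tight; however, the $\ge 2$ cushion furnished by Lemma~\ref{lem:blueredatleast2bigger} absorbs the redistribution in every branch, so a short finite case analysis in each $F_i$ will produce the contradictory lex-smaller coloring.
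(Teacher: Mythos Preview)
Your strategy is the paper's strategy: local recoloring to contradict lex-minimality, leaning on Lemma~\ref{lem:blueredatleast2bigger} for the cushion. For $F_1$ and $F_2$ you have it exactly --- the distinguished vertex $v$ has all three neighbors colored from $\{b,r\}$, so any third color in $L(v)$ gives a proper recoloring that drops $|\text{Blue}|$ or $|\text{Red}|$ by one. (Minor note: dropping $|\text{Red}|$ by one also works when $|\text{Red}|=|\text{Blue}|-1$, not only when they are equal; the $\ge 2$ gap to every other class still forces the second entry of the word to decrease.)

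Where your sketch is thin is $F_3$ and $F_4$. A single two-step swap does not finish these. In the paper's argument one must first attempt to recolor \emph{three} separate Blue/Red vertices ($u_2$, $u_3$, $v_2$), and failure at all three pins down all three lists simultaneously: $L(u_2)=\{b,r,c(u_1)\}$ and $L(u_3)=L(v_2)=\{b,r,c(v_3)\}$, forcing in particular $c(v_3)\notin\{b,r\}$. The reduction is then a coordinated \emph{four}-vertex recoloring: pick $c_0\in L(v_3)\setminus\{b,c(v_3)\}$ (for $F_3$; use $\setminus\{r,c(v_3)\}$ for $F_4$), set $c'(v_3)=c_0$, $c'(u_3)=c'(v_2)=c(v_3)$, and $c'(u_2)=b$. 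Because this cascade both removes and re-inserts a blue vertex, the lex-comparison is not the one-line ``Blue drops by one'' argument you describe; you must split on whether $c_0\in\{b,r\}$ and track the net change in both $|\text{Blue}|$ and $|\text{Red}|$. Your anticipated ``innermost subcase'' discussion is gesturing at this, but the plan as written (vacate a color at $v$, then fill $v$) is one vertex short of what is actually required.
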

\begin{proof}
    In configurations $F_1$ and $F_2$, notice that $L(v)$ contains a color which is neither $r$ nor $b$.  Define a new coloring $c'$ in which $c'(v)\not\in\{b,r\}$ and $c'=c$ for all other vertices.  Because either $|\text{Red}|$ or $|\text{Blue}|$ decreases by 1 and a third color class increases by 1, $w_{c'}$ is lexicographically less than $w_c$.  This is a contradiction, so configurations $F_1$ and $F_2$ cannot occur in the lex-min coloring $c$.
    
    In configurations $F_3$ and $F_4$, we attempt to recolor one of vertices $u_2$, $u_3$, and $v_2$.  If one of these vertices' lists contains a color other than $r$, $b$, and the blank neighbor's color, then we may recolor that vertex and obtain a lexicographically smaller color word.  This is a contradiction, so it must be that $L(u_2)=\{b,r,c(u_1)\}$, and $L(u_3)=L(v_2)=\{b,r,c(v_3)\}$.  Note that this implies $c(v_3)$ is neither $r$ nor $b$.
    
    In $F_3$, there must be a color $c_0$ in $L(v_3)$ such that $c_0\not\in\{b,c(v_3)\}$.  Define a new coloring $c'$ in which $c'(v_3)=c_0$, $c'(u_3)=c'(v_2)=c(v_3)$, $c'(u_2)=b$, and $c'=c$ for all other vertices.  If $c_0=r$, then $|\text{Red}|$ stays the same, $|\text{Blue}|$ decreases by 1, and the color class corresponding to $c(v_3)$ increases by 1.  If $c_0\ne r$, then $|\text{Red}|$ and $|\text{Blue}|$ both decrease by 1, and the color classes corresponding to $c(v_3)$ and $c'(v_3)$ both increase by 1.  Either way, $w_{c'}$ is lexicographically less than $w_c$. This contradicts the assumption that $c$ was a lex-min coloring, so $F_3$ cannot occur in $c$.
    
    The argument for $F_4$ is similar.  We let $c_0$ denote a color in $L(v_3)$ such that $c_0\not\in\{r,c(v_3)\}$, then we define $c'$ in the same way. To determine that $w_{c'}$ is lexicographically less than $w_c$, we consider cases $c_0=b$ and $c_0\ne b$.  In both cases, we contradict the assumption that $c$ was lex-min, so $F_4$ cannot occur in $c$.
\end{proof}

\begin{lemma}\label{lem:6consecrungs}
    If $c$ is lex-min and not $\left\lceil 2n/3\right\rceil$-bounded, then there does not exist a subgraph of 6 consecutive rungs of $\Pi_n$, $n\geq 6$, in which 9 or more of the vertices are colored red or blue.
\end{lemma}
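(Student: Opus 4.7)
The plan is to argue by contradiction. Suppose that in the lex-min coloring $c$ there exist $6$ consecutive rungs whose $12$ vertices include at least $9$ colored red or blue. Label the top vertices of these rungs $u_1, \ldots, u_6$ and the bottom vertices $v_1, \ldots, v_6$; then at most $3$ vertices of the block receive colors outside $\{r, b\}$, which I will call \emph{defects}.

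The first observation is the rigidity of the R/B pattern. Since any two adjacent R/B-colored vertices must receive different colors from $\{r, b\}$, the R/B-colored vertices induce a subgraph that admits a proper $2$-coloring by $\{r, b\}$, and this $2$-coloring is forced up to the $r \leftrightarrow b$ swap on each connected piece. Thus the block differs from the natural $r/b$ checkerboard on $P_6 \square K_2$ only in the positions of the (at most three) defect vertices.

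The second step is a case analysis on the number and placement of the defects, simplified by the symmetries of the block---rotation along the six-rung direction, top-bottom reflection across the rungs, and the $r \leftrightarrow b$ swap. For each inequivalent placement with $0$, $1$, $2$, or $3$ defects, I will locate one of $F_1, F_2, F_3, F_4$ inside the block, contradicting Lemma~\ref{lem:reducibleconfigsDiff0or1}. When there are few defects, the long uninterrupted R/B checkerboard supplies an $F_1$ or $F_2$; when defects sit adjacent to the checkerboard region, the resulting local color pattern (with the defect playing the role of a vertex outside $\{r,b\}$) matches an $F_3$ or $F_4$.

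The main obstacle will be cleanly enumerating the inequivalent defect placements and matching each to the correct $F_i$, since the $F_i$ each specify not just a local color pattern but also list constraints at several vertices. The $9$-out-of-$12$ count guarantees that enough of the block is rigidly an $r/b$ alternation that some $F_i$ must appear, but verifying this case by case is where the bookkeeping lives. The hardest subcases are those in which the defects cluster on a single rung or on consecutive positions of one path, since then the reducible structure must be located in the portion of the block farther from the cluster. Once every case is checked, each yields a contradiction with Lemma~\ref{lem:reducibleconfigsDiff0or1}, completing the argument.
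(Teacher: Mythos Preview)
Your overall strategy---contradiction, then case analysis on the placement of the at most three non-$\{r,b\}$ ``defects'' up to symmetry---matches the paper's, and your observation that the $\{r,b\}$-colored vertices are forced into a checkerboard on each connected piece is exactly the rigidity the paper exploits. However, the core claim of your plan is that in \emph{every} placement you can exhibit one of $F_1,F_2,F_3,F_4$ and finish via Lemma~\ref{lem:reducibleconfigsDiff0or1}. This is not true, and it is precisely where the paper's proof does real work that your sketch omits.

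Concretely, take the five-blue, four-red arrangement the paper isolates as Case~4: blue on $u_1,v_2,u_3,v_5,u_6$, red on $v_1,v_3,u_5,v_6$, with $u_2,u_4,v_4$ all outside $\{r,b\}$. You can check that no $F_1$ or $F_2$ sits inside the block (every blue or red vertex in the interior has a defect neighbor, and the endpoints have a neighbor outside the block), and the paper has already used $F_3$/$F_4$ to eliminate the \emph{other} four-red placements before arriving at this one; what remains is genuinely $F_i$-free. The paper disposes of it by a separate argument: it pins down $L(u_5),L(v_3),L(v_5)$, then $L(u_4)$ and $L(v_4)$, and finally exhibits an explicit recoloring contradicting lex-minimality. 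Case~5 requires a similar bespoke list computation. Your plan, as written, has no mechanism for these residual configurations---you would reach them in your enumeration and find no $F_i$ to point to. (A minor side note: the $F_i$ are purely color configurations; the list constraints you mention are consequences derived inside the proof of Lemma~\ref{lem:reducibleconfigsDiff0or1}, not part of the hypotheses, so ``matching'' only requires matching colors.) To repair the proposal, you need to anticipate that a handful of three-defect placements survive the $F_i$ screen and budget for direct lex-min arguments there.
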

\begin{proof}
    Suppose there is a subgraph $H$ of 6 consecutive rungs of $\Pi_n$, $n\geq 6$, in which 9 or more of the vertices are colored red or blue.  $H$ cannot contain 6 blue vertices and 3 or more red vertices because then reducible configuration $F_2$ would occur.  Similarly, if $H$ contained 6 red vertices and at least 3 blue vertices, reducible configuration $F_1$ would occur.  So the only possible counts for red and blue vertices are: 5 blue, 4 red; 5 blue, 5 red; and 4 blue, 5 red.
    
    Suppose $H$ contains 5 blue vertices.  Then $H$ looks like one of the diagrams in Figure~\ref{fig:6rungsubgraphs}.  Note that the blank vertices cannot be blue and must be properly colored.
    \begin{figure}[!ht]
    \begin{center}
    \begin{minipage}{0.32\textwidth}
    \includegraphics[width=\linewidth]{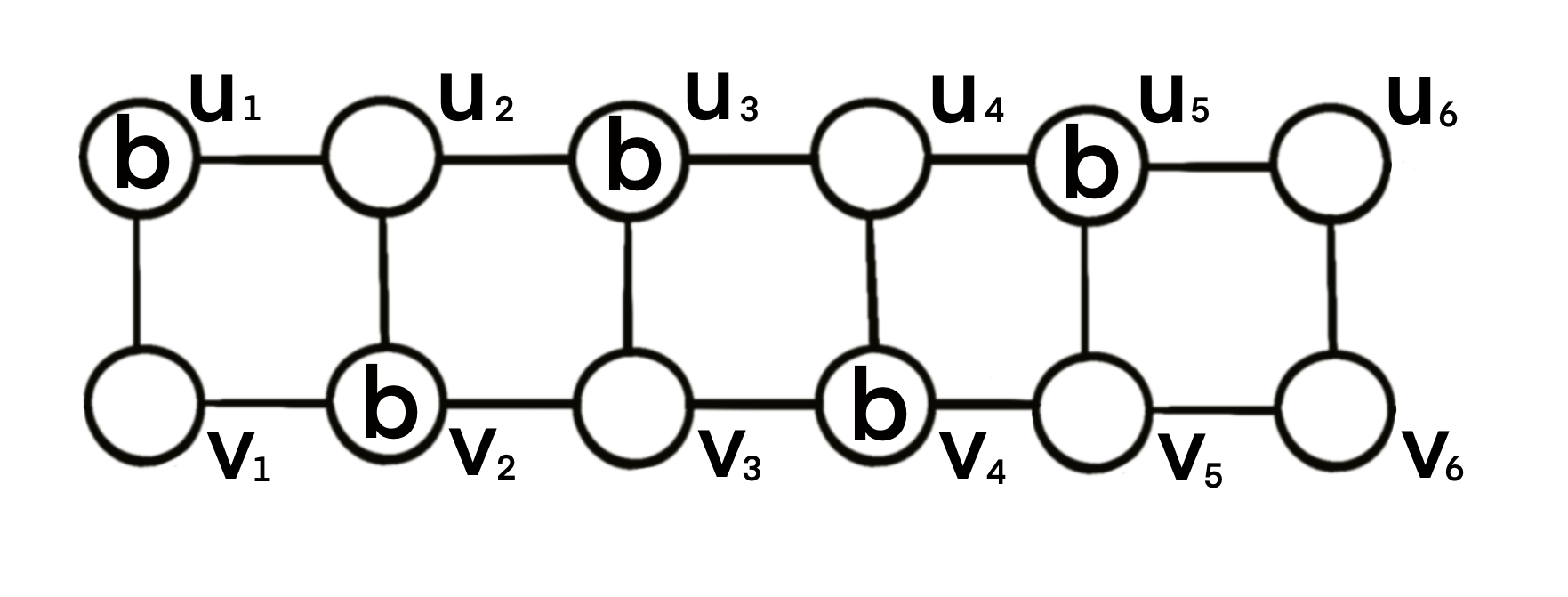}
    \captionof*{figure}{Case 1}
    \end{minipage}\hfill
    \begin{minipage}{0.32\textwidth}
    \includegraphics[width=\linewidth]{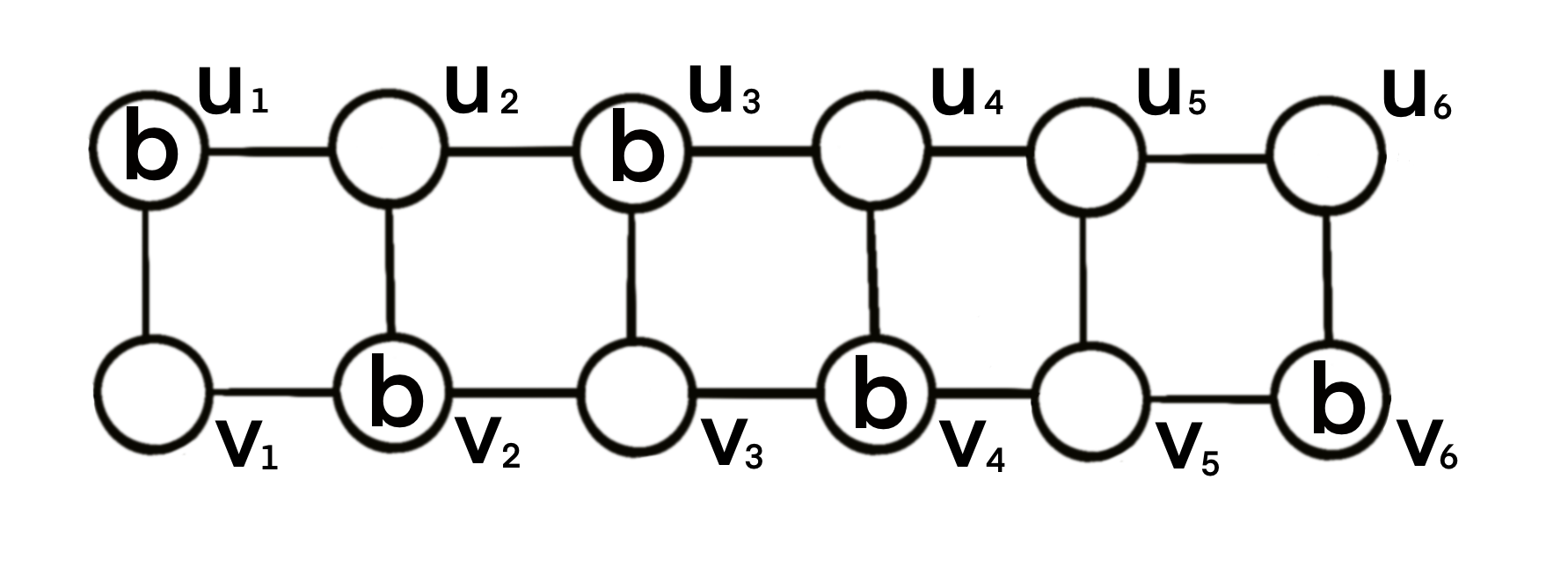} 
    \captionof*{figure}{Case 2}   
    \end{minipage}\hfill
    \begin{minipage}{0.32\textwidth}
    \includegraphics[width=\linewidth]{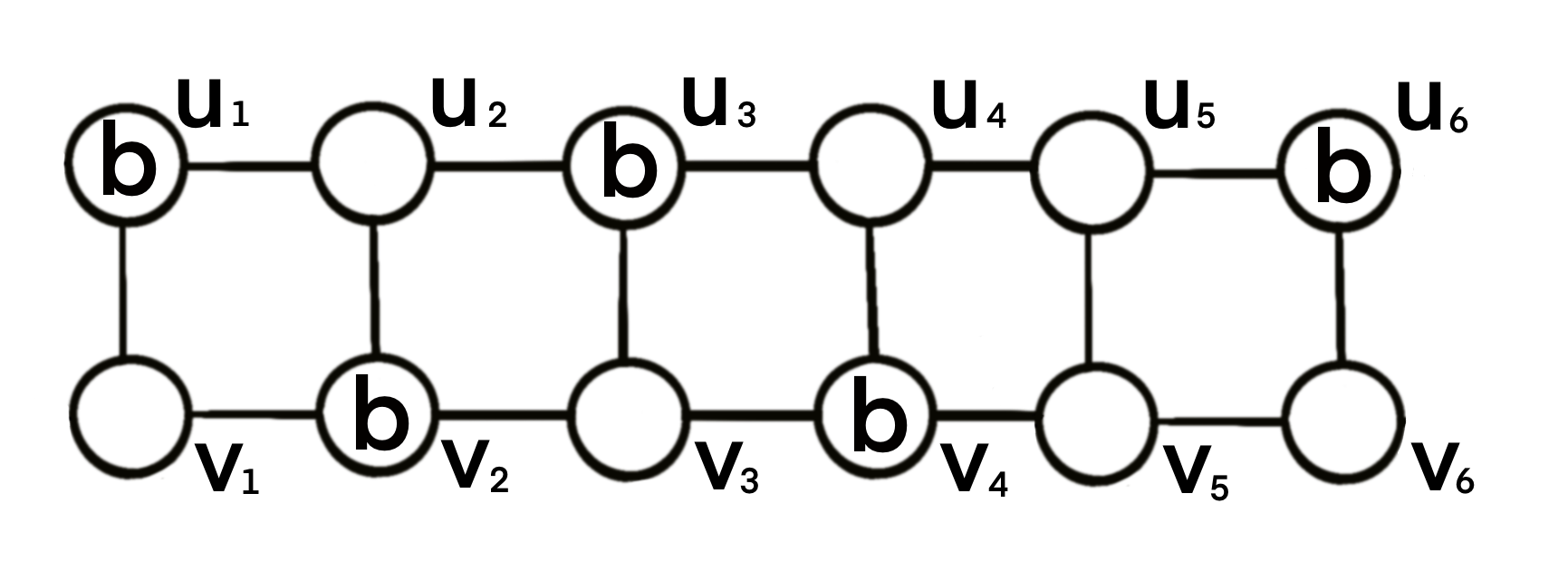} 
    \captionof*{figure}{Case 3}   
    \end{minipage} \\
    \begin{minipage}{0.32\textwidth}
    \includegraphics[width=\linewidth]{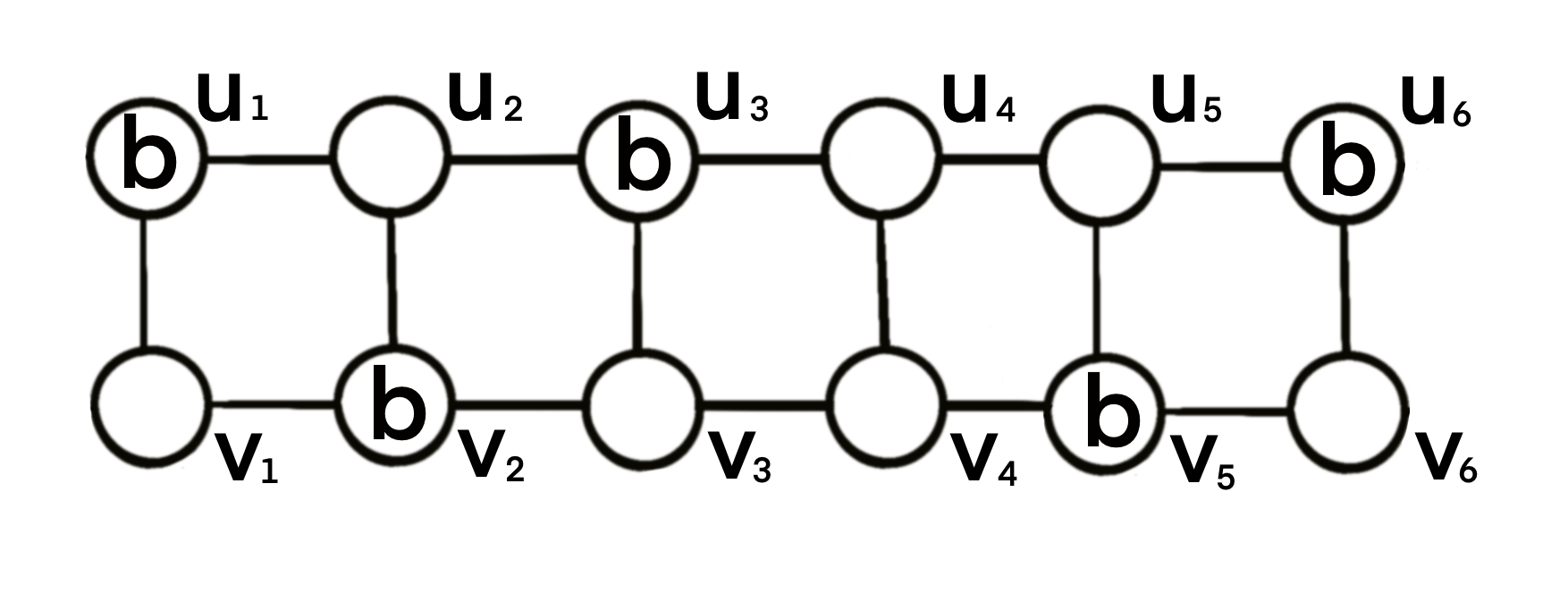} 
    \captionof*{figure}{Case 4}   
    \end{minipage}\hspace{0.1\textwidth}
    \begin{minipage}{0.32\textwidth}
    \includegraphics[width=\linewidth]{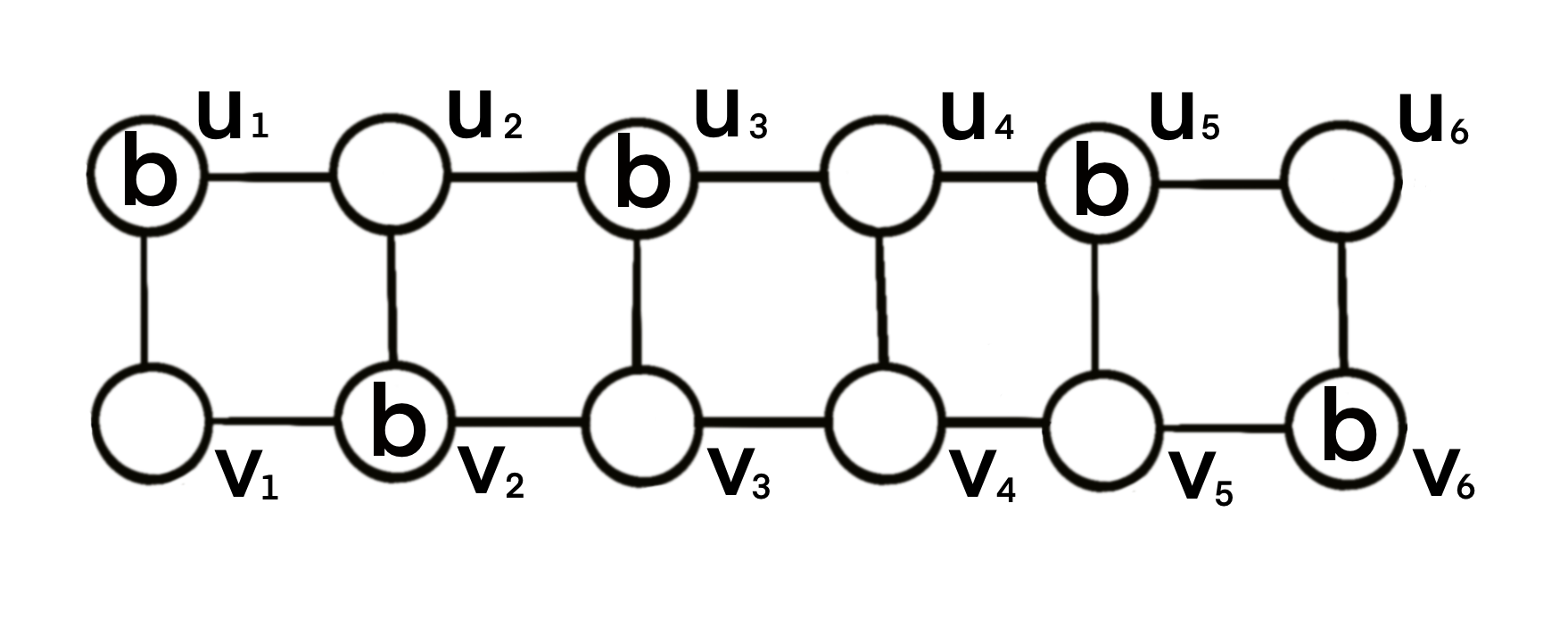} 
    \captionof*{figure}{Case 5}   
    \end{minipage}
\caption{Potential colorings of 6 consecutive rungs.}
\label{fig:6rungsubgraphs}
\end{center}
\end{figure}
    
    In Case 1, the blue vertices are arranged in 5 consecutive rungs.  If 4 of the blank vertices are colored red, and the coloring is proper, then at least one of $u_2$, $u_4$, and $v_3$ would be forced to be red.  However, this creates a copy of the reducible configuration $F_2$, so this arrangement of blue vertices cannot occur.
    
    In Case 2, the blue vertices are arranged in 4 consecutive rungs with the fifth blue vertex being $v_6$.  If $u_2$ or $v_3$ is red, then reducible configuration $F_2$ occurs, which is a contradiction. So if 4 of the blank vertices are colored red, it must be that $v_1$, $u_4$, $v_5$, and $u_6$ are red.  However, this creates a copy of the reducible configuration $F_4$, so this arrangement of blue vertices cannot occur.
    
    In Case 3, the blue vertices are arranged in 4 consecutive rungs with the fifth blue vertex being $u_6$.  If 4 of the blank vertices are colored red, and the coloring is proper, then at least one of $u_2$ and $v_3$ would be forced to be red.  However, this creates a copy of $F_2$, so this arrangement of blue vertices cannot occur.
    
    In Case 4, the blue vertices are arranged in 3 consecutive rungs, with the fourth and fifth blue vertices being $v_5$ and $u_6$. To avoid $F_2$, $u_2$ cannot be red.  The largest independent set of vertices which are neither blue nor $u_2$ is 4, so there cannot be 5 red vertices in this case.  If $v_1$, $v_3$, $u_4$, and $v_6$ are red, a copy of $F_3$ is created, which is a contradiction.  If $v_1$, $v_4$, $u_5$, and $v_6$ are red, a copy of $F_1$ is created, which is a contradiction.  So it must be that $v_1$, $v_3$, $u_5$, and $v_6$ are the only red vertices in this case.  Notice that $u_4$ and $v_4$ are neither red nor blue, and $c(u_4)\ne c(v_4)$.  An updated diagram for Case 4 is shown in Figure~\ref{fig:updatedcase4}.  For simplicity, we will say $c(u_4)=g$ and $c(v_4)=y$.
    \begin{figure}[!ht]
    \centering
    \includegraphics[width=2.5in]{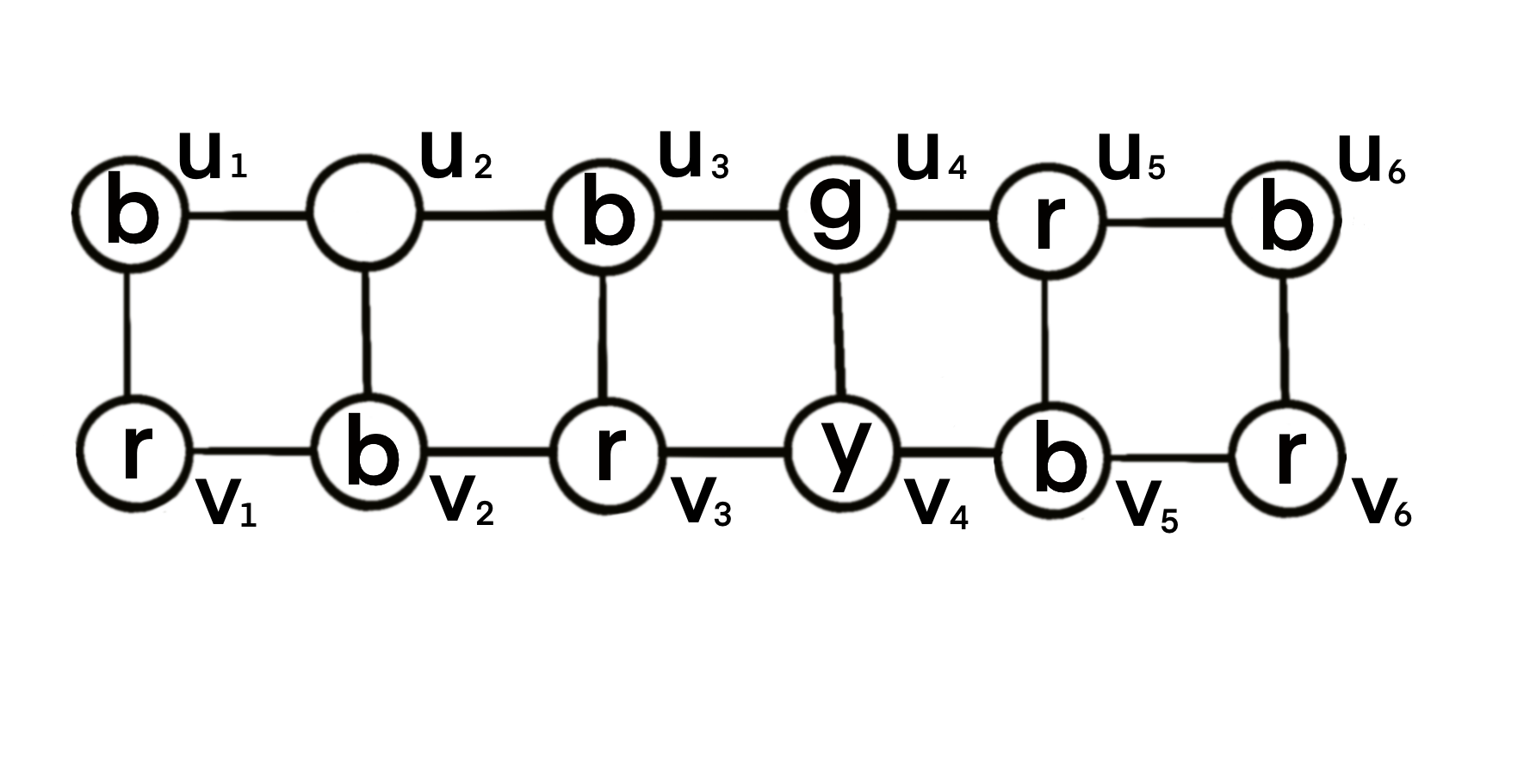}
    \caption{Updated coloring for Case 4}
    \label{fig:updatedcase4}
    \end{figure}
    
    If we were able to change any blue or red vertex to a color other than red and blue, we would contradict the assumption that $c$ is lex-min.  Therefore, $L(u_5)=\{b,g,r\}$ and $L(v_3)=L(v_5)=\{b,r,y\}$. It must be that $L(v_4)\subseteq\{b,g,r,y\}$, because otherwise we could define a new coloring $c'$ in which $c'(v_5)=y$, $c'(v_4)\not\in\{b,g,r,y\}$, and $c'=c$ for all other vertices.  The color word of $c'$ is lexicographically less than $w_c$, which is a contradiction.  For similar reasons, $L(u_4)\subseteq\{b,g,r,y\}$.  
    
    Now, notice that $L(u_4)=\{b,g,y\}$.  If not, then $r\in L(u_4)$, so we could define a new coloring $c'$ in which $c'(u_5)=g$, $c'(u_4)=r$, and $c'=c$ for all other vertices.  The color word of $c'$ is equal to $w_c$, so they are both lex-min colorings.  However, $c'$ contains a copy of $F_3$, which is a contradiction.  Further, notice that $L(v_4)=\{b,r,y\}$.  If not, then $g\in L(v_4)$, and we can define a new coloring $c''$ such that $c''(u_5)=g$, $c''(u_4)=y$, $c''(v_4)=g$, and $c''=c$ for all other vertices.  The color word of $c''$ is less than $w_c$, which is a contradiction. 
    
    Now that we know the color lists for $u_5$, $u_4$, $v_4$, and $v_3$, we can define a new coloring $c'$ in the following way.  Let $c'(u_5)=g$, $c'(u_4)=y$, $c'(v_4)=r$, $c'(v_3)=y$, and $c'=c$ for all other vertices.  The color word of $c'$ is less than $w_c$, which is a contradiction, so the Case 4 color arrangement cannot occur in coloring $c$. 
    
    In Case 5, the blue vertices are arranged in 3 consecutive rungs, with the fourth and fifth blue vertices being $u_5$ and $v_6$.  If $u_2$ is red, then $F_2$ occurs, which is a contradiction.  And if $u_4$, $u_6$, and $v_5$ are all red, then $F_1$ occurs, which is a contradiction.  So there cannot be 5 red vertices in this case.  Further, to avoid $F_1$ and $F_2$ when there are 4 red vertices, it must be that $v_1$, $v_3$, and exactly 2 of $u_4$, $u_6$, and $v_5$ are red.  If $u_4$ is red, a copy of $F_3$ is created, which is a contradiction.  So it must be that $v_1$, $v_3$, $v_5$, and $u_6$ are the only red vertices in this case.  Notice that $u_4$ and $v_4$ are neither red nor blue, and $c(u_4)\ne c(v_4)$.  An updated diagram for Case 5 is shown in Figure~\ref{fig:updatedcase5}.  For simplicity, we will say $c(u_4)=g$ and $c(v_4)=y$. 
    \begin{figure}[!ht]
    \centering
    \includegraphics[width=2.5in]{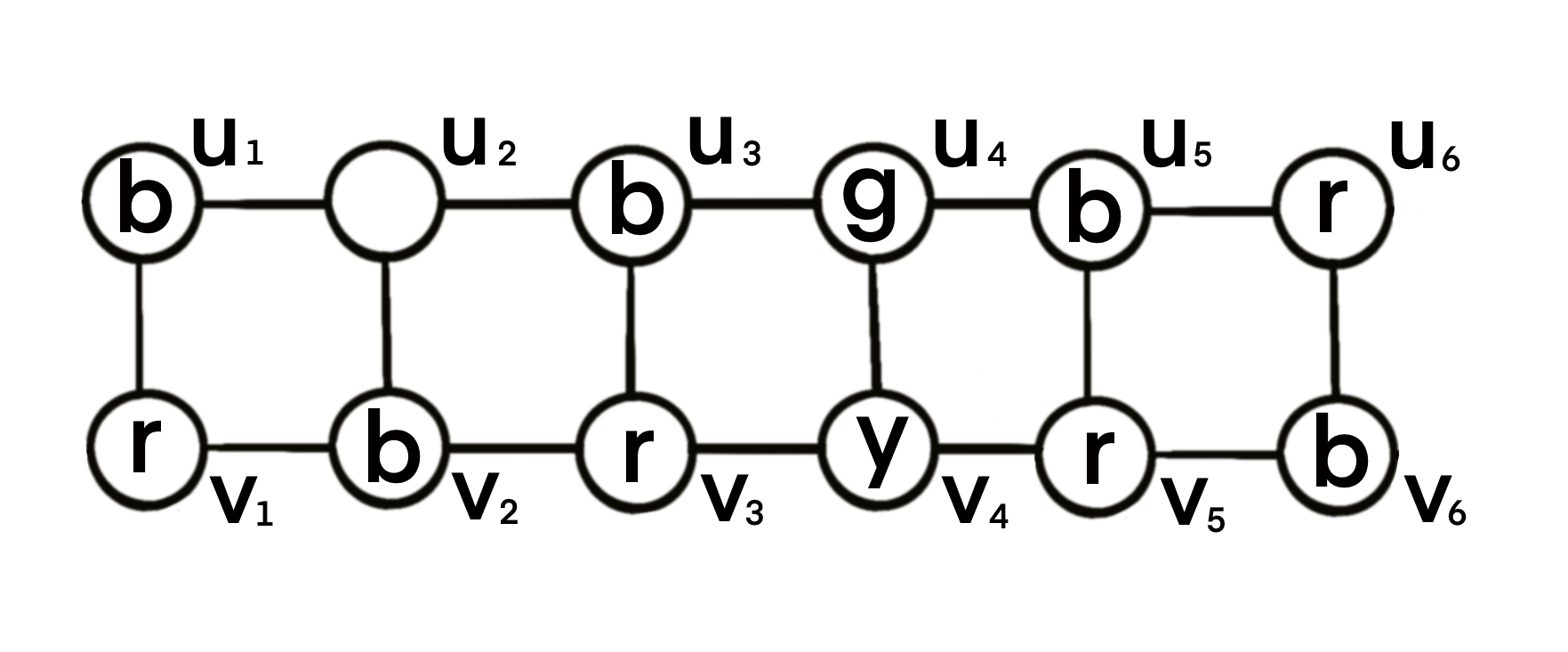}
    \caption{Updated coloring for Case 5}
    \label{fig:updatedcase5}
    \end{figure}
    
    By an argument similar to the one from Case 4, we can determine that $L(u_5) = \{b,g,r\}$, $L(v_3)=L(v_5)=\{b,r,y\}$, $L(v_4)\subseteq\{b,g,r,y\}$, and $L(u_4)\subseteq\{b,g,r,y\}$.  Now, notice that $L(v_4)=\{b,g,y\}$.  If not, then $r\in L(v_4)$, so we could define a new coloring $c'$ in which $c'(v_4)=r$, $c'(v_3)=c'(v_5)=y$, and $c'=c$ for all other vertices.  The color word of $c'$ is lexicographically less than $w_c$, which is a contradiction.  
    
    We know there must exist some $c_o\in L(u_4)$ such that $c_0\not\in\{b,g\}$.  In particular, $c_0\in\{r,y\}$.  So define a new coloring $c'$ in the following way: $c'(u_5)=g$; $c'(u_4)=c_0$; $c'(v_4)=b$;  $c'(v_5)=y$; if $c_0=r$, then $c'(v_3)=y$, otherwise $c'(v_3)=r$; and for all other vertices, $c'=c$.  When switching from $c$ to $c'$, the blue and green color classes stay the same size, the red color class decreases by 1, and the yellow color class increases by 1, so $w_{c'}$ is lexicographically less than $w_c$.  This is a contradiction, so the Case 5 color arrangement cannot occur in coloring $c$.
    
    Similar arguments hold when $H$ contains 5 red vertices and 4 blue vertices.  Simply swap the colors red and blue, swap the reducible configurations $F_1$ and $F_2$, and swap the configurations $F_3$ and $F_4$.  Therefore, when $c$ is lex-min, every subgraph of 6 consecutive rungs of $\Pi_n$ can contain at most 8 vertices that are blue or red.
\end{proof}

\begin{lemma}\label{lem:bluereddiffatleast2}
    If $c$ is lex-min and not $\left\lceil 2n/3\right\rceil$-bounded, then $|\text{Blue}|\geq |\text{Red}|+2$.
\end{lemma}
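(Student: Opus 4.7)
The plan is to argue by contradiction, assuming $|\text{Red}| = |\text{Blue}|$ or $|\text{Red}| = |\text{Blue}|-1$, and combine the local upper bound from Lemma~\ref{lem:6consecrungs} with a global lower bound on $|\text{Red}|+|\text{Blue}|$ via a double-counting argument over the cyclic family of 6-rung subgraphs.

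First I would establish the global lower bound. Since $c$ is not $\lceil 2n/3 \rceil$-bounded, $|\text{Blue}| \geq \lceil 2n/3 \rceil + 1$. Under the contradiction hypothesis $|\text{Red}| \geq |\text{Blue}| - 1 \geq \lceil 2n/3 \rceil$, so
\begin{equation*}
|\text{Red}| + |\text{Blue}| \;\geq\; 2\lceil 2n/3 \rceil + 1 \;\geq\; \tfrac{4n}{3} + 1.
\end{equation*}

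Next I would set up the double count. Since the rungs of $\Pi_n$ are arranged cyclically and $n\geq 6$, there are exactly $n$ subgraphs consisting of 6 consecutive rungs, and every rung (hence every vertex) lies in precisely 6 of them. Summing the number of red-or-blue vertices across all $n$ such subgraphs therefore equals $6(|\text{Red}|+|\text{Blue}|)$. By Lemma~\ref{lem:6consecrungs}, each of the $n$ summands is at most $8$, so
\begin{equation*}
6(|\text{Red}|+|\text{Blue}|) \;\leq\; 8n, \qquad \text{i.e.,} \qquad |\text{Red}|+|\text{Blue}| \;\leq\; \tfrac{4n}{3}.
\end{equation*}
Combining this with the lower bound from the previous step yields $\tfrac{4n}{3}+1 \leq \tfrac{4n}{3}$, which is the desired contradiction. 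Hence $|\text{Blue}| \geq |\text{Red}| + 2$.

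There is no real obstacle beyond bookkeeping here, since the two ingredients (the structural Lemma~\ref{lem:6consecrungs} and the counting bound on $|\text{Blue}|$ from the hypothesis that $c$ fails $\lceil 2n/3 \rceil$-boundedness) were precisely calibrated so that averaging closes the gap. The only point that needs a brief check is that Lemma~\ref{lem:blueredatleast2bigger} is not actually needed in this final step: the contradiction comes purely from the local/global tension between $8$ red-or-blue vertices per window and the fact that $|\text{Red}|+|\text{Blue}|$ slightly exceeds $4n/3$.
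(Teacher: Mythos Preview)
Your proof is correct and follows essentially the same approach as the paper: both argue by contradiction, invoke Lemma~\ref{lem:6consecrungs} for the local bound of $8$ red-or-blue vertices per window, double-count over the $n$ cyclic windows of $6$ consecutive rungs, and compare against the lower bound $|\text{Blue}|+|\text{Red}|\geq 2\lceil 2n/3\rceil+1$ forced by the hypotheses. Your observation that Lemma~\ref{lem:blueredatleast2bigger} plays no role in this step is also consistent with the paper's argument.
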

\begin{proof}
    Suppose $|\text{Blue}|<|\text{Red}|+2$.  That is, suppose $|\text{Red}|=|\text{Blue}|$ or $|\text{Red}| = |\text{Blue}|-1$.
    
    By Lemma~\ref{lem:6consecrungs}, we know that every 6 consecutive rungs must contain at most 8 vertices that are blue or red.  There are $n$ distinct subgraphs of 6 consecutive rungs, and each red or blue vertex is in exactly 6 of these subgraphs.  So we can now bound the number of red and blue vertices in $\Pi_n$, $n\geq 6$, in the following way:
    \begin{align*}
        |\text{Blue}|+|\text{Red}| &\leq \frac{8n}{6} \\
                                   &\leq 2\left\lceil 2n/3\right\rceil \\
                                   &< \left\lceil 2n/3\right\rceil + 1 + \left\lceil 2n/3\right\rceil \\
                                   &\leq |\text{Blue}| + |\text{Red}|.
    \end{align*}
    This is a contradiction, so if $|\text{Blue}| \geq \left\lceil 2n/3\right\rceil +1$, we must have $|\text{Red}|\leq \left\lceil 2n/3\right\rceil -1 \leq |\text{Blue}|-2$.
\end{proof}

\subsection{One Large Color Class}
\label{subsec:blue2bigger}

In Section~\ref{subsec:bluereddifferby0or1}, we determined that the largest color class of a lex-min, non-$\left\lceil 2n/3\right\rceil$-bounded coloring of $\Pi_n$, $n\geq 6$, must contain at least 2 more vertices than every other color class.  In this subsection, we use this fact to finish proving our main result.  To simplify notation, we continue to say Blue is the largest color class.

To begin, we show that the color configurations in Figure~\ref{fig:reducibleconfigs2} do not occur in a lex-min, non-$\left\lceil 2n/3\right\rceil$-bounded coloring.  In these diagrams, the blue vertices must come from the largest color class, but the yellow vertices do not need to come from the second largest color class.  A blank vertex could be any color which is not already in use by its neighbors.

\begin{figure}[!htb]
\begin{center}
    \begin{minipage}{0.28\textwidth}
    \includegraphics[width=1.75in]{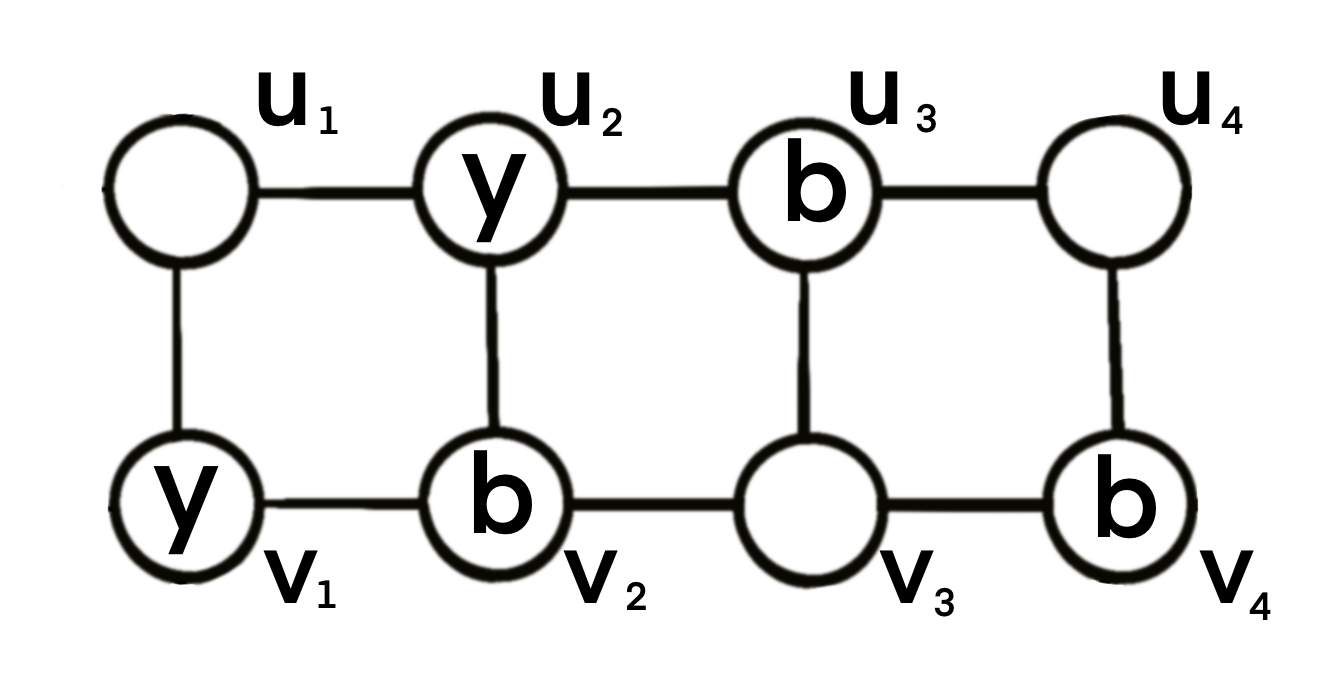} 
    \captionof*{figure}{Configuration $F_5$}   
    \end{minipage}\hspace{0.1\textwidth}
    \begin{minipage}{0.28\textwidth}
    \includegraphics[width=1.75in]{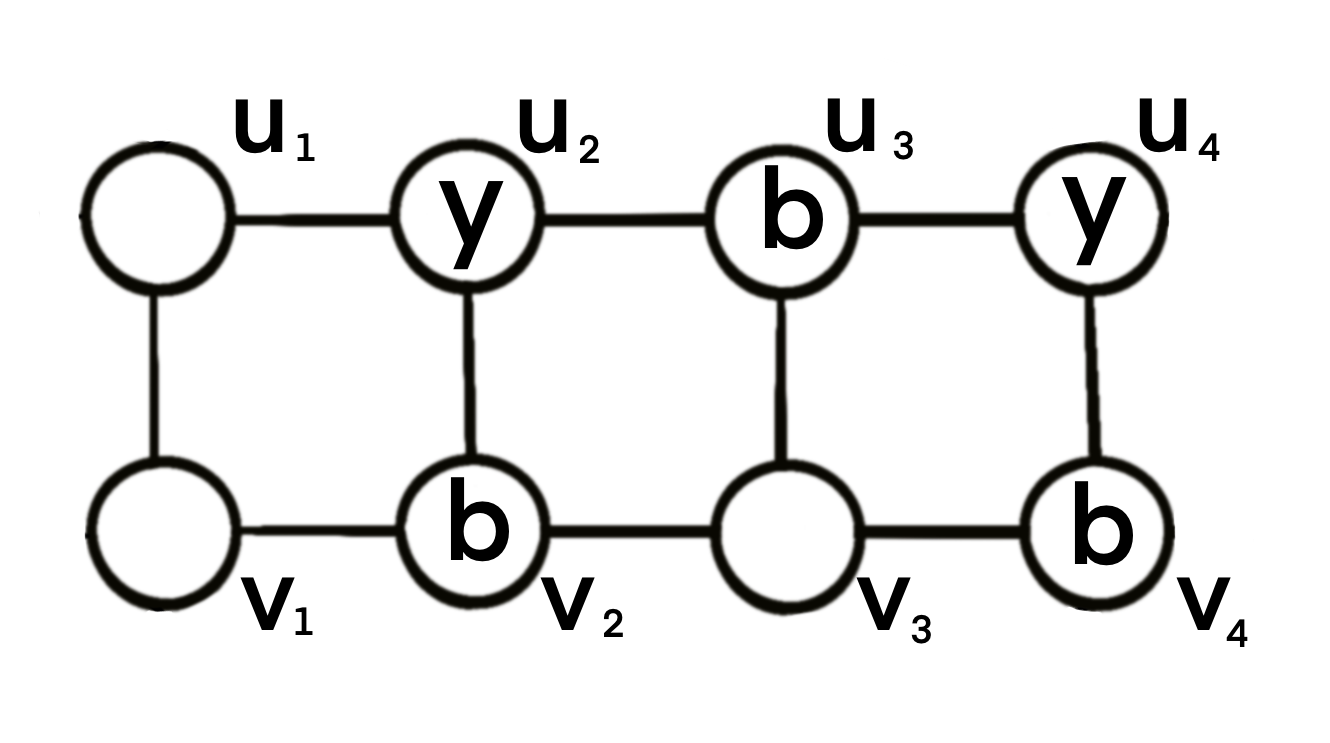} 
    \captionof*{figure}{Configuration $F_6$}   
    \end{minipage}
\caption{Reducible configurations for Lemma~\ref{lem:reducibleconfigsdifferatleast2}}
\label{fig:reducibleconfigs2}
\end{center}
\end{figure}

\begin{lemma}\label{lem:reducibleconfigsdifferatleast2}
    If $c$ is lex-min and not $\left\lceil 2n/3\right\rceil$-bounded, then the color configurations in Figure~\ref{fig:reducibleconfigs2} do not occur.
\end{lemma}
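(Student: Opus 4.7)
My plan is to mirror the style of the proof of Lemma~\ref{lem:reducibleconfigsDiff0or1}: in each configuration, attempt to produce a proper $L$-coloring $c'$ whose color word is lexicographically strictly smaller than $w_c$, contradicting the assumption that $c$ is lex-min. The new ingredient compared to Section~\ref{subsec:bluereddifferby0or1} is Lemma~\ref{lem:bluereddiffatleast2}: because $|\text{Blue}|$ is at least 2 larger than every other color class, I have genuine room to maneuver. Any recoloring that decreases $|\text{Blue}|$ by exactly 1 while growing one or two other classes by 1 apiece still leaves Blue largest, and therefore $w_{c'}<w_c$ at the position corresponding to Blue.

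For configuration $F_5$, I would first look at each blue vertex $v$ in the picture and test the ``easy recoloring'': if $L(v)$ contains any color $c_0\ne b$ that does not appear on the (non-blue) neighbors of $v$ in $c$, setting $c'(v)=c_0$ directly gives a proper coloring with $|\text{Blue}|$ dropping by 1 and only one other class gaining a vertex, and by Lemma~\ref{lem:bluereddiffatleast2} we reach a contradiction. If every such attempt fails, then each $L(v)$ is forced to consist exactly of $b$ together with the colors of $v$'s neighbors in the configuration. These forced lists pin down the local structure enough that I can then swap the blue vertex with an adjacent yellow vertex: recolor the blue vertex yellow and the yellow vertex to the residual entry of its (now also forced) list. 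Tracking the effect on class sizes, Blue again decreases by 1 while Yellow stays the same size and one other class gains a vertex, contradicting lex-minimality.

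Configuration $F_6$ should be handled in the same spirit, but with a longer propagation. I would first rule out direct recolorings at each blue vertex as before, extracting list constraints from every failure. Then, using the additional yellow vertex, I would perform a two-step swap along the blue-yellow path: recolor one blue vertex to the color of its yellow neighbor, reassign that yellow vertex the remaining forced list entry, and check that the second yellow vertex's list permits either a direct swap or a further substitution. At every stage, if some list contains a color not yet considered, this extra color immediately supplies the lex-decreasing recoloring; otherwise the list is fully determined and the swap itself succeeds. In either outcome the net change drops $|\text{Blue}|$ by 1.

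The main obstacle will be the bookkeeping: once all easy recolorings are eliminated, the lists at the blue and yellow vertices are severely restricted, and I must confirm that among the (at most a handful of) remaining options there is always at least one that both preserves properness and strictly decreases the color word. The slack of 2 provided by Lemma~\ref{lem:bluereddiffatleast2} is essential here, because a single recoloring could otherwise tie Blue with another class and leave the first entry of the color word unchanged; with a gap of at least 2 this cannot happen, and each successful swap guarantees the contradiction needed to rule out $F_5$ and $F_6$.
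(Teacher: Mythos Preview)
Your high-level strategy matches the paper's: attempt a direct recoloring of a blue vertex, deduce that its list is forced when this fails, and then perform a local swap. However, the swap you propose is aimed at the wrong neighbor, and the argument breaks there.

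In both $F_5$ and $F_6$ the key blue vertex $w$ (namely $v_2$ in $F_5$ and $u_3$ in $F_6$) has exactly two yellow neighbors and one blank neighbor $v_3$. After the easy recoloring fails you correctly obtain $L(w)=\{b,y,c(v_3)\}$. You then want to recolor $w$ with $y$ and push one adjacent yellow vertex to ``the residual entry of its (now also forced) list.'' This fails for two reasons. First, $w$ has \emph{two} yellow neighbors, so setting $c'(w)=y$ creates two conflicts; recoloring only one yellow neighbor leaves the other conflict in place. Second, nothing you have done forces any yellow vertex's list: the forcing step relies on the fact that recoloring a \emph{blue} vertex decreases $|\text{Blue}|$ and hence $w_c$, but recoloring a yellow vertex carries no such guarantee, so that vertex's list could perfectly well be $\{y,b,c(\text{outside neighbor})\}$ with no usable residual entry.

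The paper's move is instead to push onto the \emph{blank} neighbor: set $c'(w)=c(v_3)$ (this lies in $L(w)$ by the forcing, and $c(v_3)\ne y$, so both yellow neighbors remain happy) and then choose $c'(v_3)\in L(v_3)\setminus\{b,c(v_3)\}$. In the configurations the remaining neighbors of $v_3$ are blue, so this is proper; $|\text{Blue}|$ drops by $1$ and Lemma~\ref{lem:bluereddiffatleast2} supplies the contradiction. Note finally that $F_6$ needs no ``longer propagation'' at all: the paper dispatches it with the identical one-step swap, merely replacing $v_2$ by $u_3$.
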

\begin{proof}
    In configuration $F_5$, we attempt to recolor $v_2$.  If $L(v_2)$ contains a color other than $b$, $y$, and $c(v_3)$, then we may recolor it and obtain a lexicographically smaller color word.  This is a contradiction, so it must be that $L(v_2)=\{b,y,c(v_3)\}$.  Note that this implies $c(v_3)$ is neither $b$ nor $y$.  Define a new coloring $c'$ in which $c'(v_2)=c(v_3)$ and $c'(v_3)\not\in\{b,c(v_3)\}$.  Since $|\text{Blue}|$ decreases by 1, $w_{c'}$ is lexicographically less than $w_c$, which is a contradiction.  So $F_5$ cannot occur in the lex-min coloring $c$.
    
    In configuration $F_6$, we attempt to recolor $u_3$. By an argument similar to the $F_5$ case, $L(u_3)=\{b,y,c(v_3)\}$ and $c(v_3)\not\in\{b,y\}$.  Define a new coloring $c'$ in which $c'(u_3)=c(v_3)$ and $c'(v_3)\not\in\{b,c(v_3)\}$.  Then $w_{c'}$ is lexicographically less than $w_c$, which is a contradiction.  So $F_6$ cannot occur in the lex-min coloring $c$.
\end{proof}

Next, we will show that there cannot be 4 consecutive rungs which contain blue vertices.

\begin{lemma}\label{lem:fourconsecutiverungs}
    If $c$ is lex-min and not $\left\lceil 2n/3\right\rceil$-bounded, then there does not exist a subgraph of 4 consecutive rungs of $\Pi_n$, $n\geq 6$, in which 4 vertices are colored blue.
\end{lemma}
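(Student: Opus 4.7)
The plan is to suppose for contradiction that some subgraph $H$ of four consecutive rungs in $\Pi_n$ contains four blue vertices, and then to identify within $H$ a forced copy of one of the reducible configurations $F_5$, $F_6$, contradicting Lemma~\ref{lem:reducibleconfigsdifferatleast2}.

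First, I would pin down the shape of the blue set inside $H$. Label the rungs $u_1v_1, u_2v_2, u_3v_3, u_4v_4$ and recall that Blue is an independent set in $\Pi_n$. The rung edges immediately force at most one blue vertex per rung, so four blues in four rungs means exactly one blue per rung. The top-cycle edges $u_iu_{i+1}$ and bottom-cycle edges $v_iv_{i+1}$ then forbid two consecutive blues on the same side, so the blue vertices must alternate top/bottom across the four rungs. Up to the natural top-bottom symmetry of the prism, I may assume the blue vertices are $u_1, v_2, u_3, v_4$.

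Next, I would extract the local color structure around the two ``interior'' non-blue vertices $u_2$ and $v_3$. Observe that $u_2$ is adjacent to $u_1, u_3, v_2$ and $v_3$ is adjacent to $v_2, v_4, u_3$, so both $u_2$ and $v_3$ have all three of their neighbors blue. In particular, the blue vertex $v_2$ sits adjacent to the non-blue vertex $u_2$ on its rung and to the non-blue vertex $v_3$ along the bottom cycle, and symmetrically the blue vertex $u_3$ sits adjacent to $v_3$ on its rung and to $u_2$ along the top cycle. This is exactly the local pattern appearing in the configurations $F_5$ and $F_6$ of Figure~\ref{fig:reducibleconfigs2}, with $c(u_2)$ or $c(v_3)$ playing the role of the ``yellow'' color in each case.

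Finally, I would invoke Lemma~\ref{lem:reducibleconfigsdifferatleast2} to conclude that neither $F_5$ nor $F_6$ can appear in $c$, contradicting the existence of such an $H$. The main obstacle I anticipate is verifying carefully that the alternating pattern $u_1, v_2, u_3, v_4$ produces a literal copy of $F_5$ (and, by the up--down symmetry of $H$, also of $F_6$) regardless of what colors actually appear at $u_2$, $v_3$, $v_1$, and $u_4$; since the ``yellow'' vertex in those reducible configurations may be \emph{any} non-blue color and the unlabeled vertices are blank, this check should reduce to reading off the forced blue positions and citing Lemma~\ref{lem:reducibleconfigsdifferatleast2} to finish the proof.
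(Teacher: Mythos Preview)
Your reduction to the alternating blue pattern $u_1,v_2,u_3,v_4$ is correct, but the final step---claiming that this pattern alone realizes $F_5$ or $F_6$---is where the argument breaks. Each of $F_5$ and $F_6$ requires \emph{two} of the non-blue vertices to share a single color (the ``yellow'' vertices in Figure~\ref{fig:reducibleconfigs2}); the configurations are not determined by the blue positions alone. Concretely, $F_5$ centered at the blue vertex $v_2$ needs its two non-blank neighbors $v_1$ and $u_2$ to be the same color, and $F_6$ centered at the blue vertex $u_3$ needs $u_2$ and $u_4$ to be the same color. If the four non-blue vertices $v_1,u_2,v_3,u_4$ all receive distinct colors, neither configuration appears and Lemma~\ref{lem:reducibleconfigsdifferatleast2} gives no contradiction.

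This is exactly what happens in the paper's proof: applying $F_5$ and $F_6$ only yields the constraints $c(v_1)\ne c(u_2)$, $c(u_4)\ne c(v_3)$, $c(u_2)\ne c(u_4)$, $c(v_1)\ne c(v_3)$, leaving four essentially different colorings of $H$ (Figure~\ref{fig:4rungcolorings}). Each case then requires a separate recoloring argument---and in Cases~3 and~4 the paper must pull in an additional rung outside $H$ and analyze several sub-cases on the lists $L(u_1)$ and $L(v_1)$ before reaching a contradiction. So the bulk of the work lies beyond the point where your sketch stops; you would need to supply that case analysis (or find a genuinely new shortcut) to complete the proof.
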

\begin{proof}
    Suppose $\Pi_n$ contains a subgraph $H$ of four consecutive rungs with blue vertices.  A diagram of this subgraph is shown in Figure~\ref{fig:4bluerungs}.  Note that the blank vertices cannot be blue because the graph must be properly colored.
    \begin{figure}[!ht]
    \centering
    \includegraphics[width=2in]{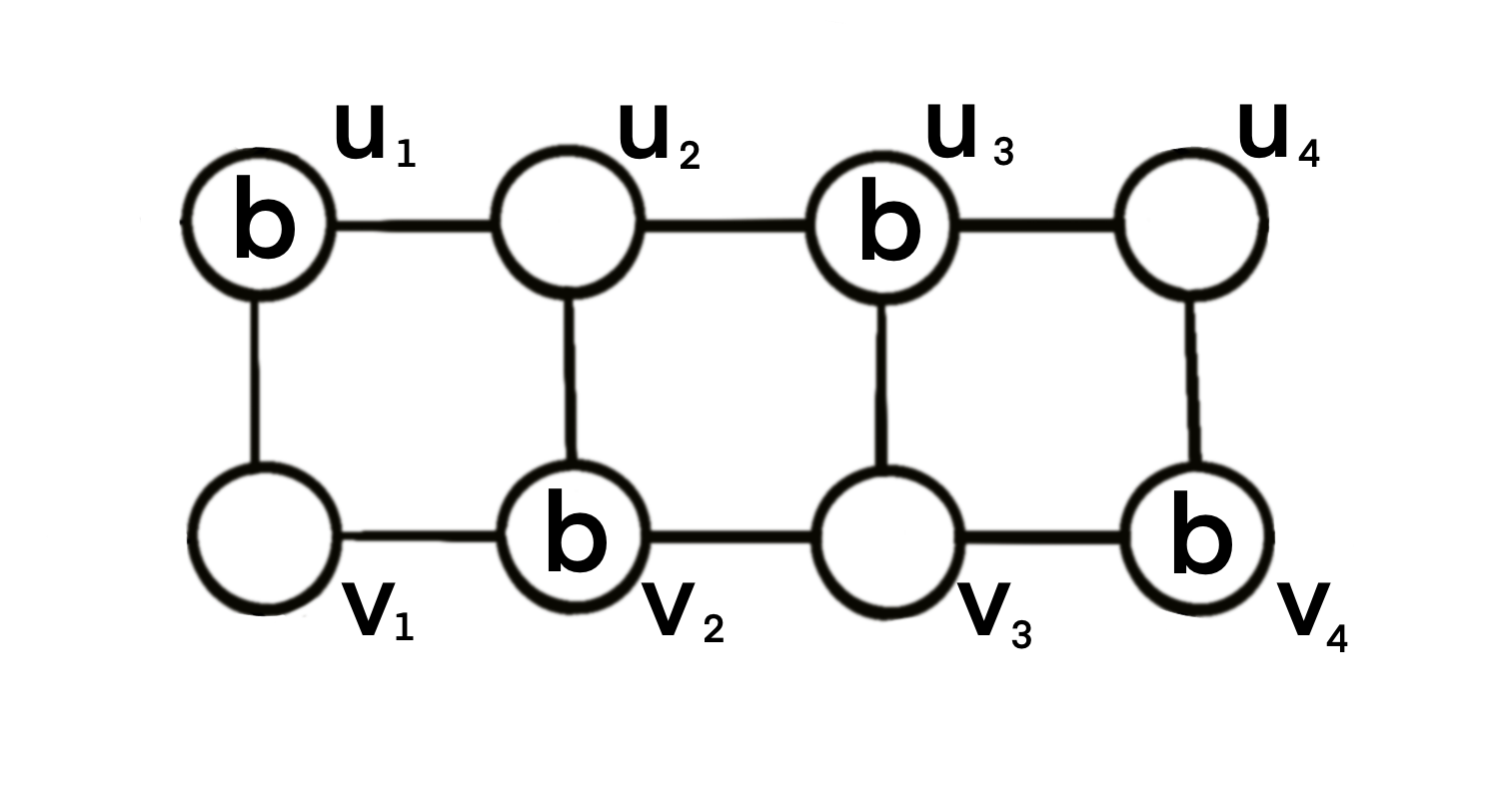}
    \caption{Diagram of subgraph $H$}
    \label{fig:4bluerungs}
    \end{figure}    
    
    To avoid configuration $F_5$, $c(v_1)\ne c(u_2)$ and $c(u_4)\ne c(v_3)$.  To avoid $F_6$, $c(u_2)\ne c(u_4)$ and $c(v_1)\ne c(v_3)$.  Therefore, without a loss of generality, there are four possible ways to color $v_1$, $u_2$, $v_3$, and $v_4$, which can be seen in Figure~\ref{fig:4rungcolorings}.
    \begin{figure}[!htb]
    \begin{center}
    \begin{minipage}{0.3\textwidth}
    \includegraphics[width=\linewidth]{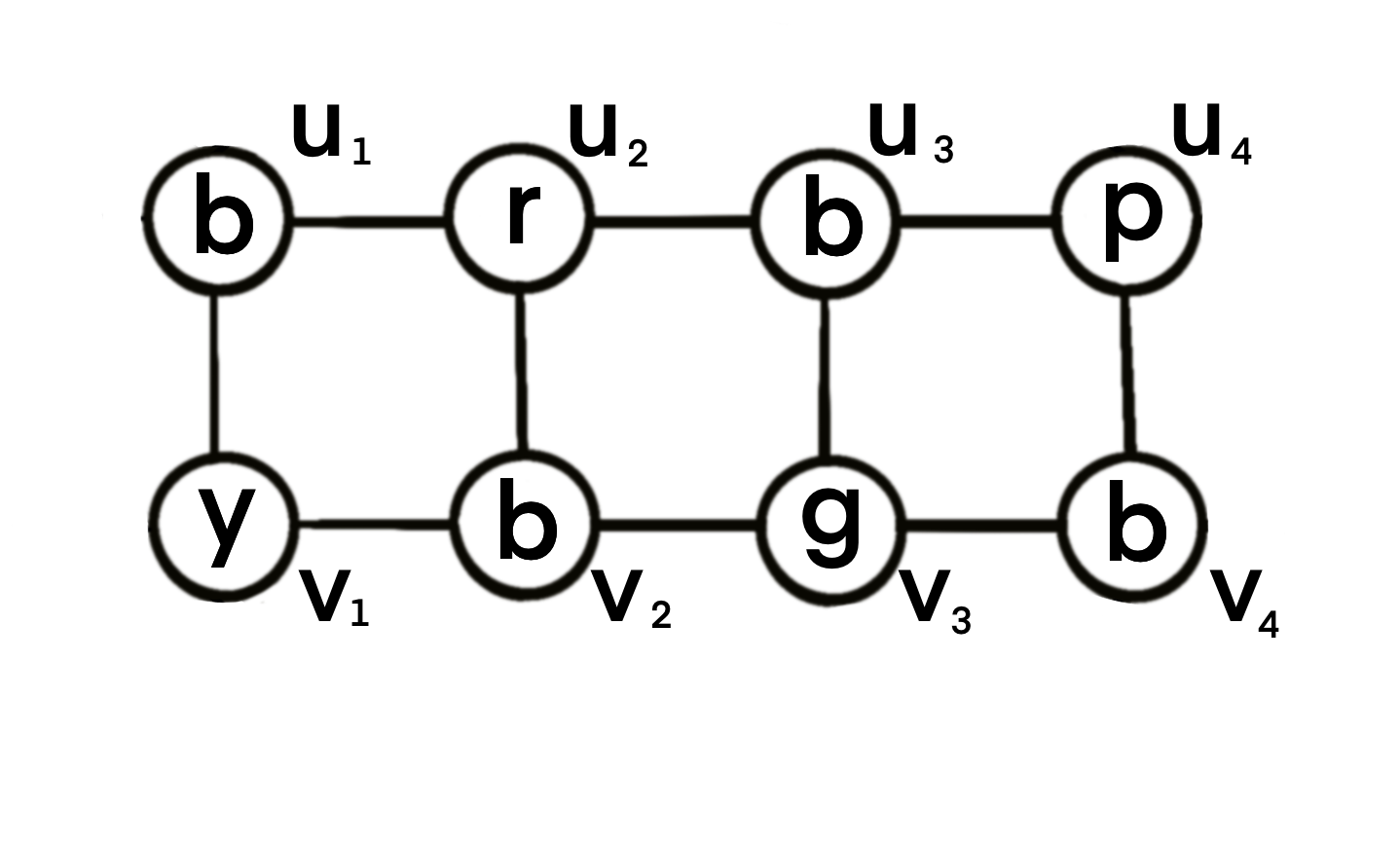}
    \captionof*{figure}{Case 1}
    \end{minipage}\hspace{0.1\textwidth}
    \begin{minipage}{0.3\textwidth}
    \includegraphics[width=\linewidth]{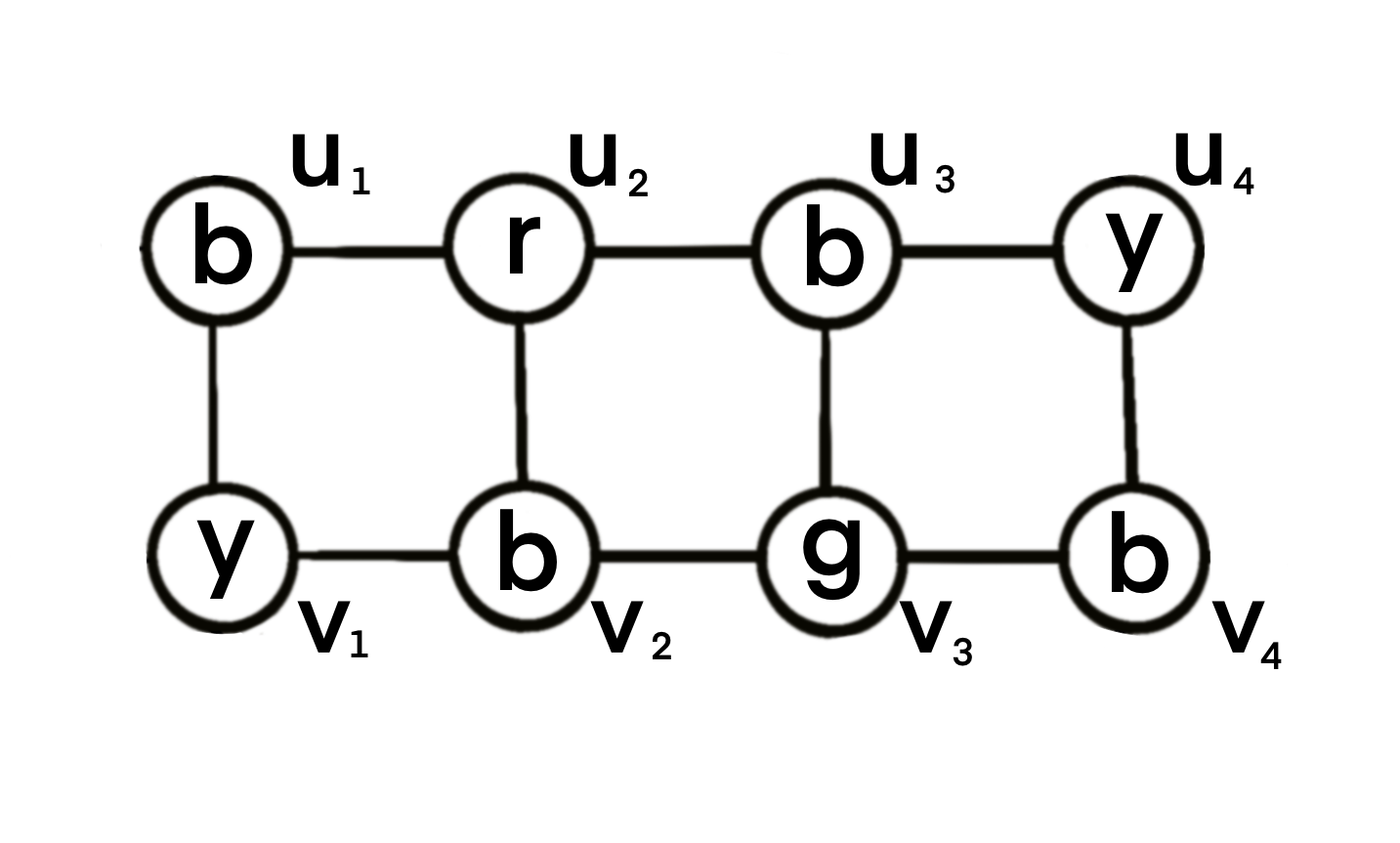} 
    \captionof*{figure}{Case 2}   
    \end{minipage}
    \begin{minipage}{0.3\textwidth}
    \includegraphics[width=\linewidth]{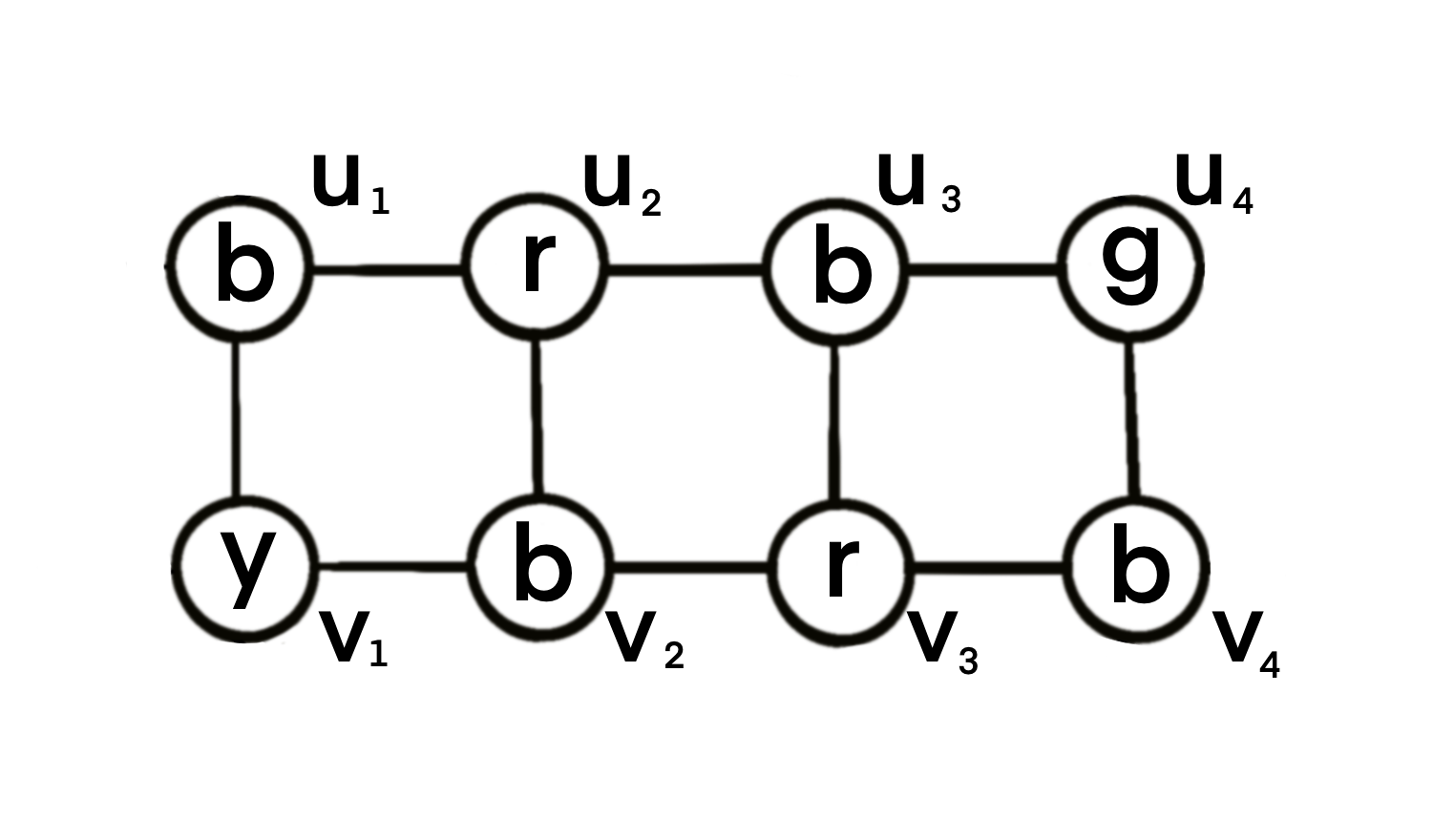} 
    \captionof*{figure}{Case 3}   
    \end{minipage}\hspace{0.1\textwidth}
    \begin{minipage}{0.3\textwidth}
    \includegraphics[width=\linewidth]{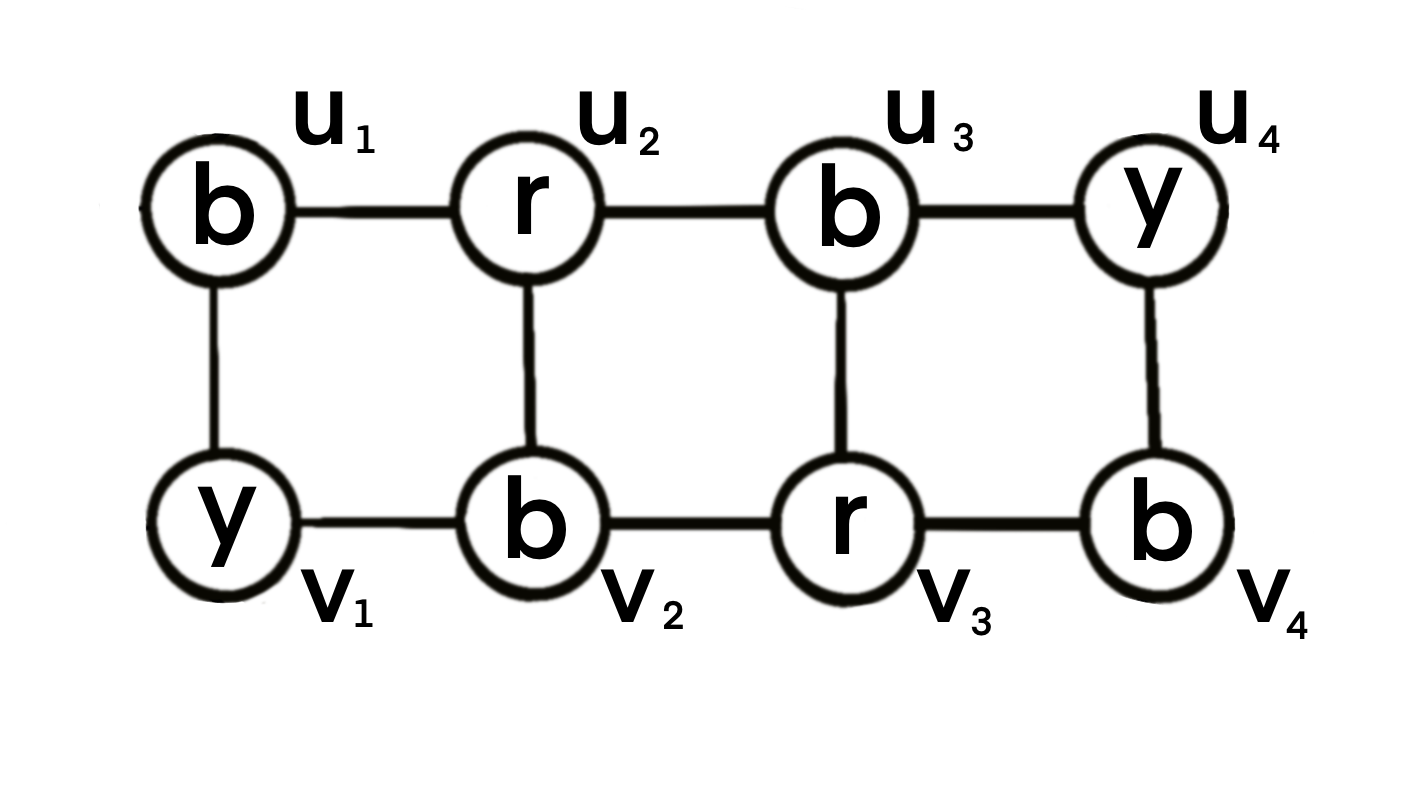} 
    \captionof*{figure}{Case 4}   
    \end{minipage}
    \caption{Possible colorings of $H$}
    \label{fig:4rungcolorings}
    \end{center}
    \end{figure}
    
    In all cases, the color list of every blue vertex must only contain blue and colors utilized by neighbors.  If not, we could recolor one blue vertex to obtain a new coloring whose color word is lexicographically less than $w_c$, which is a contradiction.
    
    In Cases 1 and 2, we know $L(v_2)\subseteq \{b,g,r,y\}$.  If $r\in L(v_2)$, define a new coloring $c'$ in which $c'(v_2)=r$, $c'(u_2)\not\in\{b,r\}$, and $c'=c$ for all other vertices.  If $g\in L(v_2)$, define a new coloring $c''$ in which $c''(v_2)=g$, $c''(v_3)\not\in\{b,g\}$, and $c''=c$ for all other vertices.  Both $w_{c'}$ and $w_{c''}$ are lexicographically less than $w_c$, so we obtain a contradiction and Cases 1 and 2 cannot occur.
    
    In Cases 3 and 4, we know $L(v_2)=\{b,r,y\}$ and $L(u_3)=\{b,r,c(u_4)\}$.  Define a new coloring $c'$ in which $c'(v_2)=c'(u_3)=r$, $c'(u_2),c'(v_3)\not\in\{b,r\}$, and $c'=c$ for all other vertices.  If $c'(u_2)\ne c'(v_3)$, then the blue color class has decreased by 2 and two other color classes each increase by 1.  Since Blue is at least 2 larger than every other color classes, we know that $w_{c'}$ is lexicographically less than $w_c$, which is a contradiction.  So it must be that $c'(u_2)=c'(v_3)$.  If we are forced to make $c'(u_2)=c'(v_3)$, let's say that the new color of $u_2$ and $v_3$ in $c'$ is pink. If pink occurred at least 3 times fewer than blue in $c$, then $w_{c'}$ is still lexicographically less than $w_c$.  So $c'(u_2)=c'(v_3)$, pink was the second most common color class of $c$, pink occurred exactly 2 times fewer than Blue in $c$, and we cannot avoid utilizing pink when defining $c'$.  This causes us to obtain $w_{c'}=w_c$, so we must define $c'$ in a different way to obtain a contradiction.
    
    Since $|\text{Pink}| = |\text{Blue}|-2$ in $c$ and $c$ uses at least 4 colors, it must be that $|\text{Blue}|=\left\lceil 2n/3\right\rceil +1$ and $|\text{Pink}|=\left\lceil 2n/3\right\rceil -1$ in $c$. Further, since we could not avoid coloring $u_2$ and $v_3$ pink, it must be that $L(u_2)=L(v_3)=\{b,p,r\}$.  Recall that $L(v_2)=\{b,r,y\}$ and $L(u_3)=\{b,r,c(u_4)\}$.  We will consider the rung to the left of $u_1$ and $v_1$ as pictured in Figure~\ref{fig:fifthrung}.
    \begin{figure}[!ht]
    \centering
    \includegraphics[width=2in]{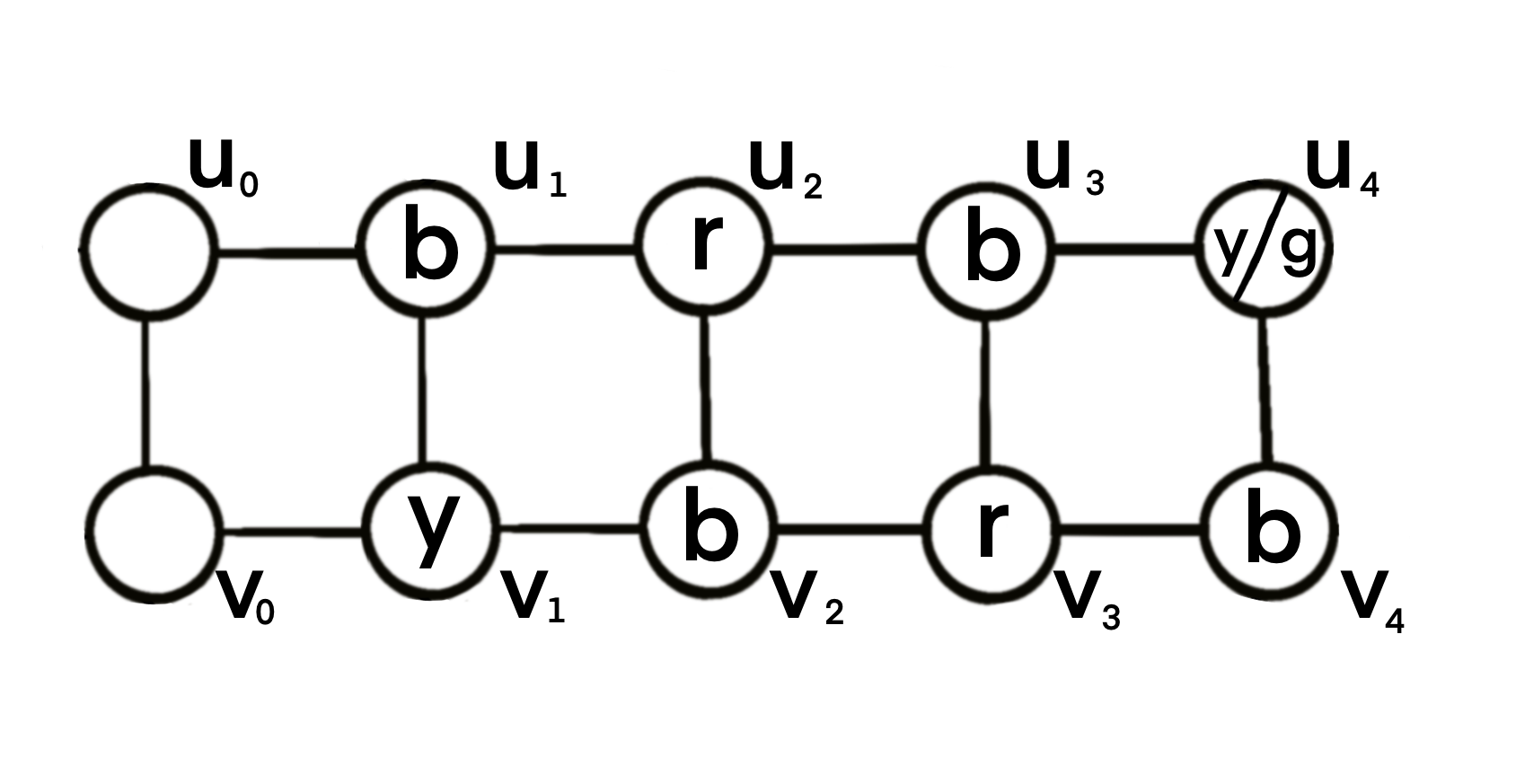}
    \caption{Diagram for Cases 3 and 4}
    \label{fig:fifthrung}
    \end{figure}
    
    Notice that $c(u_0)\ne b$ because $c$ is a proper coloring.  Also, $c(v_0)\ne b$, other wise we could define a new coloring $c'$ in which $c'(v_2)=y$, $c'(v_1)\not\in\{b,y\}$, and $c'=c$ for all other vertices.  Since $w_{c'}$ is less than $w_c$, this gives us a contradiction.  Additionally, notice that $L(v_1)=\{b,y,c(v_0)\}$.  If not, we could obtain a contradiction by defining $c'$ so $c'(v_2)=y$, $c'(v_1)\not\in\{b,y,c(v_0)\}$, and $c'=c$ for all other vertices.  And $L(u_1)\subseteq\{b,r,y,c(u_0)\}$, otherwise we could obtain a contradiction by recoloring $u_1$ with something other than those 4 colors.
    
    If $c(u_0)\in\{r,y\}$, then $L(u_1)=\{b,r,y\}$.  If $c(u_0)=y$, obtain a contradiction by defining a new coloring $c'$ in which $c'(u_1)=r$, $c'(u_2)=p$, and $c'=c$ for all other vertices.  If $c(u_0)=r$, obtain a contradiction by defining a new coloring $c''$ in which $c''(u_1)=y$, $c''(v_1)=b$, $c''(v_2)=p$, and $c''=c$ for all other vertices.
    
    If $c(u_0)\not\in\{r,y\}$, there are 3 possible lists for $u_1$.  If $L(u_1)=\{b,r,y\}$ or $L(u_1)=\{b,y,c(u_0)\}$, obtain a contradiction by defining a new coloring $c'$ in which $c'(u_1)=y$, $c'(v_1)=b$, $c'(v_2)=p$, and $c'=c$ for all other vertices.  If $L(u_1)=\{b,r,c(u_0)\}$, obtain a contradiction by defining a new coloring $c''$ in which $c''(u_1)=r$, $c''(u_2)=p$, and $c''=c$ for all other vertices.
    
    In every case and sub-case we are able to obtain a contradiction.  Thus, it must be that $c$ does not contain four blue vertices in four consecutive rungs.
\end{proof}

A consequence of Lemma~\ref{lem:fourconsecutiverungs} is that in the lex-min coloring $c$, the blue vertices of $\Pi_n$, $n\geq 6$, must be arranged into blocks of 1, 2, or 3 consecutive rungs.  That is, we can cover the vertices and edges of $\Pi_n$ with disjoint copies of the blocks shown in Figure~\ref{fig:blocks}.  The vertically-oriented edges in these diagrams correspond to rungs of $\Pi_n$, and a block may be reflected vertically.  Additionally, the blue vertices in these figures must be blue, the blank vertices must \emph{not} be blue, and each block must be properly colored. 
    \begin{figure}[!htb]
    \begin{center}
    \begin{minipage}{0.3\textwidth}\begin{center}
    \includegraphics[height=.75in]{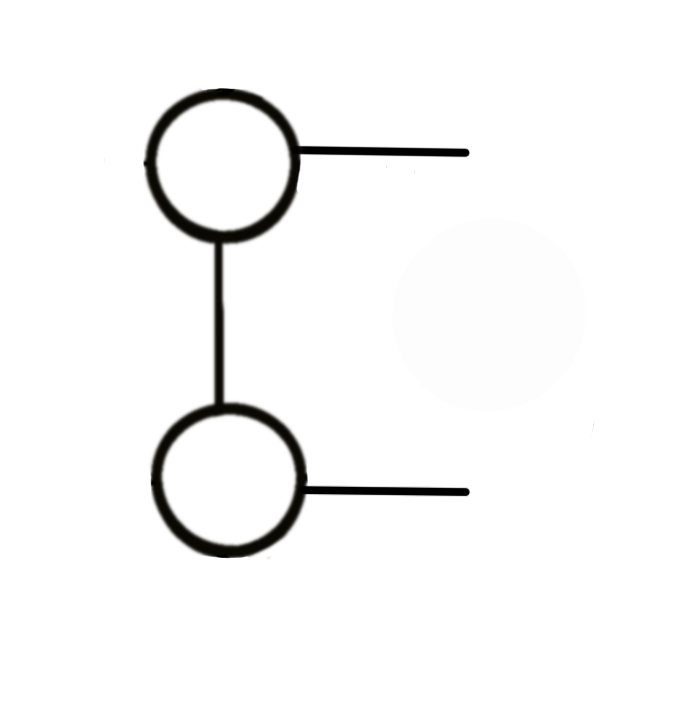}
    \captionof*{figure}{Block $B_0$}\end{center}
    \end{minipage}\hspace{0.1\textwidth}
    \begin{minipage}{0.3\textwidth}\begin{center}
    \includegraphics[height=.75in]{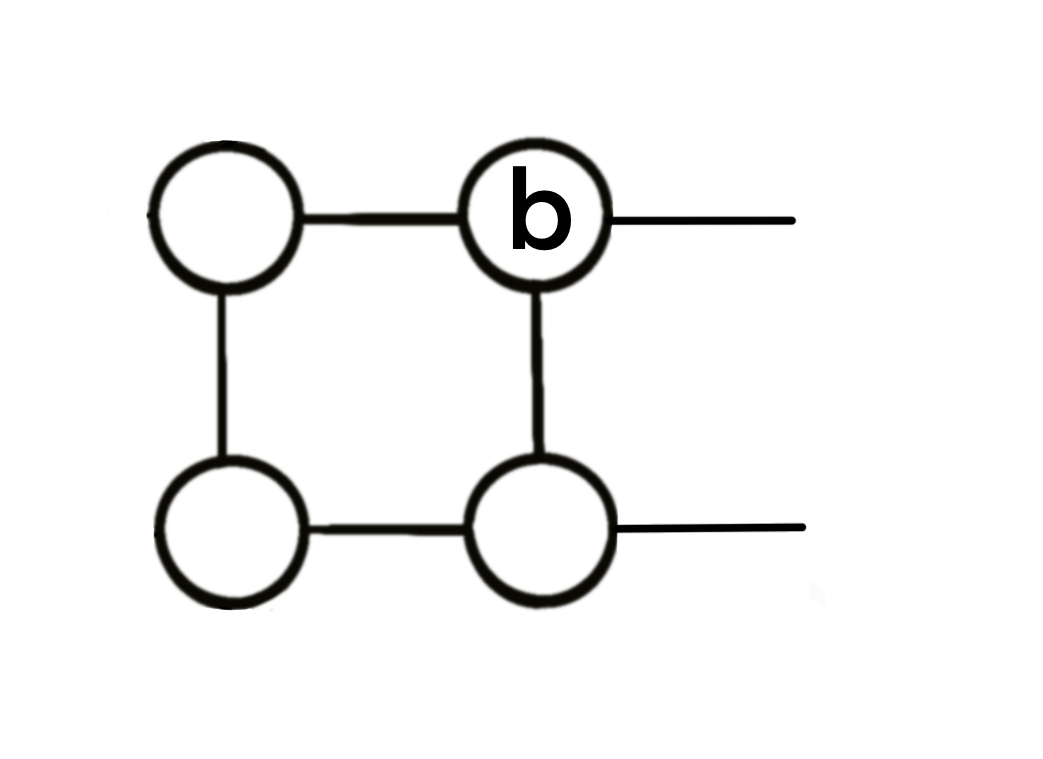} 
    \captionof*{figure}{Block $B_1$}\end{center}
    \end{minipage}
    \begin{minipage}{0.3\textwidth}\begin{center}
    \includegraphics[height=.75in]{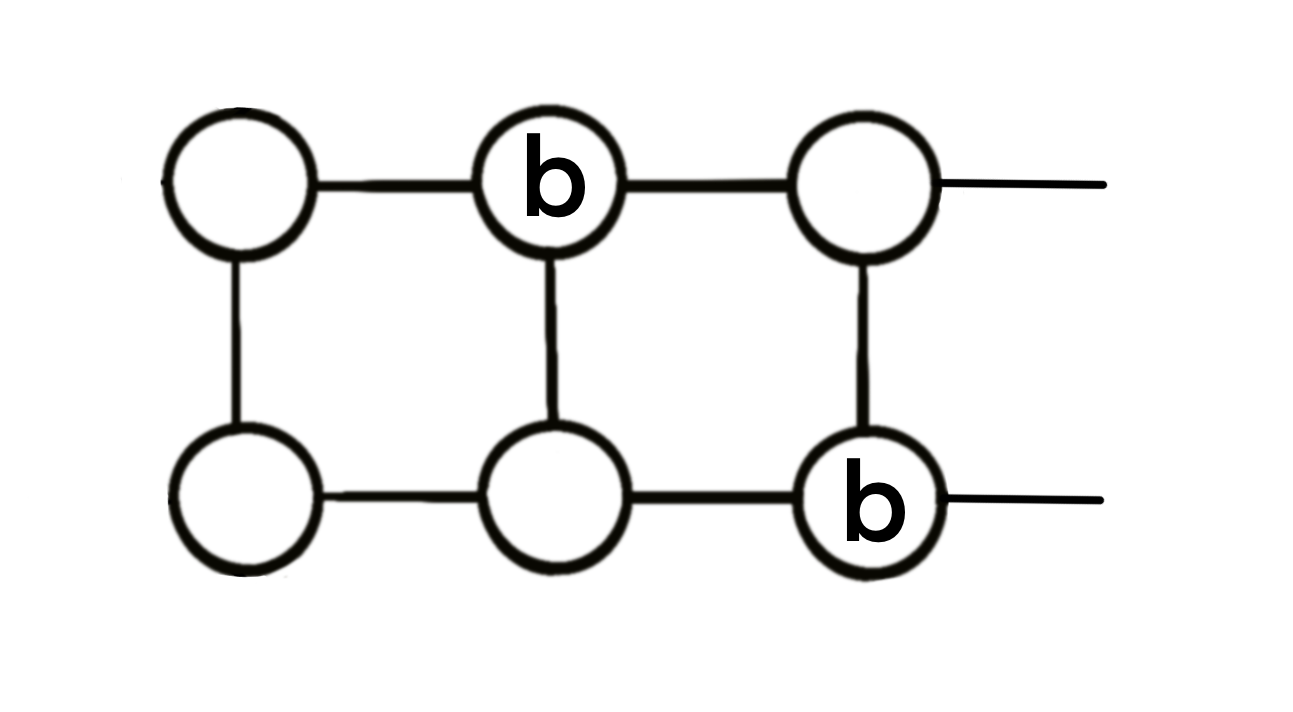} 
    \captionof*{figure}{Block $B_2$}\end{center}  
    \end{minipage}\hspace{0.1\textwidth}
    \begin{minipage}{0.3\textwidth}\begin{center}
    \includegraphics[height=.75in]{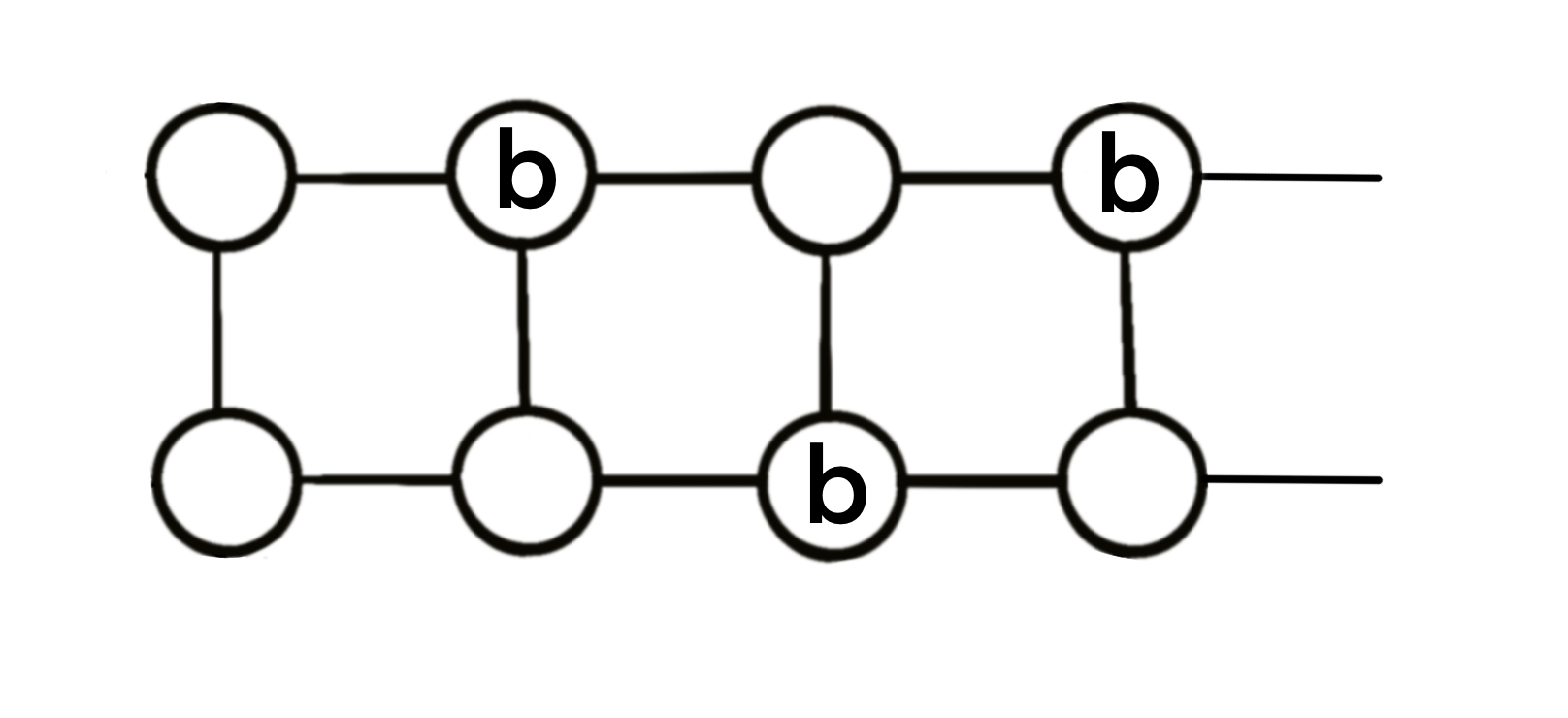} 
    \captionof*{figure}{Block $B_3$}\end{center} 
    \end{minipage}
    \caption{Blocks in the decomposition of $\Pi_n$, $n\geq 6$}
    \label{fig:blocks}
    \end{center}
    \end{figure}

The block decomposition of $\Pi_n$, $n\geq 6$ will play an important role in the proof of Theorem~\ref{thm:mainresult}.  But first, in Lemma~\ref{lem:adjacent2plusand3blocks}, we show that a $B_3$ block cannot be adjacent to either a $B_2$ block or another $B_3$ block.

\begin{lemma}\label{lem:adjacent2plusand3blocks}
    If $c$ is lex-min and not $\left\lceil 2n/3\right\rceil$-bounded, then a $B_3$ block in $\Pi_n$, $n\geq 6$ cannot be adjacent to a $B_2$ block or another $B_3$ block.
\end{lemma}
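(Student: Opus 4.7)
The plan is to derive an immediate contradiction from Lemma~\ref{lem:fourconsecutiverungs}, which forbids 4 blue vertices among any 4 consecutive rungs of $\Pi_n$ in the coloring $c$. Since each $B_i$ block (for $i \in \{1,2,3\}$) spans $i$ consecutive rungs of $\Pi_n$ and contains a blue vertex in every one of those rungs, the argument should reduce to a short rung-counting step once adjacency of blocks is interpreted geometrically.

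First, I would unpack what it means for two blocks to be adjacent in the decomposition. Because the blocks are pairwise disjoint and together cover all rungs of $\Pi_n$, two adjacent blocks must occupy stretches of rungs that are cyclically consecutive in $\Pi_n$, meeting at a pair of cycle edges (one on each copy of $C_n$). Consequently, if a $B_3$ block is adjacent to a $B_2$ block, their union occupies $3+2=5$ consecutive rungs of $\Pi_n$, each of which contains at least one blue vertex; and if a $B_3$ block is adjacent to another $B_3$ block, their union occupies $3+3=6$ such consecutive rungs.

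Next, I would select any 4 consecutive rungs from this stretch. These 4 rungs collectively contain at least 4 blue vertices, one per rung. This directly contradicts Lemma~\ref{lem:fourconsecutiverungs}, and the same extraction handles both the $B_3$--$B_2$ case and the $B_3$--$B_3$ case in a single stroke.

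The only point demanding any care is the translation between adjacency in the block decomposition and rung adjacency in $\Pi_n$; once that translation is in place, the heavy lifting has already been done inside Lemma~\ref{lem:fourconsecutiverungs}, and no further reducible configurations or recoloring arguments are needed here. In this sense the present lemma is essentially a restatement of Lemma~\ref{lem:fourconsecutiverungs} in the language of the block decomposition, which is exactly the form we will need for the subsequent discharging argument.
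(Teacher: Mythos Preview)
Your argument rests on a misreading of the blocks. You take $B_i$ to consist of exactly $i$ consecutive rungs, each containing a blue vertex, so that a $B_3$ next to a $B_2$ would yield five consecutive ``blue'' rungs and Lemma~\ref{lem:fourconsecutiverungs} would fire immediately. But that is not how the blocks are set up. As the block pictures (and, decisively, the charge values listed in the proof of Theorem~\ref{thm:mainresult}) show, each $B_i$ with $i\ge 1$ occupies $i{+}1$ rungs: the $i$ rungs carrying blue vertices \emph{together with one rung that carries no blue vertex}. (You can check this against the stated charges; for instance, a one-rung $B_0$ with the $4$-face to its right has charge $10/3$ only when both neighbouring rungs are blue-free, which forces the adjacent rung of any $B_i$ on the right to be non-blue.) Consequently, a $B_3$ adjacent to a $B_2$ spans seven consecutive rungs arranged as three blue rungs, one non-blue rung, two blue rungs, one non-blue rung (up to orientation). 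In any window of four consecutive rungs here there are at most three blue vertices, so Lemma~\ref{lem:fourconsecutiverungs} is not contradicted, and the same holds for $B_3$ adjacent to $B_3$.

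There is also a sanity check: under your reading the lemma would be vacuous, because maximal blue runs are by definition separated by at least one non-blue rung (a $B_0$), so no two positive blocks could ever be adjacent. The paper would not devote a page-long argument to a tautology. The actual proof has real content: it introduces two new reducible configurations $F_7$ and $F_8$ corresponding to the two adjacency patterns and eliminates each via a careful sequence of list-determinations and recolorings. Your shortcut does not survive the correct definition of the blocks.
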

\begin{proof}
    Suppose not.  Then $\Pi_n$ contains a subgraph of 7 consecutive rungs which looks like one of configurations $F_7$ and $F_8$ in Figure~\ref{fig:noadj2plusand3blocks}.  Vertices $u_3$ and $v_3$ must be non-blue and different colors, so without a loss of generality, we suppose they have colors $r$ and $y$, respectively.  Note further that the blue vertices must be blue and each blank vertex could be any non-blue color which is not already in use by its neighbors.
    \begin{figure}[ht!]
    \centering
    \begin{minipage}{0.4\textwidth}
    \includegraphics[width=\linewidth]{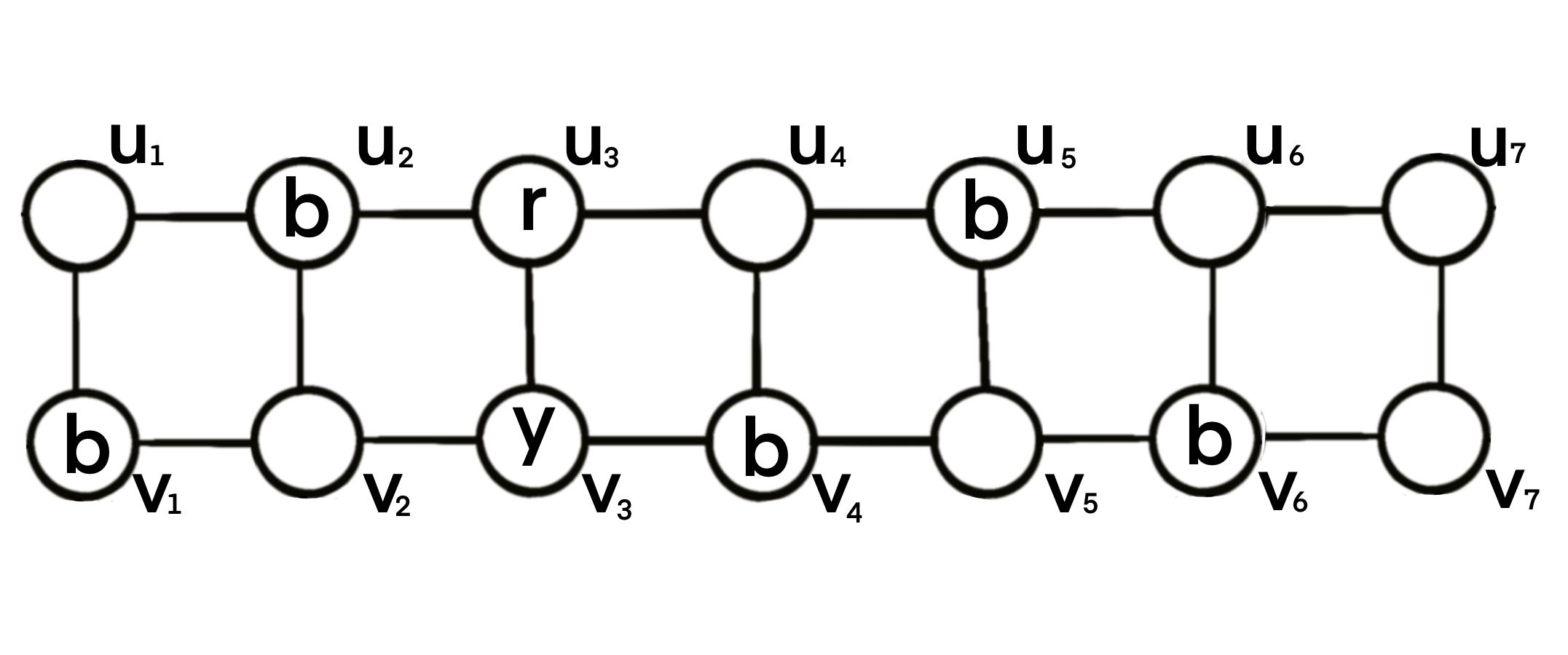}
    \captionof*{figure}{Configuration $F_7$}
    \end{minipage}\hspace{0.05\textwidth}
    \begin{minipage}{0.4\textwidth}
    \includegraphics[width=\linewidth]{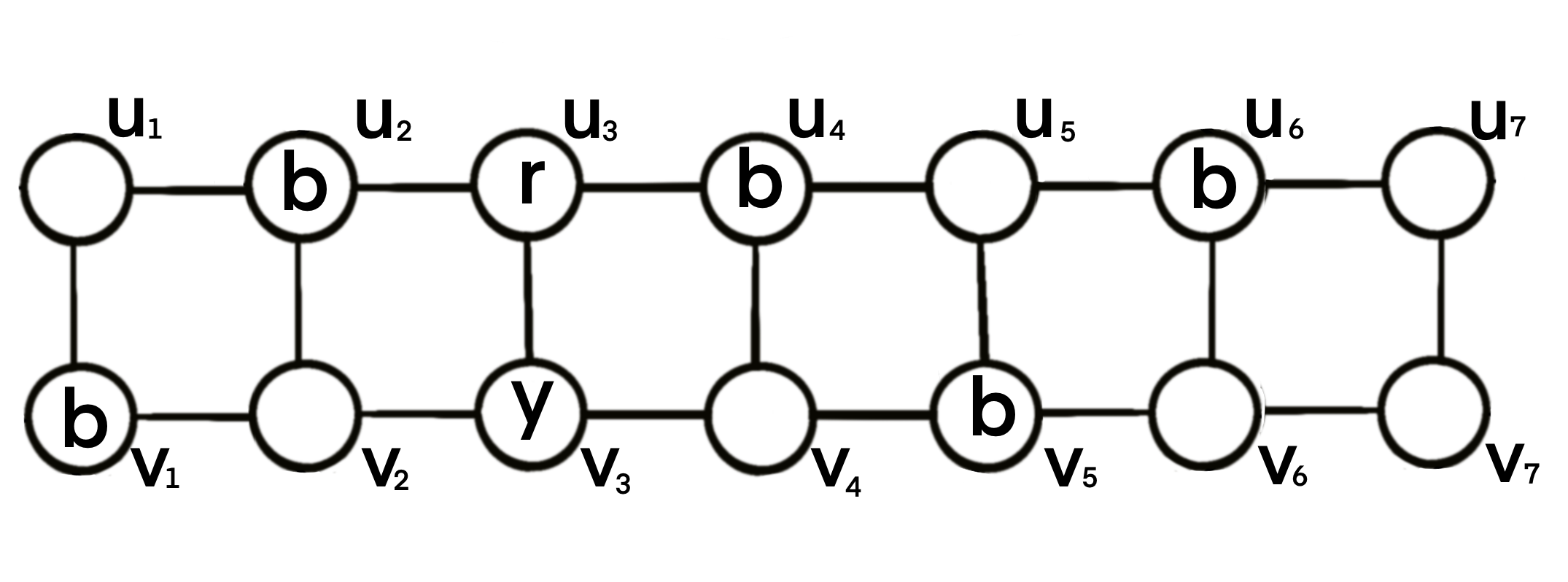}
    \captionof*{figure}{Configuration $F_8$}
    \end{minipage}
    \caption{Reducible configuations for Lemma~\ref{lem:adjacent2plusand3blocks}}
    \label{fig:noadj2plusand3blocks}
    \end{figure}
    
    In configuration $F_7$, we will first determine $L(v_4)$.  Notice that $L(v_4) \subseteq \{b,y,c(u_4),c(v_5)\}$.  If not, we could recolor $v_4$ and obtain a lexicographically smaller color word.  Further, notice that $c(u_4)\ne y$.  If it were, then $L(v_4)=\{b,y,c(v_5)\}$ so we could define a lexicographically smaller coloring $c'$ in which $c'(v_4)=c(v_5)$, $c'(v_5)\not\in\{b,c(v_5)\}$, and $c'=c$ for all other vertices.  For simplicity, say $c(u_4)=g$.  Then $L(v_4)=\{b,g,y\}$.  If not, then there exists a color $c_0$ in $L(v_4)-\{b,g,y\}$.  So we could define a lexicographically smaller coloring $c''$ in which $c''(v_4)=c_0$, $c''(v_5)\not\in\{b,c_0\}$, and $c''=c$ for all other vertices.
    
    Now we will determine $L(u_4)$, $L(u_3)$, and $L(v_3)$.  Notice that $L(u_4)=\{b,g,r\}$.  If not, we could obtain a lexicographically smaller coloring $c'$ in which $c'(v_4)=g$, $c'(u_4)\not\in\{b,g,r\}$, $c'(v_5)\not\in\{b,g\}$, and $c'=c$ for all other vertices.  Next, notice that $L(u_3)=\{b,r,y\}$.  If not, we could obtain a lexicographically smaller coloring $c''$ in which $c''(v_4)=g$, $c''(u_4)=r$, $c''(u_3)\not\in\{b,r,y\}$, $c''(v_5)\not\in\{b,g\}$, and $c''=c$ for all other vertices. Finally, notice that $L(v_3)=\{b,y,c(v_2)\}$.  If not, we could obtain a lexicographically smaller coloring $c'''$ in which $c'''(v_4)=g$, $c'''(u_4)=r$, $c'''(u_3)=y$, $c'''(v_3)\not\in\{b,y,c(v_2)\}$, $c'''(v_5)\not\in\{b,g\}$, and $c'''=c$ for all other vertices.
    
    Since we now know the color lists for $u_3$, $v_3$, $u_4$, and $v_4$, we will use them to obtain a lexicographically smaller coloring $c'$.  If $c(v_2)\ne r$, define $c'$ so that $c'(v_4)=y$, $c'(v_3)=c(v_2)$, $c'(v_2)\not\in\{b,c(v_2)\}$, and $c'=c$ for all other vertices.  If $c(v_2)=r$, define $c'$ so that $c'(v_4)=g$, $c'(u_4)=r$, $c'(u_3)=y$, $c'(v_3)=r$, $c'(v_2)\not\in\{b,r\}$, and $c'=c$ for all other vertices.  In both cases, we contradict the assumption that $c$ is a lex-min coloring.  Thus, configuration $F_7$ cannot occur in coloring $c$.
    
    In configuration $F_8$, we will first determine $L(u_4)$ and $L(u_2)$.  Notice that $L(u_4)\subseteq \{b,r,c(v_4),c(u_5)\}$.  If not, we could recolor $u_4$ and obtain a lexicographically smaller color word.  Futher, notice that $c(v_4)\ne r$.  If it were, then $L(u_4)=\{b,r,c(u_5)\}$ so we could define a lexicographically smaller coloring $c'$ in which $c'(u_4)=c(u_5)$, $c'(u_5)\not\in\{b,c(u_5)\}$, and $c'=c$ for all other vertices.  For simplicity, say $c(v_4)=g$.  Then $L(u_4)=\{b,g,r\}$.  If not, then $L(u_4)=\{b,r,c(u_5)\}$ with $c(u_5)\ne g$.  So we could define a lexicographically smaller coloring $c''$ in which $c''(u_4)=c(u_5)$, $c''(u_5)\not\in\{b,c(u_5)\}$, and $c''=c$ for all other vertices.  
    
    We will now consider three cases: $c(u_5)=r$, $c(u_5)=g$, and $c(u_5)\not\in\{g,r\}$.
    
    \textbf{Case 1}: Suppose that $c(u_5)=r$. We will determine $L(v_4)$, $L(v_3)$, and $L(u_3)$.  Notice that $L(v_4)=\{b,g,y\}$.  If not, we could obtain a lexicographically smaller coloring $c'$ in which $c'(u_4)=g$, $c'(v_4)\not\in\{b,g,y\}$, and $c'=c$ for all other vertices.  Next, notice that $L(v_3)=\{r,y,c(v_2)\}$.  If not, we could obtain a lexicographically smaller coloring $c''$ in which $c''(u_4)=g$, $c''(v_4)=y$, $c''(v_3)\not\in\{r,y,c(v_2)\}$, and $c''=c$ for all other vertices.  Finally, notice that $L(u_3)=\{b,g,r\}$.  If not, we could obtain a lexicographically smaller coloring $c'''$ in which $c'''(u_4)=g$, $c'''(v_4)=y$, $c'''(v_3)=r$, $c'''(u_3)\not\in\{b,g,r\}$, and $c'''=c$ for all other vertices.
    
    Since we now know the color lists for $u_4$, $v_4$, and $v_3$, we will use them to obtain a lexicographically smaller coloring $c'$.  Let $c'(u_4)=g$, $c'(v_4)=y$, $c'(v_3)=c(v_2)$, $c'(v_2)\not\in\{b,c(v_2)\}$, and $c'=c$ for all other vertices.  This contradicts the assumption that $c$ is a lex-min coloring.
    
    \textbf{Case 2}: Suppose $c(u_5)=g$.  We will determine $L(u_3)$ and $L(v_3)$.  Notice that $L(u_3)=\{b,r,y\}$.  If not, we could obtain a lexicographically smaller coloring $c'$ in which $c'(u_4)=r$, $c'(u_3)\not\in\{b,r,y\}$, and $c'=c$ for all other vertices.  Next, notice that $L(v_3)=\{g,y,c(v_2)\}$.  If not, we could obtain a lexicographically smaller coloring $c''$ in which $c''(u_4)=r$, $c''(u_3)=y$, $c''(v_3)\not\in\{g,y,c(v_2)\}$, and $c''=c$ for all other vertices.
    
    Since we now know the color lists for $u_4$, $u_3$, and $v_3$, we will use them to obtain a lexicographically smaller coloring $c'$.  Let $c'(u_4)=r$, $c'(u_3)=y$, $c'(v_3)=c(v_2)$, $c'(v_2)\not\in\{b,c(v_2)\}$, and $c'=c$ for all other vertices.  This contradicts the assumption that $c$ is a lex-min coloring.
    
    \textbf{Case 3}: Suppose $c(u_5)\not\in\{g,r\}$.  We can apply the same argument as Case 2 to obtain a coloring that is lexicographically smaller than $c$.  
    
    In each of the three cases, we contradict the assumption that $c$ is a lex-min coloring.  Thus, configuration $F_8$ cannot occur in coloring $c$. 
\end{proof}

We are now ready to finish proving our main result.  Theorem~\ref{thm:mainresult} argues that every lex-min 3-list-coloring of $\Pi_n$, $n\geq 6$ is $\left\lceil 2n/3\right\rceil$-bounded.  Together with Theorem~\ref{thm:3choose} and Lemmas~\ref{lem:35prisms} and~\ref{lem:4prisms}, this final proof confirms that our main result, Theorem~\ref{thm:mainthm}, is true, and $\Pi_n$, $n\geq 3$, is equitably 3-choosable.

\begin{thm}\label{thm:mainresult}
    If $c$ is lex-min $L$-coloring of $\Pi_n$, $n\geq 6$, then $c$ is $\left\lceil 2n/3\right\rceil$-bounded.
\end{thm}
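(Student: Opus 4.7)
The plan is to derive a contradiction by discharging on the block decomposition of $\Pi_n$. Assume $c$ is lex-min but not $\left\lceil 2n/3\right\rceil$-bounded; then the largest color class Blue satisfies $|\text{Blue}|\geq\left\lceil 2n/3\right\rceil+1>\tfrac{2n}{3}$. By Lemma~\ref{lem:fourconsecutiverungs} and the discussion following it, the rungs of $\Pi_n$ partition cyclically into blocks of types $B_0, B_1, B_2, B_3$, where $B_i$ contributes $i$ blue vertices and $\max(i,1)$ rungs. Maximality of each non-$B_0$ block together with Lemma~\ref{lem:fourconsecutiverungs} forces both cyclic neighbors of every non-$B_0$ block to be of type $B_0$: two consecutive non-$B_0$ blocks would either merge into a single longer blue block or create a run of at least four consecutive blue rungs. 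Lemma~\ref{lem:adjacent2plusand3blocks} further prohibits the patterns in which a single $B_0$ separates two non-$B_0$ blocks of types $\{B_3, B_3\}$ or $\{B_2, B_3\}$.

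I will assign each block the initial charge $\mu(B) = (\text{blue count of } B) - \tfrac{2}{3}(\text{rung count of } B)$, so that $\mu(B_0) = -\tfrac{2}{3}$, $\mu(B_1) = \tfrac{1}{3}$, $\mu(B_2) = \tfrac{2}{3}$, and $\mu(B_3) = 1$. The total initial charge equals $|\text{Blue}| - \tfrac{2n}{3}$, which is strictly positive by assumption. My discharging rule is: each non-$B_0$ block $B_i$ sends $\tfrac{i}{6}$ units of charge to each of its two cyclic neighbors (both of which are $B_0$ by the paragraph above). Every $B_i$ with $i\geq 1$ then ends with final charge $\tfrac{i}{3} - 2\cdot\tfrac{i}{6} = 0$.

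It remains to verify that every $B_0$ also has nonpositive final charge. A $B_0$ with cyclic neighbors of types $B_j$ and $B_k$ receives $s = \tfrac{j}{6} + \tfrac{k}{6}$ units of charge (with a $B_0$ neighbor contributing $0$), so its final charge is $-\tfrac{2}{3} + s$. A short case check over unordered pairs $\{j,k\}\subseteq\{0,1,2,3\}$ yields $s\leq\tfrac{2}{3}$ in every case except $\{j,k\}=\{2,3\}$ (with $s=\tfrac{5}{6}$) and $\{j,k\}=\{3,3\}$ (with $s=1$); and these two are precisely the neighborhoods excluded by Lemma~\ref{lem:adjacent2plusand3blocks}. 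Hence the total final charge is nonpositive, contradicting the strict positivity of the total initial charge, so $c$ must be $\left\lceil 2n/3\right\rceil$-bounded.

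The genuine technical obstacle has already been absorbed into Lemma~\ref{lem:adjacent2plusand3blocks}, whose long reducibility argument rules out exactly the two local patterns that this discharging scheme cannot tolerate. Once that constraint is in hand, the bookkeeping for the main theorem reduces to checking that the discharging rule $B_i \mapsto i/6$ per neighbor is balanced: each non-$B_0$ block donates its entire surplus, and each $B_0$ cannot receive more than $\tfrac{2}{3}$ except in the two forbidden neighborhoods.
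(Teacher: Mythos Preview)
Your argument is correct and noticeably cleaner than the paper's. You use essentially the same decomposition-and-discharging strategy, but with a lighter touch: your blocks are the bare maximal blue runs (lengths $1,2,3$) together with individual non-blue rungs, whereas the paper's $B_i$ (for $i\ge 1$) absorbs one adjacent non-blue rung, so that ``$B_3$ adjacent to $B_2$'' in the paper's sense is exactly your ``$B_3$--$B_0$--$B_2$'' pattern. (That reinterpretation of Lemma~\ref{lem:adjacent2plusand3blocks} is correct, but it is worth stating explicitly, since as literally written the lemma speaks of the paper's blocks.) Your charge $\mu(B)=\text{blue}(B)-\tfrac{2}{3}\,\text{rungs}(B)$ is a direct combinatorial count summing to $|\text{Blue}|-\tfrac{2n}{3}>0$, while the paper builds its charges from planar vertex and face contributions and then regroups them by block; this buys the paper nothing extra here. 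Your single symmetric rule ($B_i$ sends $i/6$ to each cyclic neighbor) replaces the paper's two asymmetric rules, and the case check on $B_0$ neighborhoods is the same in spirit: the only overloaded $B_0$'s are those flanked by $\{B_2,B_3\}$ or $\{B_3,B_3\}$, precisely what Lemma~\ref{lem:adjacent2plusand3blocks} rules out. One small remark: when you invoke ``the discussion following Lemma~\ref{lem:fourconsecutiverungs}'' for your block structure, note that the paper's Figure actually depicts the larger blocks; your decomposition is your own (valid) choice, so it would be cleaner to define it outright rather than cite the paper's.
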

\begin{proof}
    Suppose not.  Define a charge function $chg$ on the vertices and faces of $\Pi_n$, $n\geq 6$, as follows:
    \begin{itemize}
        \item[] For every vertex $v$, $chg(v)=1-|B(v)|$ where $B(v)$ is the set of neighbors of $v$ in $\Pi_n$ which are colored blue under $c$.
        \item[] For every 4-face $f$, $chg(f)=\frac{4}{3}-|B(f)|$ where $B(f)$ is the set of vertices on face $f$ which are colored blue under $c$.
        \item[] For every $n$-face $f$, $chg(f)=0$.
    \end{itemize}
    Let $V$ denote the set of vertices of $\Pi_n$ and $F$ denote the set of faces of $\Pi_n$.  Note that $F$ consists of two $n$-faces and $n$ 4-faces.  Since $|\text{Blue}|>\left\lceil 2n/3\right\rceil$, the total charge of $\Pi_n$ is
    \begin{align*}
        chg(\Pi_n) &= \sum_{f\in F} chg(f) + \sum_{v\in V} chg(v) \\
                  &= 2(0) + \frac{4n}{3} - 2|\text{Blue}| + 2n - 3|\text{Blue}| \\
                  &= 10n/3-5|\text{Blue}| \\
                  &< 10n/3-5\left\lceil 2n/3\right\rceil \\
                  &\leq 10n/3-5(2n/3) \\
                  &=0.
    \end{align*}
    That is, when $c$ is not $\left\lceil 2n/3\right\rceil$-bounded, the total charge of $\Pi_n$ is negative.
    
    Recall that we  can cover $\Pi_n$ with disjoint copies of the blocks $B_0$, $B_1$, $B_2$, and $B_3$ pictured in Figure~\ref{fig:blocks}.  As suggested by the figure, we introduce the convention that each block includes the 4-face to its right, and compute the charge of each block by summing the charges of its vertices and faces.  To begin, the blocks have the following charges:
    \begin{itemize}
        \item[] $chg(B_0)=10/3$ if the block to its left is $B_0$ and $chg(B_0)=7/3$ otherwise.
        \item[] $chg(B_1)=8/3$ if the block to its left is $B_0$ and $chg(B_1)=5/3$ otherwise.
        \item[] $chg(B_2)=1$ if the block to its left is $B_0$ and $chg(B_2)=0$ otherwise.
        \item[] $chg(B_3)=-2/3$ if the block to its left is $B_0$ and $chg(B_3)=-5/3$ otherwise.
    \end{itemize}
    We introduce the following discharging rules:
    \begin{itemize}
        \item[] \textbf{Rule 1}: Every $B_0$ steals $+1$ charge from the block to its right.
        \item[] \textbf{Rule 2}: Every $B_3$ steals $+5/3$ charge from the block to its right.
    \end{itemize}
    After applying Rule 1, blocks of each type all have the same charge.  In particular, $chg(B_0)=10/3$, $chg(B_1)=5/3$, $chg(B_2)=0$, and $chg(B_3)=-5/3$.  After applying Rule 2, $chg(B_3)=0$ for every $B_3$ block.  Further, by Lemma~\ref{lem:adjacent2plusand3blocks}, we know that the block to the right of any $B_3$ cannot be a $B_2$ or a $B_3$.  Thus, we end with the following charges:
    \begin{itemize}
        \item[] $chg(B_0)=5/3$ if the block to its left is $B_3$ and $chg(B_0)=10/3$ otherwise.
        \item[] $chg(B_1)=0$ if the block to its left is $B_3$ and $chg(B_1)=5/3$ otherwise.
        \item[] $chg(B_2)= chg(B_3)=0$ for all $B_2$ and $B_3$ blocks.
    \end{itemize}
    Since $chg(B_i)\geq 0$ for all $i\in\{0,1,2,3\}$, and $chg(\Pi_n)$ is equal to the sum of the charges of the blocks, we may conclude that $chg(\Pi_n)\geq 0$.  This contradicts our earlier conclusion that $chg(\Pi_n)<0$, so it must be that our assumption about $c$ is wrong.  Instead, every lex-min coloring $c$ of $\Pi_n$, $n\geq 6$, must be $\left\lceil 2n/3\right\rceil$-bounded.
\end{proof}


\bibliography{references}

\end{document}